\documentclass[oneside,a4paper,reqno,12pt]{amsart}
\usepackage{amsfonts}
\usepackage{mathrsfs}
\usepackage{amssymb}
\usepackage{amsmath}
\usepackage{latexsym,amsfonts,mathrsfs,amssymb,amsmath}
\usepackage[dvips]{graphicx,color}
\usepackage{ifpdf}
\usepackage[all]{xy}
\usepackage{verbatim}
\usepackage[cmbase]{flexisym}
\usepackage{breqn}

\topmargin=1.0cm

\usepackage[body={14.5true cm,22true cm},
            ]{geometry}


\newcommand{\po}{\partial\Omega_c}
\newcommand{\ua}{\uparrow}
\newcommand{\Ua}{\Uparrow}
\newcommand{\mt}{\mathbb}

\newtheorem{theorem}{Theorem}[section]
\newtheorem{lemma}[theorem]{Lemma}
\newtheorem{proposition}[theorem]{Proposition}
\newtheorem{corollary}[theorem]{Corollary}
\newtheorem{claim}[theorem]{Claim}
\newtheorem{key lemma}[theorem]{\bf Key Lemma}
\newtheorem{sub lemma}[theorem]{\bf Sublemma}

\theoremstyle{definition}
\newtheorem{definition}[theorem]{Definition}
\newtheorem{example}[theorem]{Example}

\newtheorem{conjecture}[theorem]{Conjecture}
\newtheorem{open problem}[theorem]{Open problem}

\theoremstyle{remark}
\newtheorem{remark}[theorem]{Remark}

\newcommand{\Alex}{{\qopname\relax{no}{Alex}}}
\newcommand{\dist}{{\qopname\relax{no}{dist}}}
\newcommand{\Length}{{\qopname\relax{no}{Length}}}

\numberwithin{equation}{theorem}

\begin{document}
\pagestyle{plain}
\title{Nonnegatively Curved Alexandrov Spaces with Souls of
Codimension Two}
\author{Xueping Li}
\date{December 30, 2012}
\thanks{2010 \emph{Mathematics Subject Classification}. Primary 53C20, 53C99; Secondary 57N65.
\newline\indent \emph{Key words and phrases.} Alexandrov space, Soul,
Sharafutdinov retraction, submetry, topologically nice, space of
directions, join.}
\maketitle

\begin{abstract}
In this paper, we study a complete noncompact nonnegatively curved
Alexandrov space $A$ with a soul $S$ of codimension two. We
establish some structural results under additional regularity
assumptions. As an application, we conclude that in this case
Sharafutdinov retraction, $\pi:\ A\to S$, is a submetry.
\end{abstract}

\section*{Introduction}
We begin with the classical Soul Theorem  of Cheeger-Gromoll
(\cite{[CG]}) on complete noncompact Riemannian manifolds of
nonnegative sectional curvature:
\begin{theorem}
Let $M$ be a complete noncompact  Riemannian $n$-manifold with
sectional curvature $sec(M)\geqslant0$. Then $M$ contains a compact
totally geodesic submanifold $S$ (called a soul of $M$) such that
$M$ is diffeomorphic to the normal bundle of $S$.
\end{theorem}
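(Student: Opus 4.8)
The plan is to reproduce the original argument of Cheeger and Gromoll, which breaks into three stages: produce a compact totally convex subset of $M$, contract it through a canonical filtration down to the soul $S$, and finally identify $M$ with the total space of the normal bundle $\nu(S)$. For Stage~1, fix $p\in M$. Since $M$ is complete and noncompact it is proper (Hopf--Rinow) and every point lies on a ray. For a ray $\gamma:[0,\infty)\to M$ set $B_\gamma=\bigcup_{t>0}B_t(\gamma(t))$; because $\sec(M)\geqslant 0$ the Busemann function of $\gamma$ is convex, and from this $B_\gamma$ is open with $M\setminus B_\gamma$ totally convex. Put $C_p=M\setminus\bigcup_{\gamma}B_\gamma$, the union over all rays $\gamma$ issuing from $p$; then $C_p$ is closed, totally convex, and contains $p$. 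The essential point is compactness, which by properness of $M$ reduces to boundedness: if $C_p$ were unbounded, joining $p$ to points $q_i\to\infty$ in $C_p$ by minimizing segments — which stay in $C_p$ by convexity — and passing to a subsequential limit yields a ray $\gamma\subset C_p$ from $p$; but $\gamma(s)\in B_t(\gamma(t))$ once $t>s/2$, so $\gamma(s)\in B_\gamma$, contradicting $C_p\cap B_\gamma=\varnothing$.

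For Stage~2 I would invoke the structure theory of closed totally convex sets: such a set $C$ is an embedded topological manifold-with-boundary whose interior $C\setminus\partial C$ is a smooth, totally geodesic submanifold of $M$, and — again using $\sec\geqslant 0$ — the function $x\mapsto\dist(x,\partial C)$ is concave on $C$. Iterate: with $C^{(0)}=C_p$, as long as $\partial C^{(i)}\neq\varnothing$ let $C^{(i+1)}$ be the (compact, totally convex) subset on which $\dist(\,\cdot\,,\partial C^{(i)})$ attains its maximum; a dimension count shows $\dim C^{(i+1)}<\dim C^{(i)}$ whenever $\partial C^{(i)}\neq\varnothing$. After at most $\dim M$ steps the process halts at a compact totally convex set $S$ with $\partial S=\varnothing$, which by the structure theory is a compact totally geodesic (hence smooth) submanifold of $M$ — a soul.

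For Stage~3 it remains to show $\exp^{\perp}:\nu(S)\to M$ is a diffeomorphism. The distance-nonincreasing Sharafutdinov retraction $\pi:M\to S$ is a deformation retraction, so $M$ carries no topology away from $S$; in particular $M$ is homotopy equivalent to $S$. To upgrade this to a diffeomorphism I would use a gradient-like vector field for $r=\dist(\,\cdot\,,S)$, which has no critical points on $M\setminus S$, together with the fact that for small $c>0$ the level set $\{r=c\}$ is the smooth hypersurface $\exp^{\perp}(\{|v|=c\})$; flowing these level hypersurfaces outward exhausts $M\setminus S$ and matches with $\exp^\perp$ on a collar of $S$, exhibiting $M$ as diffeomorphic to $\nu(S)$.

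The main obstacle is Stage~2: the structure theory of totally convex sets — that they are embedded manifolds-with-boundary with smooth totally geodesic interior, and that the dimension strictly drops on the maximum set of $\dist(\,\cdot\,,\partial C)$ when $\partial C\neq\varnothing$. The convexity of Busemann functions and the concavity of the distance-to-boundary that drive everything rest on second-variation (Jacobi field) comparison, which is precisely where $\sec(M)\geqslant 0$ enters essentially. The secondary difficulty is Stage~3's passage from a homotopy equivalence to an honest diffeomorphism, i.e.\ controlling $\exp^{\perp}$ globally rather than merely near the zero section.
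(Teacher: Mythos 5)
This theorem is quoted by the paper from Cheeger--Gromoll \cite{[CG]} as classical background; the paper offers no proof of it, so there is no internal argument to compare against. Your Stages 1 and 2 are a correct sketch of the classical construction: the half-space complements $M\setminus B_\gamma$ are totally convex because $\sec\geq 0$ makes the Busemann functions convex, the set $C_p$ is compact by exactly the ray-extraction argument you give, and the iteration of maximum sets of $\dist(\,\cdot\,,\partial C^{(i)})$ terminates in a compact boundaryless totally convex set by the dimension-drop lemma. All of this is right, granting the (nontrivial but standard) structure theory of totally convex sets that you explicitly flag as the input.

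Stage 3 has a genuine gap. You set yourself the goal of proving that $\exp^{\perp}:\nu(S)\to M$ is a diffeomorphism, but that is strictly stronger than the theorem and is \emph{not} part of the 1972 argument: that the normal exponential map is a global diffeomorphism (equivalently, that $S$ has no focal points and distinct normal geodesics never meet) is a consequence of Perelman's 1994 flat-strip theorem --- Theorem 0.2 of this paper --- and remained open for two decades after \cite{[CG]}. Your sketch does not close this: away from a tubular neighborhood of $S$ the function $r=\dist(\,\cdot\,,S)$ is only Lipschitz, its level sets need not be smooth hypersurfaces, and there is no a priori reason they coincide with $\exp^{\perp}(\{|v|=c\})$; you would also need $r$ to have no critical points on $M\setminus S$, which you do not establish. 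The classical proof avoids $\exp^{\perp}$ entirely: it exhausts $M$ by the compact totally convex superlevel sets of the Busemann function, shows (after smoothing boundaries) that each is diffeomorphic to the normal disk bundle of $S$ --- a step whose details were supplied only later by Poor and by Sharafutdinov --- and assembles the diffeomorphism $M\cong\nu(S)$ from that exhaustion. Either restrict Stage 3 to that route, or explicitly invoke Perelman's theorem for the statement about $\exp^{\perp}$.
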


When $sec(M)>0$, Gromoll and Meyer (\cite{[GMe]}) earlier showed
that a soul is a point, and thus $M$ is diffeomorphic to $\mathbb
R^n$. Cheeger and Gromoll proposed  the following so called Soul
Conjecture: If a complete noncompact nonnegatively curved Riemannian
manifold has strictly positive sectional curvature around a point,
then a soul is a point.

In 1994, Perelman (\cite{[Per3]}) proved the following theorem which
implies the Soul Conjecture:

\begin{theorem}
Let $M$ be a complete noncompact  Riemannian $n$-manifold with
$sec(M)\geqslant0$, and let  $S$ be a soul. If $P:M \to S$ is a
distance nonincreasing map, then the following properties hold:
\newline (0.2.1)  For any
$x\in S$ and any unit vector $v$ at $x$ normal to $S$,
$P(\exp_{x}(tv))=x, \text{ for all } t\geqslant0.$
\newline (0.2.2) Let $\gamma:[0,l]\to S$ be a geodesic, and let $V(s)$
denote the parallel vector field along $\gamma(s)$ with $V(0)=v$.
Then $\sigma_{s} (t)=\exp_{\gamma(s)}(tV(s))$ are geodesics filling
a flat totally geodesic strip $(t\geqslant0)$. If $\gamma([0,l])$ is
minimal, then $\sigma_{s}(t_0)|_{[0,l]}$ are minimal for any fixed
$t_0$.
\newline (0.2.3) $P$ is a  $C^1$-Riemannian submersion such that
the eigenvalue of the second fundamental form of $P$-fibers are
bounded above by the inverse of the injectivity radius of $S$ (in
the barrier sense).
\end{theorem}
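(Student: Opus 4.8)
The plan is to derive all three assertions from one rigidity phenomenon: in nonnegative curvature the Jacobi fields transported along the rays normal to $S$ can only contract, whereas the hypothesis that $P$ is $1$-Lipschitz forbids them to contract in the directions coming from $S$; where both constraints are active the fields have constant norm, and the equality case of the focal (Rauch) comparison along a ray then produces the flat, totally geodesic strips. I carry this out in three steps matching (0.2.1)--(0.2.3), and I take $P$ to be the Sharafutdinov retraction.

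\emph{Step 1, (0.2.1).} I use the soul structure: for $x\in S$ and a unit $v\perp S$ the geodesic $c(t)=\exp_{x}(tv)$ is a ray realizing the distance to $S$, with $x$ its unique nearest point in $S$; the normal exponential map $\exp^{\perp}\colon\nu S\to M$ is a diffeomorphism; and every minimal geodesic of $M$ with endpoints in $S$ lies in $S$. The Sharafutdinov retraction is built from nearest-point projections onto a nested family of totally convex sets, so it moves each point inward without overshooting, $d(q,P(q))=d(q,S)$ for all $q$; applied to $q=c(t)$, whose unique nearest point in $S$ is $x$, this gives $P(c(t))=x$ for all $t\ge 0$.

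\emph{Step 2, (0.2.2) --- the heart of the proof.} Let $\gamma\colon[0,l]\to S$ be unit speed, $v\perp S$ a unit vector at $\gamma(0)$, and $V$ the parallel field of $M$ along $\gamma$ with $V(0)=v$; since $S$ is totally geodesic, $V$ stays normal to $S$. For fixed $s$ put $\sigma_{s}(t)=\exp_{\gamma(s)}(tV(s))$, a ray normal to $S$ (Step 1), and let $J_{s}$ be the Jacobi field along $\sigma_{s}$ obtained by differentiating the family in $s$, so $J_{s}(0)=\gamma'(s)$ and $\nabla_{t}J_{s}(0)=\nabla_{s}V=0$. From $J_{s}(0)\perp\sigma_{s}'(0)$ and $\nabla_{t}J_{s}(0)=0$ one gets $\langle J_{s},\sigma_{s}'\rangle\equiv 0$, so nonnegativity of $\langle R(J_{s},\sigma_{s}')\sigma_{s}',J_{s}\rangle$ gives the focal comparison bound $|J_{s}(t)|\le|J_{s}(0)|=1$ for all $t\ge 0$. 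On the other hand, by Step 1 the retraction $P$ carries the curve $s\mapsto\sigma_{s}(t)$ onto $s\mapsto\gamma(s)$; as $P$ is $1$-Lipschitz and $\gamma|_{[0,l]}$ has length $l$, this forces $l\le\int_{0}^{l}|J_{s}(t)|\,ds\le l$, hence $|J_{s}(t)|\equiv 1$ for all $s$ and all $t\ge 0$. Thus along each ray $\sigma_{s}$ there is a Jacobi field of constant norm, and the rigidity (equality) case of the focal comparison along a ray forces $\nabla_{t}J_{s}\equiv 0$ and $R(J_{s},\sigma_{s}')\sigma_{s}'\equiv 0$. Therefore $(s,t)\mapsto\sigma_{s}(t)$ is an isometric parametrization of a flat strip which, the curves $\sigma_{s}$ being $M$-geodesics and $\nabla_{t}J_{s}$ vanishing, is totally geodesic in $M$. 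Finally, if $\gamma|_{[0,l]}$ is minimal then $d(\gamma(0),\gamma(l))=l$, so $1$-Lipschitzness gives $d(\sigma_{0}(t_{0}),\sigma_{l}(t_{0}))\ge l$, while this distance is at most $\int_{0}^{l}|J_{s}(t_{0})|\,ds=l$; equality shows $s\mapsto\sigma_{s}(t_{0})$ is a minimal geodesic. I expect this step to be the main obstacle: the passage from ``a Jacobi field of constant norm along a ray in nonnegative curvature'' to ``parallel, with vanishing curvature in the corresponding plane'' is exactly where nonnegativity of the curvature and the minimizing property of the normal rays are both indispensable, and it is the infinitesimal form of the Soul Conjecture itself.

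\emph{Step 3, (0.2.3).} Given $q\in M$, write $q=\sigma_{0}(t_{0})$ for the strip of Step 2 attached to a geodesic $\gamma$ in $S$ with $\gamma(0)=x=P(q)$ and $V(0)=v$, $q=\exp_{x}(t_{0}v)$. Letting $\gamma$ range over all geodesics through $x$, the vectors $\partial_{s}\sigma_{s}(t_{0})|_{s=0}$ span a $(\dim S)$-dimensional subspace $\mathcal{H}_{q}\subset T_{q}M$; by construction $dP_{q}$ maps $\mathcal{H}_{q}$ onto $T_{x}S$, and isometrically because $|J_{s}(t_{0})|\equiv|J_{s}(0)|$. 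Hence $\mathcal{H}_{q}$ is a complement of $\ker dP_{q}$ on which $dP_{q}$ is norm-preserving, and since the strips depend continuously on $(q,\gamma)$ the assignment $q\mapsto\mathcal{H}_{q}$ is continuous --- which is precisely the assertion that the fibers $P^{-1}(x)$ are $C^{1}$ submanifolds and $P$ is a $C^{1}$ Riemannian submersion with $\mathcal{H}$ as horizontal distribution. For the second fundamental form of a fiber $F=P^{-1}(x)$ at $q$: the radial curve $t\mapsto\exp_{x}(tv)$ lies in $F$ and is an $M$-geodesic, so the form vanishes in the radial direction; in a transverse direction a value exceeding $1/\mathrm{inj}(S)$ would force the corresponding $M$-geodesic to meet, before $F$ closes up, a focal point of $S$ along a normal ray lying over a geodesic of $S$ of length less than $2\,\mathrm{inj}(S)$, contradicting the existence (Step 2) of a flat, totally geodesic --- hence focal-point-free --- strip over every such geodesic. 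Since $F$ is only $C^{1,1}$, the bound is read in the barrier (support) sense, as stated.
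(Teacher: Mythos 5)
First, a framing remark: the paper does not prove Theorem 0.2 at all — it is quoted from Perelman's 1994 paper [Per3] — so there is no internal proof to compare against, and your proposal must stand on its own. It does not: there is a genuine gap, and it sits exactly where the difficulty of the theorem lies, namely in Step 1.

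You derive (0.2.1) from three inputs: that $c(t)=\exp_x(tv)$ is a ray realizing $d(\cdot,S)$ with $x$ as its unique foot point, that $\exp^{\perp}\colon\nu S\to M$ is a diffeomorphism, and that the Sharafutdinov retraction satisfies $d(q,P(q))=d(q,S)$. None of these is available a priori. The Sharafutdinov retraction is obtained by following gradient-like curves of the Busemann function and of the distance functions to $\partial C_i$; it is not the nearest-point projection onto $S$, and the identity $d(q,P(q))=d(q,S)$ has no justification. More seriously, the statements that normal geodesics to $S$ minimize the distance to $S$ for all time, that their foot points are unique, and that $\exp^{\perp}$ is a diffeomorphism are \emph{consequences} of (0.2.1)--(0.2.2), not inputs: Cheeger--Gromoll produce a diffeomorphism $M\cong\nu S$, but not via $\exp^{\perp}$, and the inability to control normal geodesics far from $S$ is precisely why the Soul Conjecture stood open for twenty years. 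The same circularity reappears in Step 2: the Berger (Rauch II) estimate $|J_s(t)|\le|J_s(0)|$ is valid only up to the first focal point of $S$ along $\sigma_s$, and the absence of focal points for all $t\ge 0$ is again part of what is being proved.

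Perelman's actual argument repairs both gaps simultaneously by a bootstrap in $t$: let $(A_t)$ denote (0.2.1) for parameters $\le t$ and $(B_t)$ the flat-strip statement (0.2.2) up to height $t$, and show that the set of $t$ for which both hold is nonempty, closed, and open in $[0,\infty)$. Your Step 2 is essentially the correct proof of the implication $(A_t)\Rightarrow(B_t)$ (the $1$-Lipschitz lower bound against the Berger upper bound, then the equality case of the comparison), and on the interval where $(B_t)$ already holds there are no focal points, so the comparison is legitimate there. What is entirely missing is the openness step $(B_t)\Rightarrow(A_{t+\varepsilon})$ — a quadrilateral comparison over the flat strips showing that $P(\exp_x(t'v))$ cannot move off $x$ for $t'$ slightly beyond $t$ — and that is where the real work of the theorem is. (Step 3 is also only a sketch: the $C^1$ regularity of the horizontal distribution and the barrier-sense bound on the second fundamental form require the focal-radius argument of [Per3]; but the decisive gap is the one above.)
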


Note that (0.2.2) implies that if $S$ is not a point, then any point
in $M$ is on some flat totally geodesic strip, and thus the Soul
Conjecture.

Note that it was shown independently by Cao-Shaw (\cite{[CS]}) and
Wilking (\cite{[Wi]}) that $P\ :M\to S$ is actually smooth (cf.
\cite{[Gu]}).

In this paper, we are concerned with analogue of Theorem 0.2 in
Alexandrov geometry. An Alexandrov space is a complete length space
on which Toponogov's triangle comparison holds. The study of
Alexandrov spaces was initiated by Burago-Gromov-Perelman
(\cite{[BGP]}), partially motivated by the fact that the
Gromov-Hausdorff limit of a sequence of Riemannian manifolds with
sectional curvature bounded from below uniformly is an Alexandrov
space which  in general may have both geometric and topological
singularities.

In [24, 6.3], Perelman extended Theorem 0.1 to Alexandrov spaces:

\begin{theorem}
Let $A$  be a complete noncompact nonnegatively curved Alexandrov
space. Then there is a compact convex subset $S$ without boundary
and a distance nonincreasing deformation retraction $\pi:A\to S$.
\end{theorem}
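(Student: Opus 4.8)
The plan is to carry out the Cheeger--Gromoll soul construction inside the Alexandrov category and to build the retraction from Perelman's gradient flow of semiconcave functions. \emph{Step 1 (Busemann functions and the first convex set).} Fix $p\in A$ and let $\Gamma_p$ be the nonempty set of rays issuing from $p$. For $\gamma\in\Gamma_p$ set $b_\gamma(x)=\lim_{t\to\infty}\big(\dist(x,\gamma(t))-t\big)$; the limit exists because $t\mapsto\dist(x,\gamma(t))-t$ is nonincreasing and bounded below by $-\dist(p,x)$, and $b_\gamma$ is $1$-Lipschitz with $b_\gamma(p)=0$. The first point is that nonnegative curvature forces each $b_\gamma$ to be \emph{concave}: applying Toponogov comparison to the hinge based at $\gamma(t)$ and letting $t\to\infty$ kills the Euclidean second--order term, so $(b_\gamma\circ\sigma)''\le 0$ in the barrier sense along every unit--speed geodesic $\sigma$. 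Hence each superlevel set $\{b_\gamma\ge c\}$ is totally convex, and one checks $\{b_\gamma<0\}=\bigcup_{t>0}B(\gamma(t),t)$. Put $C_1:=\bigcap_{\gamma\in\Gamma_p}\{b_\gamma\ge 0\}$: as an intersection of closed totally convex sets it is closed, totally convex and contains $p$; and it is \emph{compact}, for otherwise one chooses $q_i\in C_1$ with $\dist(p,q_i)\to\infty$ and extracts from the segments $\overline{pq_i}\subset C_1$ a ray $\sigma\subset C_1$ from $p$, which is impossible since $\sigma(s)\in B(\sigma(2s),2s)\subset\{b_\sigma<0\}$ for every $s>0$. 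Being totally convex in $A$, $C_1$ is itself a compact Alexandrov space of nonnegative curvature (possibly with boundary), and its intrinsic metric coincides with the one induced from $A$.

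\emph{Step 2 (the iteration).} If $\partial C_1=\varnothing$, put $S:=C_1$. Otherwise, on any compact totally convex $C$ with $\partial C\ne\varnothing$ the function $\rho_C:=\dist(\cdot,\partial C)$ is concave (again Toponogov comparison, reflecting that $\partial C$ is concave as seen from inside when curvature is $\ge 0$); with $a^*=\max_C\rho_C$ the set $C':=\{\rho_C=a^*\}=\{\rho_C\ge a^*\}$ is compact and totally convex as a superlevel set of a concave function. The essential point is the strict drop $\dim C'<\dim C$: $C'$ is proper because it misses $\partial C$, and if it were of full dimension one takes $x_0$ in the relative boundary of $C'$ in $C$, observes that $C$ and $C'$ then have the same tangent cone at $x_0$, and gets a contradiction from the fact that $\rho_C$ strictly decreases from $x_0$ in some direction while $x_0$ is a maximum of the concave function $\rho_C$ (the critical set of a concave function being exactly its maximum set). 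Iterating with $C$ replaced by $C'$, the dimension drops each time, so after finitely many steps one reaches a compact totally convex $S$ with $\partial S=\varnothing$, which is the soul. Making this dimension drop rigorous in the presence of metric singularities, where the smooth second--variation argument of Cheeger--Gromoll does not transfer verbatim, is the first place that genuinely needs care, and it rests on the infinitesimal structure of totally convex sets near a boundary point of their ``maximal set''.

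\emph{Step 3 (the retraction).} At each stage there is a concave function whose maximum set is exactly the next stage: on $A$ take $h_0:=\min\{\,0,\ \inf_{\gamma\in\Gamma_p}b_\gamma\,\}$, an infimum of concave functions capped at $0$, which is $1$-Lipschitz with $\{h_0=0\}=C_1$ and has compact superlevel sets (by the same ray argument as in Step 1); on $C_i$ (while $\partial C_i\ne\varnothing$) take $\rho_{C_i}$, whose maximum set is $C_{i+1}$. Perelman's gradient flow $\Phi^t$ of such a function is defined for all $t\ge 0$; because the function is \emph{concave} the flow is distance nonincreasing, it fixes the maximum set pointwise (there the gradient vanishes), and $\Phi^t(x)$ converges as $t\to\infty$ to a point of the maximum set. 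Applying this to $h_0$, reparametrizing $[0,\infty)$ onto $[0,1)$ and adjoining the limit map at $1$, yields a distance nonincreasing deformation retraction $A\to C_1$ fixing $C_1$ (hence $S$) throughout; applying it to each $\rho_{C_i}$ inside $C_i$ (where intrinsic and ambient metrics agree) yields one of $C_i$ onto $C_{i+1}$. Concatenating these finitely many homotopies and reparametrizing the time interval gives the required distance nonincreasing deformation retraction $\pi\colon A\to S$. Beyond the dimension drop of Step 2, the other delicate point is Alexandrov--geometric bookkeeping: checking that Perelman's and Petrunin's gradient--flow machinery really applies to the merely Lipschitz concave functions $h_0$ and $\rho_{C_i}$ — in particular that the resulting retractions are continuous and distance nonincreasing — whereas the concavity statements used in Steps 1 and 2 are routine triangle comparison.
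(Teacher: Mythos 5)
This is Perelman's theorem, which the paper itself does not prove but cites (it only recalls the construction in Section 1.1), and your proposal follows essentially that same standard route: Busemann superlevel sets, iterated maximum sets of $\dist(\cdot,\partial C)$, and the gradient flow of the successive concave functions. The only substantive differences are cosmetic: the paper works with the single concave function $f(x)=\lim_{t\to\infty}(|x,S(p,t)|-t)$ and takes its maximum set $C_0$, and it reparametrizes the gradient flow by $\alpha^+(t)=\nabla_{\alpha(t)}f/|\nabla_{\alpha(t)}f|^2$ so that the flow reaches $C_0$ in the finite time $a_0-f(x)$ (the distance-nonincreasing property then gives the continuous extension to $C_0$), whereas you intersect the ray Busemann superlevel sets and take a $t\to\infty$ limit of the unreparametrized flow, which requires a separate convergence argument.

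One detail in your Step 2 does not work as written. If $C'$ has full dimension and $x_0$ lies in the relative boundary of $C'$ in $C$, the tangent cones of $C$ and $C'$ at $x_0$ need not coincide (think of a half-plane inside the plane), and in any case the existence of a direction of strict decrease of $\rho_C$ at $x_0$ is perfectly compatible with $x_0$ being a maximum point, so no contradiction results. The standard fix is to take $x_0$ in the interior of $C'$ relative to $C$, which is nonempty because a full-dimensional convex subset has nonempty interior: then $\rho_C$ would be locally constant near $x_0$, contradicting that it decreases at unit speed along a shortest segment from $x_0$ to $\partial C$. With that repair, and with the gradient-flow facts you already flag as needing verification, the argument is sound.
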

The map $\pi$ is called the Sharafutdinov retraction of $A$. In
further study following Theorem 0.3, there are two basic questions:
\begin{open problem}

(0.4.1) (\cite{[Per3]}) 
Soul Conjecture: If a complete noncompact nonnegatively curved
Alexandrov space has strictly positive curvature around a point,
then a soul is a point.

\noindent (0.4.2) ([1, 18.5]) Is the Sharafutdinov retraction $\pi$
a submetry? (A submetry is a map which preserves all $r$-balls, and
thus  submetry is a metric analogue of Riemannian submersion.)
\end{open problem}

An affirmative answer to (0.4.2) will easily imply (0.4.1) (see the
proof of Corollary 0.6), but the converse may not be true.

(In the following context, when we say a minimal geodesic from one
point to a subset, we always mean one whose length realizes the
distance from the point to the subset.)

In [42, 2.1], Yamaguchi partially generalized (0.2.2):

\begin{theorem}
Let $C$ be a convex closed subset in $A\in \Alex^n(0)$, with
boundary $\partial C\neq \emptyset$, let $f=\dist_{\partial C}$ and
let $\gamma(t)\subset C$ ($t\in [0,b]$) be a minimal geodesic with
$\gamma(0)=p,\ \gamma(b)=q$, such that $f(\gamma(t))=const$. Then
for any minimal geodesic $\gamma_0$ from $p$ to $\partial C$, with
$\angle(\gamma_0^+(0),\gamma^+(0))=\frac\pi2$, there is a minimal
geodesic $\gamma_1$ from $q$ to $\partial C$, such that $\{\gamma,
\gamma_0,\gamma_1\}$  bounds a flat totally geodesic rectangle.
\end{theorem}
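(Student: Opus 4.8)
The statement I want to prove is Theorem 0.5 (Yamaguchi's generalization of Perelman's flat strip construction to Alexandrov spaces): given a convex closed $C\subset A\in\Alex^n(0)$ with $\partial C\neq\emptyset$, $f=\dist_{\partial C}$, a minimal geodesic $\gamma\subset C$ along a level set $\{f=\mathrm{const}\}$ from $p$ to $q$, and a minimal geodesic $\gamma_0$ from $p$ to $\partial C$ meeting $\gamma$ orthogonally at $p$, I must produce a minimal geodesic $\gamma_1$ from $q$ to $\partial C$ so that $\{\gamma,\gamma_0,\gamma_1\}$ bounds a flat totally geodesic rectangle. The plan is to follow Perelman's original strategy, replacing Jacobi-field/second-variation arguments with Toponogov comparison and the first variation formula, which is what is available in $\Alex^n(0)$.

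**Main steps.** First I would record the key geometric input: because $C$ is convex and $f=\dist_{\partial C}$ is concave on $C$ (distance to the boundary of a convex set in nonnegative curvature is concave), $f\circ\gamma$ being constant forces $\gamma$ to be a geodesic along which the concave function $f$ achieves no strict concavity — this rigidity is what will ultimately flatten things out. Next, let $\ell=f(p)=f(q)$ be the common value and let $c=\gamma_0(\ell)\in\partial C$ be the endpoint of $\gamma_0$. I would consider the function $t\mapsto \dist(\gamma(t),c)$ along $\gamma$; by the first variation formula at $t=0$ its derivative is $-\cos\angle(\gamma_0^+(0),\gamma^+(0))=-\cos\frac\pi2=0$, while convexity/comparison arguments (Toponogov applied to triangles with vertices $\gamma(t)$, $c$, and $\partial C$) give that $\dist(\gamma(t),c)\le \ell$ for all $t$, with equality forcing flatness. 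Then I would show $\dist(q,c)=\ell$ as well — i.e. the geodesic $\gamma_1'$ from $q$ to $c$ is minimal to $\partial C$ — by combining: (i) $\dist(q,\partial C)=f(q)=\ell$ since $\gamma$ lies in the level set, and (ii) $\dist(q,c)\le\ell$ from the comparison estimate. Having $\dist(q,c)=\ell=\dist(p,c)=\dist(q,\partial C)=\dist(p,\partial C)$, the quadrilateral $p,q$ on one side and the foot points on $\partial C$ on the other forms a configuration where all four comparison triangles are degenerate, so by the rigidity case of Toponogov's theorem (globalization / the splitting-type rigidity in Alexandrov spaces — e.g. the fact that equality in the angle comparison forces a flat totally geodesic surface) the region swept out is a flat totally geodesic rectangle. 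In particular the two sides from $p$ and from $q$ to $\partial C$ are parallel, giving $\gamma_1$ as the side from $q$; and the angle at $q$ is automatically $\frac\pi2$ by the rectangle structure.

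**The main obstacle.** The hard part is the passage from "all comparison triangles/hinges are degenerate" to "there is an honest flat totally geodesic rectangle isometrically embedded." In the Riemannian setting Perelman gets this from parallel transport and second-variation rigidity; in $\Alex^n(0)$ one must instead invoke the Alexandrov rigidity lemma: if a geodesic triangle has all three angles equal to their comparison angles, or equivalently if a hinge spans a region of zero excess, then it bounds a flat convex surface (this is the analogue of Toponogov's rigidity, cf. the flat-strip/flat-triangle rigidity used throughout Perelman's and Yamaguchi's work). Carefully checking that the four "thin triangle" conditions at the corners glue to a genuine totally geodesic flat rectangle — rather than just a union of flat triangles with possibly a break along a diagonal — requires showing the two diagonals have the comparison-predicted length simultaneously, which follows once we know both $\dist(p, c)=\ell$ and $\dist(q,c')=\ell$ for the respective feet $c,c'$ and that $\dist(c,c')=\dist(p,q)=b$; this last equality comes from projecting $\gamma$ to $\partial C$ via the (distance-nonincreasing) nearest-point behavior and noting lengths cannot drop because $f$ is constant along $\gamma$. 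A secondary technical point is that minimal geodesics to $\partial C$ and the feet $c$, $c'$ need not be unique; one selects $\gamma_0$ as given and then produces $\gamma_1$ as a limit/selection consistent with the rectangle, using that the set of such minimal geodesics varies upper-semicontinuously.
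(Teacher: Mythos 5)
There is a genuine error at the core of your argument, and it is worth noting first that the paper itself does not prove this statement: Theorem 0.5 is quoted from Yamaguchi [42, 2.1], so your proposal has to stand on its own. The fatal step is the claim that ``convexity/comparison arguments give $\dist(\gamma(t),c)\le\ell$ for all $t$,'' followed by the conclusion $\dist(q,c)=\ell$. Both are false. Since $c\in\partial C$ and $f(\gamma(t))=\ell$ for all $t$, one trivially has $\dist(\gamma(t),c)\ge\dist(\gamma(t),\partial C)=\ell$; and in the flat rectangle you are trying to build, $\dist(\gamma(t),c)=\sqrt{t^2+\ell^2}>\ell$ for $t>0$. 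Worse, $\dist(q,c)=\ell$ is geometrically impossible when $b>0$: if it held, $[qc]$ would be a minimal geodesic from $q$ to $\partial C$, hence (by first variation applied to $f\circ\gamma\equiv\ell$) orthogonal to $[qp]$ at $q$, and the hinge comparison at $q$ would give $\ell=|pc|\ge\sqrt{|pq|^2+|qc|^2-2|pq||qc|\cos\tilde\angle}$ with $\tilde\angle\le\pi/2$ forcing $|pc|^2\le b^2+\ell^2$ but the rigidity direction forcing $|pc|^2\ge b^2+\ell^2-o(1)$ type contradictions; more simply, the foot of the perpendicular from $q$ must be a \emph{different} point $c'\in\partial C$ with $|cc'|=b$, not $c$ itself. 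You appear to have conflated the two feet, and the later appearance of $c'$ in your text does not repair the logic, because everything downstream is derived from the false identity $\dist(q,c)=\ell$.

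The real content of the theorem is precisely the \emph{lower} bound $\dist(\gamma(t),c)\ge\sqrt{t^2+\ell^2}$ (the upper bound follows at once from the hinge comparison at $p$ with angle $\pi/2$). That lower bound cannot come from the triangle inequality or from the angle condition at $p$ alone; it must use the concavity of $f=\dist_{\partial C}$ on the convex set $C$ together with the hypothesis that $f$ is constant along \emph{all} of $\gamma$, and it is the step where Yamaguchi's proof does its work. Your final paragraph correctly identifies the filling rigidity needed at the end (equality in comparison forces a flat totally geodesic surface, as in Lemma 2.6 of the paper), but without a correct argument for the distance rigidity $\dist(q,c)=\sqrt{b^2+\ell^2}$ and $\dist(c,c')=b$, there is no equality case to which that rigidity can be applied. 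As written, the proposal does not prove the theorem.
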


As seen earlier that in Riemannian case, (0.2.2) implies Soul
Conjecture. In comparison, a gap between Theorem 0.5 and (0.4.1) is
that for a complete noncompact nonnegatively curved Alexandrov
space, there may be points where no flat totally geodesic rectangle
obtained in Theorem 0.5 passing through. However in the case that
$\text{codim}(S)=1$ (cf. \cite{[SY]}), Theorem 0.5 implies an
affirmative answer to (0.4.2).




In this paper, we will investigate the structure of a complete
noncompact nonnegatively curved Alexandrov space which is
topologically nice and whose a soul has codimension $2$. A point in
an Alexandrov space is called 
topologically nice if the iterated spaces of directions are all
homeomorphic to spheres. An Alexandrov space is called topologically
nice if all points on it are topologically nice. The limit space of
a sequence of noncollapsed Riemannian manifolds with sectional
curvature bounded from below uniformly is topologically nice.

We now begin to state the main results in this paper.

\vspace{0.3cm}

\noindent {\bf Theorem\hspace{0.1cm}A.} {\emph{ Let $A$ be a
complete noncompact nonnegatively curved Alexandrov space, and let
$\pi:A\to S$  be the Sharafutdinov retraction. Suppose that  $A$ is
topologically nice and that $S$  is of $codimension\ 2$. Then $\pi $
is a submetry. }}

\vspace{0.3cm}

We point out that the regularity assumption in Theorem A is used to
classify the space of directions of points on $S$, which is crucial
in the proof of Theorem A. Using Theorem A, one easily gets an
affirmative answer to Open Problem (0.4.1) in the following
$4$-dimensional topological manifold case.
\begin{corollary}
Let $A$ be a complete noncompact nonnegatively curved
$4$-dimensional  Alexandrov space. Suppose $A$ is a topological
manifold. If $A$ has positive curvature around a point, then a soul
is a point.
\end{corollary}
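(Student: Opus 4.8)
The plan is to apply Theorem A together with two additional ingredients: that a topological $4$-manifold is automatically topologically nice (so Theorem A is applicable), and that a submetry Sharafutdinov retraction onto a soul of positive dimension rules out positive curvature around any point — this last being the implication $(0.4.2)\Rightarrow(0.4.1)$ advertised in the introduction. Granting both, I argue by contradiction: if $A$ is a topological $4$-manifold with positive curvature around a point and its soul $S$ is not a point, then, $A$ being noncompact, $1\le\dim S\le 3$, and I will exhibit a submetry $\pi\colon A\to S$ in each of the three possible codimensions, contradicting the second ingredient.

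First I would check that $A$ is topologically nice. At any point $p$ of a topological $n$-manifold, Perelman's local conical structure theorem gives a neighborhood homeomorphic to the open cone over $\Sigma_p$, forcing $\Sigma_p$ to be a homotopy $(n-1)$-sphere; iterating, every iterated space of directions of $A$ is a homotopy sphere of dimension at most $3$. Since a homotopy $k$-sphere carrying an Alexandrov metric is homeomorphic to $S^k$ for $k\le 3$ — trivially for $k\le 1$, by the classification of surfaces for $k=2$, and by the Poincar\'e conjecture together with Perelman's stability theorem for $k=3$ — all iterated spaces of directions of $A$ are spheres, i.e.\ $A$ is topologically nice.

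Next I would establish the second ingredient. Assume $\pi\colon A\to S$ is a submetry and $\dim S\ge 1$. Fix $q\in A$ and put $p=\pi(q)$. Choose a nontrivial geodesic $\gamma$ of $S$ through $p$, lift it to a horizontal geodesic $\tilde\gamma$ through $q$ (submetries lift geodesics isometrically), and choose a minimal geodesic $\sigma$ from $q$ to $S$. Following Perelman's argument for properties $(0.2.1)$ and $(0.2.2)$, the submetry and soul structure force $\sigma$ to land on $p$ and the family obtained by transporting $\sigma$ along $\tilde\gamma$ to sweep out a flat totally geodesic strip through $q$; hence the curvature vanishes near an interior point of this strip. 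As $q$ is arbitrary, no point of $A$ has positive curvature around it.

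It remains to produce the submetry in codimensions $1$, $2$, $3$. If $\dim S=3$ then $\mathrm{codim}\,S=1$ and $\pi$ is a submetry by the case recorded after Theorem 0.5 (\cite{[SY]}). If $\dim S=2$ then $\mathrm{codim}\,S=2$ and $\pi$ is a submetry by Theorem A, which applies since $A$ is topologically nice. If $\dim S=1$ then $S$ is a closed geodesic (a circle of some length $L$); because $\pi$ is a distance-nonincreasing deformation retraction and $S\hookrightarrow A$ is a homotopy equivalence, any loop freely homotopic to the $n$-fold iterate $S^n$ projects under $\pi$ to a loop in $S$ of winding number $\pm n$, hence of length $\ge nL$, so $S^n$ is shortest in its free homotopy class for every $n$; consequently the lift $\widehat S$ of $S$ to the universal cover $\widehat A$ is a line, the splitting theorem gives an isometry $\widehat A\cong\mathbb R\times A'$ with $A'$ a complete nonnegatively curved $3$-dimensional Alexandrov space, and taking any geodesic of $A'$ yields a flat totally geodesic strip through every point of $\widehat A$, hence through every point of $A$ (the covering is a local isometry), so again $A$ has no point of positive curvature around it. In each case we contradict the hypothesis, so a soul of $A$ must be a point. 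I expect the main obstacle to be the third paragraph: transferring Perelman's flat-strip construction to the Alexandrov setting from only the submetry hypothesis, with no Riemannian submersion available; the topological-niceness check and the splitting argument in codimension three should be comparatively routine.
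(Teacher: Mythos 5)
Your overall skeleton (reduce to $\dim S\in\{1,2,3\}$, check topological niceness, and derive a contradiction from the existence of a submetry onto a positive-dimensional soul) matches the paper's intent, and your first, second and final paragraphs are essentially workable: the niceness check is the content of the paper's Lemma 3.1, and the paper disposes of $\dim S=1,3$ simply by citation to \cite{[SY]} rather than by your splitting and codimension-one arguments. The genuine gap is in your third paragraph, which is the only step that actually produces the contradiction in the codimension-two case. You propose to ``follow Perelman's argument for $(0.2.1)$ and $(0.2.2)$'' to sweep out a flat totally geodesic strip through an arbitrary point $q$. But Perelman's proof of $(0.2.2)$ is a genuinely Riemannian argument (parallel transport, the $C^1$ submersion structure of $P$), none of which exists on an Alexandrov space; the only available Alexandrov analogue is Yamaguchi's partial result (Theorem 0.5), which produces flat strips only along special normal directions and, as the paper emphasizes, need not pass through every point of $A$. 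So the assertion ``the submetry and soul structure force $\sigma$ to \dots sweep out a flat totally geodesic strip through $q$'' is precisely the open difficulty you yourself flag, not something that can be deferred to Perelman.

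The paper closes this step by a completely different and much shorter mechanism: if the curvature is positive on $B(q,r)$, then the modified Busemann function $-e^{f}$ is strictly concave there by \cite{[CDM2]}, so the gradient flow defining the Sharafutdinov retraction is strictly distance-decreasing on $B(q,r)$; but a submetry satisfies $\pi(B(x,s))=B(\pi(x),s)$ for all $x$ and $s$, hence cannot strictly decrease all distances on an open ball. No flat strips are needed. You should replace your third paragraph with this concavity argument (or some other complete proof of the implication $(0.4.2)\Rightarrow(0.4.1)$); as written, the proposal assumes the hardest part.
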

We now explain the main ideas in the proof of Theorem A. We may
assume that $A$ is simply connected. (If $A$ is not simply
connected, one can pass to the universal cover, see Lemma 1.5.)

For $p\in S$, and $v\in\Uparrow_p^{\partial \Omega_c}$ (all
directions at $p$ of minimizing geodesics from $p$ to $\partial
\Omega_c$),  where $c$ is a fixed noncritical value of the Busemann
function $f$ (defined in Section 1.1) and $\partial
\Omega_c=f^{-1}(c)$, there is always a ray $\sigma$ at $p$ such that
$\sigma^+(0)=v$. We call such ray a special normal ray to $S$. Let
$\mathscr{F}\subseteq A$ be the union of points on all such rays

\vspace{0.3cm}
\begin{center}
$\mathscr{F}=\{x\in A\ |\ x\in \sigma: a\   ray\ with\
\sigma(0)=p\in S,\sigma^+(0)=v\in\Uparrow_p^{\partial \Omega_c}\}.$
\end{center}

\vspace{0.3cm}

Observe that in the special case $\mathscr{F}=A$, Theorem A follows
easily from Theorem 0.5 (see the proof following Lemma 1.1.).

If $\mathscr{F}\neq A$, we set

\vspace{0.3cm}\begin{center}$F_v=\cup\ \{x\ |\ x\in$ flat totally
geodesic strips in $A$ spanned by $\sigma$ and all minimal geodesics
in $S$ from $p$ to all the points in $S\}$.
\end{center}

\vspace{0.3cm}We have the following
\begin{key lemma}
Let the assumptions be as in Theorem A, assume that $A$ is simply
connected. If $\mathscr{F}\neq A$, then for $v\in
\Ua_p^{\partial\Omega_c}$, $F_v$ with the restricted metric
isometrically splits, i.e., $F_v\overset{\rm isom}{\cong}S\times
\mathbb R^1_+$.
\end{key lemma}
Our proof of Key Lemma 0.7 relies on a property of space of
directions on $S$ (Proposition 2.1; note that if we can solve
Conjecture 4.5 completely for the case in Theorem A, then the proof
of Theorem A is a little simpler), where the regularity conditions
are required. Assuming Key Lemma 0.7, we can choose $F=\cup_{1\leq
i\leq l}F_{v_i},\ l\leq 3$ such that the distance function from $F$,
$\dist_F$, is concave in $A\backslash F$ (see Lemma 2.8). For any
given point $x\in A\backslash F$, let $\hat x\in (S,a)\subset
F_u\subset F$ such that $|x\hat x|=|x F|$. When $a\neq 0$, using the
concavity of $\dist_F$, we can construct a ``gradient flow" of
$\dist_F$ from $(S,a)$ passing $x$, denoted by $\Psi^t_a$, which is
distance nonincreasing (cf. \cite{[Pet2]}).

Consider the composition $i\circ\pi\circ \Psi_a^t:\,(S,a)\to (S,a)$,
(where $i:S\to(S,a)$ is the natural isometry,) which is distance
nonincreasing and a deformation, thus onto since $t=0$ is onto. A
standard argument shows that 
$\pi|_{\Psi_a^{|x\hat x|}((S,a))}$ is an isometry. We denote
$\Psi_a^{|x\hat x|}((S,a))$ by $S_x$. When $a=0$, we use a limit
argument (see 2.4) to get a $S_x\ni x$, such that $\pi|_{S_x}$ is an
isometry.

With the above preparations, we are ready to explain that the
Sharafutdinov retraction $\pi:\, A\to S$ is a submetry. First $\pi$
is distance nonincreasing (Theorem 0.3). For any $\bar y\in S$, it
suffices to find $y\in A$ such that $|xy|=|\pi(x)\bar y|$ and
$\pi(y)=\bar y$. Now it is clear that $y=S_x\cap \pi^{-1}(\bar y)$
satisfies the desired condition.

Our argument can be viewed as a generalization of \cite{[SY]}, where
noncompact nonnegatively curved Alexandrov spaces with souls of
codimension 1 are classified.

We organize the rest of the paper as follows:

In Section 1, we will collect some basic notions and properties
which will be used throughout the paper.

In Section 2, we will prove Theorem A by assuming Proposition 2.1.

In Section 3, some applications are proved there.

In Section 4, we will prove some structural results for spaces of
directions of points on $S$ and verify Proposition 2.1 at the end.

\section{Preliminaries}
We start this section with fixing some  notations:\\
$\dist_x(y)=|xy|$:\ \ \ \  the distance between points $x$, $y\in A$\\
\noindent $\Alex^n(\kappa)$:\ \ \ \   the class of complete
n-dimensional Alexandrov spaces with curvature $\geq \kappa$\\
$\partial A$:\ \ \ \ the boundary of A, $A\in \Alex^m(\kappa)$\\
\noindent $S^n(\kappa)$: \ \ \  the $n$-space form of curvature
$\kappa$\\
\noindent $B(p,r)=\{x\ |\ |xp|\leq r\}$\\
\noindent $S(p,r)=\{x\ |\ |xp|= r\}$\\
$Fr(C)$:\ \ \ \ the union of points whose any neighborhood
contains points in $C$ and in the complement of $C$\\
$[pq]$:\ \ \ \ \  a minimal geodesic from $p$ to $q$, $p,q\in A$\\
$\uparrow_p^q$:\ \ \ \  a direction at $p$ of a minimizing geodesic
  from $p$ to $q$\\
$\Uparrow_p^q$:\ \ \ \  the set of all directions at $p$ of  minimizing geodesics from $p$ to $q$\\
$\angle(\uparrow_y^x,\uparrow_y^z)$:\ \ \ \
the angle between  $\uparrow_y^x$ and $\uparrow_y^z$\\
$\tilde\angle_k(x,y,z)$:\ \ \ \  the corresponding comparison angle
  on space form $S^k_2$\\
$[CD]=\{x\ | \ x\in [cd],\ c\in C,\ d\in D\}$, where $C,\ D$ are
 subsets of $A$\\
 $C^{\perp}=\{v\in A\ |\ |vC|=\frac\pi2\}$, where $C\subset
 A$\\
 Flat totally geodesic strip $P$ in $X$, $X\in \Alex$:\ \ \ \ 
$P$ is the image of an isometric embedding from $\{(x,y)\in\mathbb
  R^2\ |\ 0\leq x\leq 1,y\geq 0\}$ with the standard flat metric to $X$\\


For basic notions related to  Alexandrov spaces, we refer to
\cite{[BBI]},
\cite{[BGP]}, \cite{[Pet1]}, \cite{[Pl2]} and \cite{[Shi]}. 

In the following, we shall briefly recall the construction of souls
using Busemann function and the construction of the Sharafutdinov
retractions. Then we shall establish some properties which will be
used in our proof or which may not be found in literature.

\subsection{Souls and Sharafutdinov retractions}
\ \\

Throughout this paper, we say a subset $C$ convex, if for any $p$,
$q$ in $C$, $C$ contains at least one minimal geodesic joining
$p,q$.

Let $A\in \rm {Alex}^n(0)$ be noncompact, and let $p\in A$. The
Busemann function at $p$ is defined by
\[
f(x)=\lim_{t\to \infty}(|x,S(p,t)|-t),
\]
and $f$ is a proper concave function with definite maximum
$a_0=\max\limits_{x\in A}\{f(x)\}$. Then $C_0=f^{-1}(a_0)$ satisfies
that for any two points, all minimal geodesics joining them are
contained in $C_0$ (and thus $C_0$ is convex). If $\partial
C_0=\emptyset$, then $C_0=S$, a soul of $A$. Otherwise, the distance
function, $f_1=\rm{dist}_{\partial C_0}: C_0\to \mathbb R^1$, is
again concave. Let $a_1=\max\limits_{x\in C_0}\{f_1(x)\}$, and let
$C_1=f_1^{-1}(a_1)$. Repeating the above process for $C_1$, and
after a finite number of steps we obtain $C_k=S$, a convex subset
without boundary.

Next we will recall the construction of a distance nonincreasing
deformation retraction from $A$ to $S$,  the so-called Sharafutdinov
retraction.

Let $\nabla_q f$ denote the gradient of $f$ at $q$. Since $f$ is
concave, there are $f$-gradient curves. We reparameterize gradient
curves so that a new curve $\alpha(t)\subset (A-C_0)$ satisfies that
$\alpha(0)=x$ and $\alpha^+(t)=\dfrac{\nabla_{\alpha(t)} f}{|\nabla
_{\alpha(t)}f|^2}$. Let $\beta(t)\subset (A-C_0)$ be the
reparametrization of the gradient curve with $\beta (0)=y$. Without
loss of generality, we may assume that $f(x)\leq f(y)$ and
$f(\alpha(t_0))=f(y)$. By a direct computation, we get
\begin{center}
$|\alpha(t)\beta(0)|^+(t)=-\langle \alpha^+(t), \Uparrow
_{\alpha(t)}^{\beta(0)}\rangle\leq 0,\ t\leq t_0.$
\end{center}
Hence $$|\alpha(t_0)\beta(0)|\leq |\alpha(0)\beta(0)|$$ and
\begin{center}
$|\alpha(t+t_0)\beta(t)|^+(t)=-\langle \alpha^+(t+t_0),\Uparrow
_{\alpha(t+t_0)}^{\beta(t)}\rangle-\langle
\beta^+(t),\Uparrow_{\beta(t)}^{\alpha(t+t_0)}\rangle\leq 0$.
\end{center}
Therefore $|\alpha(t+t_0)\beta(t)|\leq|\alpha(0)\beta(0)|$. 
From this we can get that $\alpha(t)$ can be uniquely extended to
include the points on $C_0$, denoted by $\bar \alpha (t)$. Define a
map, $\pi_0: \ A\to C_0$, by $\pi_0(x)=\bar\alpha(a_0-f(x))$, with
$x=\bar\alpha(0)$. We have showed that $\pi_0$ is distance
nonincreasing.


If $\partial C_0\neq \emptyset$, repeating the above, we obtain that
$\pi_1:C_0\to C_{1}$ is distance nonincreasing.  Eventually, we will
get the Sharafutdinov retraction $\pi=\pi_{k}\circ\cdots\circ
\pi_0$.

\subsection{Flat totally geodesic strips}
\ \\

The goal of this subsection is to give the following unbounded
version of Theorem 0.5, which is known to experts (\cite{[Ya]}).
Since we can not find a complete proof in literature, for the
convenience of readers, we include a proof here.

A useful alternative expression of $f$ is: for any
$c<a_0=\max\limits_{x\in A}\{f(x)\}$,  for $x\in
\Omega_c=f^{-1}([c,a_0])$, $f(x)=|x\partial \Omega_c|+c$ ([9,
Proposition A.1 (5)]).
\begin{lemma}
Let $A\in\Alex^n(0)$ be noncompact, and let $f$ be a  Busemann
function. Then the following properties hold:\newline \noindent
(1.1.1) For $ p\in S$, let $q\in
\partial\Omega_{f(q)}$ such that $\left|pq\right |=\left
|p\partial \Omega_{f(q)}\right |$. Then $\left[pq\right ]$ can be
extended to a ray $\gamma$, with $\gamma (0)=p$ and
$\left|p\gamma(t)\right |=\left|p\partial
\Omega_{f(\gamma(t))}\right |$, for any $t\geq 0$.\newline\noindent
(1.1.2) For $p\neq r\in S$,  there exits  a ray $\sigma$  with
$\sigma (0)=r$ and $\left|r\sigma(t)\right |=\left|r\partial
\Omega_{f(\sigma(t))}\right |$, for any $t\geq 0$, and
$\{\gamma,[pr],\sigma\}$ bounds a  flat totally geodesic strip.
\end{lemma}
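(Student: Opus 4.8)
The plan is to bootstrap from the bounded statement (Theorem~0.5) applied to the exhausting convex sets $\Omega_c=f^{-1}([c,a_0])$, $c<a_0$, and then let $c\to-\infty$; throughout I would use the recalled formula $f(x)=|x\partial\Omega_c|+c$ on $\Omega_c$ (so that $|p\partial\Omega_c|=a_0-c$ for $p\in S\subseteq C_0=f^{-1}(a_0)$) and the properness of $A$.

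For (1.1.1): put $b=f(q)$, so $|pq|=a_0-b$. For each $c<b$ I would concatenate $[pq]$ with a minimal geodesic from $q$ to $\partial\Omega_c$; since $|q\partial\Omega_c|=b-c$, the concatenation runs from $p$ to $\partial\Omega_c$ with length $(a_0-b)+(b-c)=a_0-c=|p\partial\Omega_c|$, hence is minimal, hence a genuine geodesic extension of $[pq]$. Along any minimal geodesic from $p$ to $\partial\Omega_c$ the $1$-Lipschitz concave function $f$ drops by exactly $a_0-c$ over a segment of length $a_0-c$, so it decreases at unit rate; thus $|px|=|p\partial\Omega_{f(x)}|$ for every point $x$ on the extension. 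Letting $c\to-\infty$ and applying Arzel\`a--Ascoli diagonally on compact parameter intervals produces a ray $\gamma$ extending $[pq]$ whose finite subarcs, being limits of minimal geodesics, are minimal; the identity $|p\gamma(t)|=t=|p\partial\Omega_{f(\gamma(t))}|$ then passes to the limit.

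For (1.1.2): since $S$ is convex and $S\subseteq C_0$, take $[pr]\subseteq C_0$; then $\dist_{\partial\Omega_c}\equiv a_0-c$ along $[pr]$, so $[pr]$ is a horizontal minimal geodesic for $\dist_{\partial\Omega_c}$ in $C=\Omega_c$, while $\gamma|_{[0,a_0-c]}$ is a minimal geodesic from $p$ to $\partial\Omega_c$ realizing $|p\partial\Omega_c|$. To apply Theorem~0.5 I need $\angle(\gamma^+(0),\uparrow_p^r)=\tfrac\pi2$: ``$\geq\tfrac\pi2$'' is immediate from the first variation formula, since moving from $p$ along $C_0$ cannot increase $\dist_{\partial\Omega_c}$, which is maximal on $C_0$; and ``$\leq\tfrac\pi2$'' is the orthogonality of distance-realizing geodesics to the maximum set of the distance function, a standard fact from soul theory resting on the concavity of $\dist_{\partial\Omega_c}$ together with curvature $\geq0$ (cf.\ \cite{[CG]} and \cite{[Ya]}). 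Granting this, Theorem~0.5 gives for each $c$ a flat totally geodesic rectangle $R_c$ bounded by $[pr]$, by $\gamma|_{[0,a_0-c]}$, and by a minimal geodesic $\gamma_1^c$ from $r$ to $\partial\Omega_c$ along which (again by the unit-rate drop of $f$) $|rx|=|r\partial\Omega_{f(x)}|$. Now let $c\to-\infty$: each $R_c$ is an isometric embedding of $[0,\ell]\times[0,a_0-c]$ ($\ell=|pr|$), sharing the bottom edge $[pr]$ and having $\gamma|_{[0,a_0-c]}$ as its edge from $p$, so by Arzel\`a--Ascoli the edges $\gamma_1^c$ subconverge to a ray $\sigma$ from $r$ and the $R_c$ converge on compacta to an isometric embedding of $[0,\ell]\times[0,\infty)$; its image is flat and totally geodesic (a limit of such rectangles) and does not degenerate, the bottom edge keeping length $\ell>0$. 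This is the required strip, and the identity for $\sigma$ passes to the limit.

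The step I expect to be the main obstacle is the orthogonality $\angle(\gamma^+(0),\uparrow_p^r)\leq\tfrac\pi2$: at $p\in S$ the Busemann function attains an interior maximum, so its differential there detects only a degenerate critical point, and the purely local (first-order) data is consistent with an obtuse angle; the equality genuinely requires the global use of curvature $\geq0$ --- in effect the Cheeger--Gromoll flat-strip mechanism underlying Theorem~0.5 --- which is why I would import it rather than re-derive it. A secondary technical point is to confirm that the limit of the $R_c$ is an honest $2$-dimensional totally geodesic flat strip: that the isometric embeddings do not collapse, and that total geodesy survives passing to the limit.
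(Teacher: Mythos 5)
Your proof is correct and follows essentially the paper's route: establish the right angle at $p$ between $\gamma^+(0)$ and $\uparrow_p^r$, then feed Theorem 0.5. Two points of comparison are worth recording. First, the ``standard fact'' you flag as the main obstacle is exactly the paper's Lemma 1.2 ([42, 2.5]), which is already quoted in the paper as a black box: the paper observes that $|pq|=\left|p\partial\Omega_{f(q)}\right|$ forces $|pq|=|qS|$, so the first variation formula gives $|\uparrow_p^q\Sigma_pS|\geq\frac{\pi}{2}$, and Lemma 1.2 applied to the convex boundaryless subset $\Sigma_pS\subset\Sigma_p$ upgrades this to $|\uparrow_p^q\xi|=\frac{\pi}{2}$ for every $\xi\in\Sigma_pS$, in particular for $\xi=\uparrow_p^r$. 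So no further import is needed beyond what the paper itself cites, and your diagnosis that first-order data alone cannot yield the ``$\leq\frac{\pi}{2}$'' direction is accurate. Second, the assembly of the strip differs mildly: the paper stacks rectangles $P_i$ obtained by applying Theorem 0.5 to successive segments $[\gamma(t_i)\gamma(t_{i+1})]$ and then checks by hand that distances between boundary points of $P_1\cup P_2$ are Euclidean, whereas you apply Theorem 0.5 once per level $c$ to get a full rectangle $R_c$ of height $a_0-c$ and pass to a subsequential limit as $c\to-\infty$. Your version is arguably cleaner (each $R_c$ comes from a single application of Theorem 0.5), and your caution about the limit is warranted for the right reason: the $R_c$ need not be nested, since the strip is not unique (see the paper's remark after the proof of Lemma 1.1 citing [42, 14.8]), so a genuine Arzel\`a--Ascoli subsequence is needed; but a uniform limit of distance-preserving maps of $[0,\ell]\times[0,a_0-c]$ is distance-preserving, and since the paper defines a flat totally geodesic strip simply as the image of an isometric embedding of the half-strip, no separate verification of total geodesy is required in the limit.
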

\begin{proof}[Proof of Theorem A for the case that  $\mathscr{F}=A$]
For any $x\in A$, we have that $x\in\gamma$: a special normal ray
from $\bar x$.
Hence $\pi(x)=\bar x$ (see Lemma (1.1.1). For any $\bar y\in S$,
by Lemma (1.1.2), there is a flat totally geodesic strip determined
by $\{\gamma,[\bar x\bar y]\}$; in which we can find $y\in
\pi^{-1}(\bar y)$, such that $|\bar x\bar y|= |xy|$.
\end{proof}

In the proof of Lemma 1.1 we will use the following lemma.
\begin{lemma}[{\rm{[42, 2.5]}}]
Let $\Sigma\in \Alex^n(1)$, and let $C\subset \Sigma$ be a locally
convex closed subset without boundary with positive dimension. If
$v\in \Sigma$ such that $|vC|\geq\frac{\pi}{2}$, then
$|v\xi|=\frac{\pi}{2}$, for any $\xi\in C$.
\end{lemma}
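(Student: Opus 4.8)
The plan is to induct on $n=\dim\Sigma$, the only analytic input being the standard comparison property of distance functions in $\Alex(1)$: along any unit-speed geodesic $\gamma$ of $\Sigma$ the function $1-\cos(d_v\circ\gamma)$ is semiconcave and satisfies $(1-\cos(d_v\circ\gamma))''\le\cos(d_v\circ\gamma)$ in the barrier sense, equivalently $(\cos(d_v\circ\gamma))''+\cos(d_v\circ\gamma)\ge0$. Since $\Sigma$ is compact ($\Alex^n(1)$ spaces have diameter $\le\pi$), $C$ is compact; and $|vC|\ge\pi/2$ just says $d_v\ge\pi/2$, i.e. $g:=\cos\circ d_v\le0$, on all of $C$, so it suffices to show $g\equiv0$ on $C$. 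On the set $\{d_v\ge\pi/2\}$ the displayed inequality gives $(\cos(d_v\circ\gamma))''\ge-\cos(d_v\circ\gamma)\ge0$, so $g$ is convex along every geodesic of $\Sigma$ lying in $C$; by local convexity such a geodesic joins any two sufficiently close points of $C$, and the geodesics in $C$ issuing from a fixed point fill a neighbourhood of that point in $C$. We may assume $C$ connected.

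First I would show that $d_v$ is constant on $C$. Fix $\xi_0\in C$ with $d_v(\xi_0)=|vC|=:\rho$, so $\xi_0$ maximizes $g|_C$. Write $\Sigma_{\xi_0}$ for the space of directions of $\Sigma$ at $\xi_0$ and $\Sigma_{\xi_0}C\subset\Sigma_{\xi_0}$ for the set of directions of geodesics into $C$. For a geodesic $\gamma$ into $C$ with $\gamma(0)=\xi_0$, the first variation formula gives $(d_v\circ\gamma)'(0^+)=-\cos\angle(\gamma^+(0),\Uparrow_{\xi_0}^v)$, which is $\ge0$ because $\xi_0$ minimizes $d_v|_C$; hence $\angle(w,\eta)\ge\pi/2$ for all $w\in\Sigma_{\xi_0}C$ and $\eta\in\Uparrow_{\xi_0}^v$, i.e. $|\eta,\Sigma_{\xi_0}C|\ge\pi/2$ in $\Sigma_{\xi_0}\in\Alex^{n-1}(1)$. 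If $\dim C\ge2$, then $\Sigma_{\xi_0}C$ is a compact, locally convex subset of $\Sigma_{\xi_0}$ without boundary and of positive dimension, so the induction hypothesis applied to $(\Sigma_{\xi_0},\Sigma_{\xi_0}C,\eta)$ upgrades this to $\angle(w,\eta)=\pi/2$; then $\angle(\gamma^+(0),\Uparrow_{\xi_0}^v)=\pi/2$, so $(d_v\circ\gamma)'(0^+)=0$ and $(g\circ\gamma)'(0^+)=0$. A convex function on $[0,\delta)$ with zero right derivative at $0$ is nondecreasing, and since $g\circ\gamma\le g(\xi_0)=\max_C g$ it must equal $g(\xi_0)$ identically. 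Thus $\{d_v=\rho\}\cap C$ contains a neighbourhood of $\xi_0$; running the same argument at each of its points (all minimizers of $d_v|_C$) shows it is open, and it is closed, so it is all of $C$. (If $\dim C=1$ no induction is needed: $C$ is a closed geodesic, and the periodic continuous function $g\circ\gamma$, being convex on $\mathbb R$, is constant.)

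Next I would rule out $\rho>\pi/2$. Since $\dim C\ge1$, pick $\xi\ne\xi'$ in $C$ so close that a minimal geodesic $[\xi\xi']$ of $\Sigma$ lies in $C$; its midpoint $m$ then lies in $C$, so $d_v(m)=\rho$. If $\rho<\pi$, shrinking $|\xi\xi'|$ we may assume the side $[\tilde\xi\tilde\xi']$ of the comparison triangle $\tilde v\tilde\xi\tilde\xi'$ in $S^2$ lies in $\{d_{\tilde v}>\pi/2\}$, where $\cos d_{\tilde v}$ is strictly convex along geodesics; hence its midpoint $\tilde m$ has $\cos|\tilde v\tilde m|<\cos\rho$, i.e. $|\tilde v\tilde m|>\rho$, and the point-side Toponogov comparison (curvature $\ge1$) gives $d_v(m)=|vm|\ge|\tilde v\tilde m|>\rho$, a contradiction. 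If $\rho=\pi$, the triangle $v\xi\xi'$ would have perimeter $2\pi+|\xi\xi'|>2\pi$, which is impossible in $\Alex(1)$; so $\xi=\xi'$, again a contradiction. Hence $\rho=\pi/2$ and $d_v\equiv\pi/2$ on $C$.

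I expect the constancy step to be the main obstacle, specifically the passage from $\angle(w,\eta)\ge\pi/2$ to equality: this is what forces the induction on dimension through the spaces of directions, and it is where the absence of boundary of $C$ is indispensable — without it $\Sigma_{\xi_0}C$ could inherit a boundary, so the inductive hypothesis would fail, and, as the case $\dim C=1$ indicates, $g|_C$ need not be constant.
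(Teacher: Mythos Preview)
The paper does not supply its own proof of this lemma; it is quoted from Yamaguchi \cite{[Ya]} (their Lemma 2.5), and the surrounding text only records the statement and the strengthened version in Remark 1.3. So there is no in-paper argument to compare against line by line.

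Your argument is correct and is essentially the standard one. The induction on $n$ through spaces of directions is exactly how the ``no boundary'' hypothesis gets used: at a nearest point $\xi_0\in C$ the first variation gives $|\eta,\Sigma_{\xi_0}C|\ge\pi/2$, and since $\Sigma_{\xi_0}C\subset\Sigma_{\xi_0}\Sigma$ is convex without boundary (local convexity of $C$ makes $T_{\xi_0}C$ a convex cone), the inductive hypothesis upgrades this to equality. Together with the convexity of $\cos d_v$ along geodesics in the region $\{d_v\ge\pi/2\}$ (the inequality $(\cos d_v)''+\cos d_v\ge0$), this forces $d_v$ to be locally, hence globally, constant on $C$; your separate handling of $\dim C=1$ via periodicity is the natural base. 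The exclusion of $\rho>\pi/2$ by point-side comparison (and the perimeter bound $\le 2\pi$ for the extreme case $\rho=\pi$) is clean and correct. One small remark: the sentence ``the geodesics in $C$ issuing from a fixed point fill a neighbourhood of that point'' is justified precisely by local convexity --- a convex neighbourhood of $\xi_0$ in $C$ is swept out by minimal $\Sigma$-geodesics from $\xi_0$ --- so you might make that dependence explicit. With that, your proof matches in spirit (and likely in detail) the argument indicated by Remark~1.3, which singles out the role of the nearest point lying in the interior of $C$.
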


We emphasize that Lemma  1.2 will be frequently used throughout the
paper.

Let $X$ be an Alexandrov space. For $p\in X$, let $T_pX$ (or $T_p$)
denote the tangent cone of $X$ at $p$, and let $\Sigma_pX$ (or
$\Sigma_p$) denote the space of directions of $X$ at $p$.
\begin{proof}[Proof of Lemma 1.1]
(1.1.1): Let $q_1\in A$ such that $f(q_1)<f(q)$ and
$\left|qq_1\right |=\left|q\partial \Omega_{f(q_1)}\right |$. Then
$f(p)-f(q_1)=\left|p\partial \Omega_{f(q)}\right |+\left|q\partial
\Omega_{f(q_1)}\right |\geq \left|pq\right |+\left|qq_1\right
|\geq\left|pq_1\right |\geq\left|p\partial \Omega_{f(q_1)}\right
|=f(p)-f(q_1)$. Thus 
$[pq]\cup [qq_1]$ is a minimal geodesic with the desired property.
Iterating this process, one can get the desired ray $\gamma(t)$.

(1.1.2): Note that $\left|pq\right |=\left |p\partial
\Omega_{f(q)}\right |$ implies that $|pq|=|qS|$. Then by the first
variation formula, $|\ua_p^q\Sigma_pS|\geq\frac\pi2$, and by Lemma
1.2, $|\ua_p^qv|=\frac\pi2$, for any $v\in\Sigma_pS$. Thus by
Theorem 0.5, for $t_1>0$, there exists a flat totally geodesic
rectangle $P_1$ with two of the edges $[p\gamma(t_1)]$ and $[pr]$.
Hence there is a corresponding point $r_1\in\partial \Omega_{f\circ
\gamma(t_1)}$ such that $|rr_1|=|r\partial \Omega_{f\circ
\gamma(t_1)}|$. By applying Theorem 0.5, we get another flat totally
geodesic rectangle $P_2$ with two of the edges
$[\gamma(t_1)\gamma(t_2)]$ and $[\gamma(t_1)r_1]$, there is a
corresponding point $r_2\in\partial \Omega_{f\circ \gamma(t_2)}$,
such that $|r_1r_2|=|r_1\partial \Omega_{f\circ \gamma(t_2)}|$. Next
we will show that $P_1\cup P_2$ is a flat totally geodesic
rectangle. There is a canonical map, $g:R=\{(x,y)\in \mathbb R^2\ |\
0\leq x\leq |pr|, 0\leq y\leq t_2\}\to P_1\cup P_2$, with
$g((0,0))=p$, and $g(\{x=0,0\leq y\leq t_2\})=\gamma$. In order to
show that $g$ is an isometry, it suffices to show that
$|g(z_1)g(z_2)|=|z_1z_2|$, for any $z_1,z_2\in\partial R$. First one
can easily see that $g(\{x,0\leq y\leq t_2\})$ are all minimal
geodesics, i.e., vertical direction are all isometry. The left cases
are similar. We just show the case of $z_1=(0,t_2)$ and
$z_2=(|pr|,0)$. For $\gamma(t),0\leq t\leq t_2$ and $[pr]$, we can
apply Theorem 0.5 to get another flat totally geodesic rectangle,
thus we get that $|\gamma(t_2)r|=\sqrt {t_2^2+|pr|^2}$, then
$|g(z_1)g(z_2)|=|z_1z_2|$ follows.

Let $P=\cup_{i\geq 0} P_i$. It follows that $P$ is a flat totally
geodesic strip and there is a corresponding geodesic ray from $r$
with the desired property.
\end{proof}
Note that flat totally geodesic strip in Lemma (1.1.2) may not be
unique, see example [42, 14.8].
\begin{remark}
Inspecting the proof of Lemma 1.2, one can see that when $\partial
C\neq \emptyset$ the following holds: Let $x\in C$ be a point such
that $|vC|=|vx|$. If $x\not\in\partial C$, then
$|v\xi|=\frac{\pi}{2}$, for any $ \xi\in C$. We will use this
observation in Section 4.
\end{remark}

\subsection{A reduction}
\ \\

The goal here is to reduce the proof of Theorem A to the simply
connected case.

Let $A\in \Alex^n(\kappa)$. Recall that $p\in A$ is  topologically
regular, if $\Sigma_pA$ is homeomorphic to a sphere. A topologically
regular point has a neighborhood homeomorphic to a Euclidean ball.
$A$ is called topologically regular, if all points are topologically
regular, and thus $A$ is a topological manifold.

A point $p\in A$ is called topologically nice, if the iterated
spaces of directions, i.e., $\Sigma_pA,\ \Sigma_{v_1}\Sigma_pA,\
...$ are all homeomorphic to spheres. $A$ is called topologically
nice, if all points are topologically nice.
Topologically nice implies topologically regular, but the converse
may not be true.
\begin{example}[{\rm{\cite{[Ka2]}}}]
Let $\Sigma^3$ be the Poincar\'e homology 3-sphere with constant
curvature $1$. 
Then the three-fold spherical suspension of $\Sigma^3$,
$S^3(\Sigma^3)\in\Alex^6(1)$, 
is topologically regular but not topologically nice.
\end{example}
\begin{lemma}
If Theorem A holds for simply connected Alexandrov spaces, then it
holds for any non-simply connected Alexandrov spaces.
\end{lemma}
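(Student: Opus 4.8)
The plan is to pass to the universal cover, apply the (assumed) simply connected case there, and then push the submetry property back down along the covering. Let $A$ be non--simply connected satisfying the hypotheses of Theorem~A, and let $\rho\colon\tilde A\to A$ be the universal covering, carrying the length metric pulled back from $A$, so $\rho$ is a $1$-Lipschitz local isometry. First I would record the routine facts: $\tilde A\in\Alex^n(0)$ (lower curvature bounds pass to locally isometric covers); $\tilde A$ is noncompact (else $A=\rho(\tilde A)$ would be compact); and $\tilde A$ is topologically nice (a local isometry induces isometries of spaces of directions, hence of all iterated spaces of directions, and every point of $\tilde A$ lies over one of $A$). Since $\pi\colon A\to S$ is a deformation retraction, $\pi_*\colon\pi_1(S)\to\pi_1(A)$ is an isomorphism, so $\hat S:=\rho^{-1}(S)$ is connected — it is the universal cover of $S$ — totally convex in $\tilde A$, without boundary, and of codimension $2$ (as $\rho$ is a local homeomorphism). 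Because $\rho$ is a local isometry it carries the gradient flows of the $f_i\circ\rho$ to the lifts of those defining $\pi$, so $\pi$ lifts to a deformation retraction $\tilde\pi\colon\tilde A\to\hat S$ with $\rho_S\circ\tilde\pi=\pi\circ\rho$, where $\rho_S:=\rho|_{\hat S}\colon\hat S\to S$.

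Next I would reduce $\tilde\pi$ to the simply connected case. By the splitting theorem, the universal cover $\hat S$ of the compact space $S$ decomposes isometrically as $\hat S\cong K\times\mathbb R^m$ with $K$ compact simply connected and $m\geq0$ ($m=0$ exactly when $\pi_1(A)$ is finite). The $\mathbb R^m$-lines of $\hat S$ are lines of $\tilde A$ (total convexity of $\hat S$), so $\tilde A$ splits isometrically and compatibly, $\tilde A\cong\tilde A_1\times\mathbb R^m$, where $\tilde A_1$ is simply connected, noncompact, topologically nice, and has soul $K$ of codimension $2$; moreover $\tilde\pi=\tilde\pi_1\times\mathrm{id}_{\mathbb R^m}$ for the Sharafutdinov retraction $\tilde\pi_1\colon\tilde A_1\to K$. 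Applying Theorem~A to $\tilde A_1$ gives that $\tilde\pi_1$ is a submetry, and since a product of submetries is a submetry, $\tilde\pi$ is a submetry. (When $m=0$ this is just Theorem~A applied to $\tilde A$ directly.)

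Finally I would descend. Both $\rho$ and $\rho_S$ are covering maps carrying pulled--back length metrics, hence submetries: each is $1$-Lipschitz, and lifting a minimal geodesic in the base from $\rho(\tilde z)$ to a curve of equal length from $\tilde z$ gives $\rho(B(\tilde z,r))\supseteq B(\rho(\tilde z),r)$, the reverse inclusion being the Lipschitz bound. A composition of submetries being a submetry, $\pi\circ\rho=\rho_S\circ\tilde\pi$ is a submetry; then, for $x\in A$, choosing $\tilde x\in\rho^{-1}(x)$, one has $\pi(B(x,r))=\pi(\rho(B(\tilde x,r)))=(\pi\circ\rho)(B(\tilde x,r))=B(\pi(x),r)$ for all $r$, so $\pi$ is a submetry.

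The main obstacle will be the structural input in the second paragraph: the compatible isometric splitting $\tilde A\cong\tilde A_1\times\mathbb R^m$ with $K$ a codimension--$2$ soul of $\tilde A_1$ and $\tilde\pi=\tilde\pi_1\times\mathrm{id}$ — the Alexandrov analogue of the Riemannian fact that a nonnegatively curved space containing a totally convex subset with a Euclidean de~Rham factor splits off that factor, together with the compatibility of the soul/Sharafutdinov construction with such splittings. By contrast, the compatibility of the gradient--flow construction with the local isometry $\rho$, and the stability of submetries under composition, products and lifting to covers, are routine.
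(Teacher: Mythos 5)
Your proposal is correct and follows essentially the same route as the paper: pass to the universal cover with the pulled-back length metric, handle noncompact $\hat S$ via the splitting theorem $\hat S\cong K\times\mathbb R^m$ inducing $\tilde A\cong \tilde A_1\times\mathbb R^m$, apply the simply connected case upstairs, and descend along the covering (your explicit ``covering local isometries are submetries and submetries compose'' descent is a slightly more detailed version of the paper's one-line descent). The one step you flag as the main obstacle --- that $\tilde\pi=\tilde\pi_1\times\mathrm{id}_{\mathbb R^m}$ --- is exactly what the paper supplies, via a short argument that $\nabla\dist_{\partial\tilde\Omega_c}$ at any point is tangent to the $\tilde A_1$-factor (using $\Sigma_y\tilde A=S^{m-1}*\Sigma_{x_0}\tilde A_1$ and the definition of the gradient), together with the observation that topological niceness passes to the factor $\tilde A_1$.
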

\begin{proof}
Let $\tilde S$ be the universal cover of $S$. Denote the covering
map by $\phi$. Let $\phi^*(A)=\{(p,e)\in \tilde S\times A\ |\
\phi(p)=\pi(e)\}\subset \tilde S\times A$, with the induced
topology. Then by a standard argument, we obtain that $\phi^*(A)$ is
the universal cover of $A$, and $\tilde \phi:\phi^*(A)\to A$,
defined by $\tilde \phi ((p,e))=e$, is the covering map.

Endow $\phi^*(A)$ with the induced metric, denoted by $\tilde A$.
Then $\tilde \phi:\tilde A\to A$ is a local isometry, and $\tilde
\pi:\tilde A\to\tilde S$, with $\tilde\pi((p,e))=p$, is locally
1-Lipschitz.

First we assume that $C_0=S$.
\begin{sub lemma}
Let $\tilde \Omega _c=\{(p,e)\in \tilde S\times \Omega_c\ |\
\phi(p)=\pi(e)\}$. Then $\tilde\Omega_c\subset\tilde A$ is convex.
\end{sub lemma}
\begin{proof}
For $ x,y\in\tilde\Omega _c$, if $[xy]\subsetneqq \tilde \Omega _c$,
by the construction of $\tilde A$, there exists a curve in
$\tilde\Omega _c$ with length $\leq |xy|$, a contradiction.
Thus we get the sublemma.
\end{proof}
Let $\tilde f=\dist_{\partial\tilde \Omega_c}$. By the property of
covering space, $\tilde f((p,e))=|(p,e),\partial\tilde \Omega
_c|=|e,\partial\Omega_c|=f(e)-c$. It follows that
$\partial\tilde\Omega_c$ are level sets of $\tilde f$. And by the
local isometry, $|\nabla \tilde f_{(p,e)}|=|\nabla f_e|$. Hence
$\alpha (t)$ is an $f$-gradient curve if and only if $(p,\alpha(t))$
is a $\tilde f$-gradient curve.

If $\tilde S$ is compact, by the assumption of the lemma, we can see
that $\tilde \pi$ is a submetry (since $\tilde A$ is topologically
nice). Hence $\pi$ is also a submetry.

If $\tilde S$ is not compact, by the splitting theorem \cite{[Mi]},
there is an isometric splitting $\tilde S=\mathbb R^k\times S_0$,
where $S_0$ is simply connected and compact, exactly as the proof of
Riemannian case. It follows that $\tilde A=\mathbb R^k\times A_0$
and $\partial\tilde\Omega _c=\mathbb R^k\times\partial \Omega_c'$.
We claim that $\nabla_{(x,x_0)}\dist_{\tilde\po}\in T_{x_0}A_0$, for
any $(x,x_0)\in\tilde A$. Hence $\tilde\pi=(id, \pi_0)$.

Since $\tilde A=\mathbb R^k\times A_0$ is topologically nice, we
have that $A_0$ is topologically nice (see Remark 1.7), as can be
seen in the proof of [32, Theorem D]. Thus by the assumption of the
lemma, we know that $\pi_0:\ A_0\to S_0$ is a submetry. It follows
that $\tilde\pi$ is a submetry.

Finally, we will verify the claim: for any $x_0\in A_0$ and
$y=(y_1,x_0)\in \mt R^k\times A_0$, we have that $\Sigma_y\tilde
A=S^{k}(\Sigma)$, where $\Sigma=\Sigma_{x_0}A_0$.
Let $v$ be a point such that $|v\Ua_y^{\partial
\tilde\Omega_c}|=\max\limits_{w\in\Sigma_y\tilde A}
\{|\Ua_y^{\partial \tilde\Omega_c}w|\}$. Since
$\partial\tilde\Omega_{\bar c}=\mathbb R^k\times\partial
\Omega_{\bar c}'$, for any $\bar c<a_0$, we have that
$\Ua_y^{\partial \tilde\Omega_c}\in \Sigma$.  It follows that
$v\subset \Sigma$. By the definition of gradient the claim follows.

If $C_0\neq S$, consider $\dist _{\partial\Omega_{a_0}}$ instead of
$f$, we can get the same conclusion.
\end{proof}

\begin{remark}
If $A$ is only topologically regular, then $A_0$ may not be a
topological manifold, even $\tilde A=\mathbb R^k\times A_0$ is a
topological manifold (\cite{[Ka2]}).
\end{remark}

\section{Proof of Theorem A} 

In our proof of Theorem A, 
the following structural results on spaces of directions of points
on soul plays a curial role.

\begin{proposition}
Let the assumptions  be as in Theorem A. For $p\in S$, let $
\Sigma_0^p=\Sigma_p S$, and let $\Sigma_1^p=\{ v\in \Sigma_p A
\,\mid\,|v\Sigma_pS|\geq\frac\pi 2\}$. 
Then
\newline\noindent (2.1.1) $\Sigma_0^p$ is 
homeomorphic to a sphere. \newline\noindent (2.1.2) $\Sigma_1^p$ is
convex\footnote{See Remark 4.3 (2), (1).}  and isometric to one of
the following:

$S^1(r)\ with\ r\leq 1$, $[ab]$, $\{v\}$,   $\{v_1,v_2\}\ with\
|v_1v_2|=\pi$.
\newline\noindent (2.1.3) With the restricted metric,
$[\Sigma_0^p\Sigma_1^p]=\Sigma_0^p*\Sigma_1^p$ with the standard
join metric\footnote{See Definition 4.1}.
\newline\noindent (2.1.4) When $\Sigma_1^p=[ab]$, if there
exists a subset $E$ of $\Sigma_1^p$ such that
$B(E,\frac\pi2)=\Sigma_pA$, then $a,b\in E$. If $|ab|=\pi$ with $o$
the middle point of $[ab]$, then for any $x\in \Sigma_pA$ such that
$|ox|\leq\frac\pi2$, $x\in[\Sigma_0^p\Sigma_1^p]$.
\end{proposition}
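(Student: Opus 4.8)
The plan is to study $\Sigma:=\Sigma_pA\in\Alex^{n-1}(1)$ together with its closed convex subset $\Sigma_0^p=\Sigma_pS$, which has codimension $2$ in $\Sigma$ and empty boundary (souls have $\partial S=\emptyset$ by construction, hence $\partial\Sigma_pS=\emptyset$), and to run a downward induction on $n=\dim A$. For (2.1.1) I would isolate the auxiliary statement that a closed convex subset without boundary of a topologically nice Alexandrov space is again topologically nice; this is itself proved by induction on dimension, since for a point $q$ of such a subset $C$ the space of directions $\Sigma_qC$ sits inside the sphere $\Sigma_qA$ as a closed convex subset without boundary of strictly smaller dimension, so the inductive hypothesis governs all of its iterated spaces of directions. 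Applying this to $C=S$ shows $S$ is topologically nice, hence $\Sigma_0^p$ is homeomorphic to a sphere. Next, since $\Sigma_0^p$ is a closed locally convex subset without boundary of $\Sigma\in\Alex^{n-1}(1)$ (the small cases $\dim S\le 1$ being handled directly, e.g. via the suspension lemma), Lemma 1.2 gives $\Sigma_1^p=\{v\mid |v\xi|=\tfrac\pi2\text{ for all }\xi\in\Sigma_0^p\}$, i.e. $\Sigma_1^p=(\Sigma_0^p)^{\perp}$; in particular every minimal geodesic from $\Sigma_0^p$ to $\Sigma_1^p$ has length exactly $\tfrac\pi2$, and a maximality argument using $\partial\Sigma_0^p=\emptyset$ shows $\Sigma_1^p\neq\emptyset$.

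The core is a spherical join lemma: $[\Sigma_0^p\Sigma_1^p]$ with the restricted metric is isometric to the spherical join $\Sigma_0^p*\Sigma_1^p$, and $\Sigma_1^p$ is convex. I would prove this in two stages, mimicking at scale $\tfrac\pi2$ the flat totally geodesic strip construction behind Lemma 1.1. First, fix $v\in\Sigma_1^p$: the map $\xi\mapsto\,\uparrow_v^\xi$ embeds $\Sigma_0^p$ isometrically into $\Sigma_v\Sigma$ as a closed convex subset without boundary — this is precisely the previous dimensional instance of the same phenomenon, which is why the whole proposition is organized as an induction on $n$ — and then the rigidity (equality) case of Toponogov comparison in $\Alex(1)$, applied to triangles with one vertex at $v$ and opposite side in $\Sigma_0^p$ (two sides equal to $\tfrac\pi2$), forces $[\Sigma_0^p v]\cong\Sigma_0^p*\{v\}$. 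Second, letting $v$ vary over $\Sigma_1^p$: perpendicularity of all the geodesics $[\xi v]$ to $\Sigma_0^p$, together with comparison, forces the family $\{[\xi v]:\xi\in\Sigma_0^p,\ v\in\Sigma_1^p\}$ to assemble according to the join metric (a $\tfrac\pi2$-analogue of the totally geodesic flat strip), and simultaneously yields that $\Sigma_1^p$ is convex, with the convention on pairs at distance $\pi$ indicated in the footnote.

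Once the join structure of $[\Sigma_0^p\Sigma_1^p]$ is in hand, (2.1.2) follows by a dimension count and a classification: $\dim(\Sigma_0^p*\Sigma_1^p)=(n-3)+1+\dim\Sigma_1^p\le\dim\Sigma=n-1$, so $\dim\Sigma_1^p\le 1$, and a compact Alexandrov space of dimension at most one, curvature $\ge 1$, occurring as $(\Sigma_0^p)^{\perp}$ is either a circle $S^1(r)$ with $r\le 1$ (the bound from $\diam\le\pi$), a segment $[ab]$ with $|ab|\le\pi$, a single point, or a pair $\{v_1,v_2\}$ with $|v_1v_2|=\pi$. Finally, for (2.1.4): in the case $\Sigma_1^p=[ab]$ the topological niceness of $A$ forces $\Sigma$ to be a sphere and $[\Sigma_0^p\Sigma_1^p]=\Sigma_0^p*[ab]$ to be a proper ``cap'' in it with boundary $\Sigma_0^p*\{a,b\}$; both assertions are then computed inside the join. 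If $E\subseteq[ab]$ with $B(E,\tfrac\pi2)=\Sigma$, then, using that every point of the complementary region lies at distance $>\tfrac\pi2$ from each interior point of $[ab]$, the endpoints $a,b$ must already belong to $E$; and when $|ab|=\pi$ with midpoint $o$, the ball $B(o,\tfrac\pi2)$ evaluated in $\Sigma_0^p*[ab]$ is seen to contain no point outside $[\Sigma_0^p\Sigma_1^p]$.

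The main obstacle is the join lemma of the second paragraph: one needs not merely that $[\Sigma_0^p\Sigma_1^p]$ is topologically a join, nor merely a one-sided comparison inequality, but the exact join metric, which requires invoking the rigidity case of Toponogov comparison at scale $\tfrac\pi2$ for the entire perpendicular family $\{[\xi v]\}$ and propagating it — and this forces the delicate bookkeeping of carrying ``$\xi\mapsto\,\uparrow_v^\xi$ is a convex, boundaryless isometric embedding'' as the inductive hypothesis one dimension down. A secondary point is that the degenerate members of the list in (2.1.2) (a pair at distance $\pi$, or a segment) make $\Sigma_1^p$ fail strict convexity or acquire a boundary, so the join lemma and the classification must be stated robustly enough to tolerate them; this is also exactly where the topological-niceness hypothesis is used, keeping $\Sigma_0^p$ a genuine sphere at every level of the induction.
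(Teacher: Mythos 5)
There is a genuine gap, and it sits exactly where the paper has to work hardest. Your ``core join lemma'' assumes implicitly that for $\xi\in\Sigma_0^p$ and $v\in\Sigma_1^p$ the minimal geodesic $[\xi v]$ is unique, so that $\xi\mapsto\ \uparrow_v^{\xi}$ is a well-defined isometric embedding and the perpendicular family $\{[\xi v]\}$ assembles into a join. Rigidity in Toponogov comparison only tells you that each individual triangle with two sides of length $\frac{\pi}{2}$ spans a spherical surface; it cannot rule out that the map $\Uparrow_v^{\Sigma_0^p}\to\Sigma_0^p$ is an $m$-fold locally isometric covering with $m>1$ (equivalently, that $\Sigma_0^p$ is a quotient $S^{J}/\Gamma_0$ rather than a sphere, with $[\Sigma_0^p\Sigma_1^p]$ a ``branched'' join). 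The paper's proof of Theorem 4.4 is devoted almost entirely to excluding this: it shows $\psi_0$ is an $m$-fold covering (Sublemmas 4.11, 4.12), decomposes $\Sigma$ into the two disc bundles $X_i=\{w:|w\Sigma_i|\le\frac{\pi}{4}\}$, and then kills $m$ by $\pi_1$ of lens spaces, Alexander duality, and a gluing of universal covers — a global topological argument that your infinitesimal induction ``$\xi\mapsto\ \uparrow_v^\xi$ is a convex boundaryless isometric embedding one dimension down'' does not reproduce, because the multiplicity $m$ is not visible in any single space of directions. The same issue undermines your proof of (2.1.1): the auxiliary claim that a closed convex boundaryless subset of a topologically nice space is again a sphere is not obtained by the simple induction you sketch (knowing $\Sigma_q C$ is convex and boundaryless in a sphere of lower dimension is the hypothesis of the problem, not its solution); in the paper, $\Sigma_0\overset{homeo}{\simeq}S^J$ is only reached after the deformation-retraction/Alexander-duality step and after $m=1$.

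A second omission: you never use that $p$ lies on the soul, i.e.\ the $\frac{\pi}{2}$-density of $\Uparrow_p^{\partial\Omega_c}$ coming from the first variation of the Busemann function, which gives $|v\Sigma_1^p|\le\frac{\pi}{2}$ for all $v\in\Sigma_pA$. This is an explicit hypothesis of Theorem 4.6 and is what eliminates the degenerate configurations in Cases 3 and 4 there (the various possibilities for $\dim\hat\Sigma_1$ and the behaviour of $a,b$ relative to $\hat\Sigma_1$); without it neither the convexity statement in (2.1.2) nor the join statement (2.1.3) in the cases $\Sigma_1^p=\{v\}$ or $[ab]$ follows. Your treatment of (2.1.2) (dimension count via Lemma 4.7 plus the classification of $1$-dimensional spaces in $\mathrm{Alex}(1)$) and of (2.1.4) is fine once the join structure and the properties of $\hat\Sigma_1^p$ are in hand, but those are precisely the points left unestablished.
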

Because the proof of Proposition 2.1 is technical and long, we will
postpone the proof to the next section. Below we shall prove Theorem
A by assuming Proposition 2.1.
\subsection{Proof of Key Lemma 0.7}
\ \ \ \\

Recall that
\begin{center} $F_v=\cup\ \{x\ |\ x\in$ flat totally
geodesic strips in $A$ spanned by $\sigma$ and all minimal geodesics
in $S$ from $p$ to all the points in $S\}$.\end{center}

\noindent where $p\in S$, $v\in \Uparrow_p^{\po}\subset\Sigma_1^p$
($c<\max {f}$),
$\sigma(t)\subset A$ is a ray with $\sigma(0)=p$ and $\sigma^+(0)=v$
(see Lemma (1.1.1)), and if $\gamma\subset S$ is a minimal geodesic
from $p$ to $q\in S$, then $\{\sigma,\gamma\}$ determines a unique
flat totally geodesic strip (see Lemma (1.1.2)); the uniqueness
follows from Proposition (2.1.3), otherwise, will violate the join
of $[\Sigma_0^p\Sigma_1^p]$.

Let $\alpha$ be the other ray from $q$ which bounds the flat totally
geodesic strip, with $\bar v=\alpha^+(0)\in\Ua_q^{\po}$. We then
define a map
$\phi_{[pq]}:\ \Ua_p^{\po}\to \Ua_q^{\po}$, by $\phi_{[pq]}(v)=\bar
v$. Note that $\phi_{[pq]}$ may depend on the choice of $[pq]$.

Observe that for all $q\in S$, $\phi_{[pq]}$ is independent of
$[pq]$ if and only if $F_v$ is isometric to $S\times \mathbb R^1_+$,
i.e., Key Lemma 0.7 holds.

To prove the independency, we will first show that $\phi_{[pq]}$ is
an isometry. (We point out that the method we used in the proof of
Lemma 2.2 below was previously used in \cite{[Pet1]}.)
\begin{lemma}
Let the assumptions be as in Theorem A. For every $x\in S$ and every
$y\in S$, $\phi_{[xy]}:\ \Ua_x^{\po}\to \Ua_y^{\po}$, is an
isometry, for any minimal geodesic $[xy]$.
\end{lemma}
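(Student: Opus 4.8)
The plan is to show that $\phi_{[xy]}$ preserves distances by exploiting the family of flat totally geodesic strips that make up $F_v$ and the join structure of $[\Sigma_0^p\Sigma_1^p]$ from Proposition (2.1.3). Fix the minimal geodesic $[xy]$ and two directions $v_1,v_2\in\Ua_x^{\po}\subset\Sigma_1^x$, with corresponding rays $\sigma_1,\sigma_2$ at $x$. For each $i$, the pair $\{\sigma_i,[xy]\}$ spans a unique flat totally geodesic strip $P_i$ (uniqueness by Proposition (2.1.3), as noted in the text), along whose far edge is a ray $\alpha_i$ at $y$ with $\bar v_i=\alpha_i^+(0)=\phi_{[xy]}(v_i)$. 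First I would record the "parallel transport along $[xy]$" picture: because $P_i$ is flat and totally geodesic, the geodesic $t\mapsto$ (point at distance $|xy|$ along $P_i$ from $\sigma_i(t)$) is, for each fixed $t$, a minimal geodesic from $\sigma_i(t)$ to the point $\alpha_i(t)$, and these have constant length $|xy|$; this is exactly the content of Lemma (1.1.2) and its proof. So $F_{v_i}$ is swept out by translating $\sigma_i$ along $S$, and $\phi_{[xy]}(v_i)$ is the direction one gets at $y$.

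The key step is to compare $\angle(v_1,v_2)$ at $x$ with $\angle(\bar v_1,\bar v_2)$ at $y$. I would do this by looking at the comparison quadrilateral built from the two strips: consider the points $x$, $y$, $\sigma_1(t)$, $\sigma_2(t)$ and their transports $y$, $\alpha_1(t)$, $\alpha_2(t)$. Using the first variation formula and the flatness of each $P_i$, one gets for small $t$ that $|\sigma_1(t)\sigma_2(t)|$ and $|\alpha_1(t)\alpha_2(t)|$ agree to first order with the Euclidean chord determined by $\angle(v_1,v_2)$ and $\angle(\bar v_1,\bar v_2)$ respectively, so it suffices to compare $|\sigma_1(t)\sigma_2(t)|$ with $|\alpha_1(t)\alpha_2(t)|$. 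One inequality, say $|\alpha_1(t)\alpha_2(t)|\le|\sigma_1(t)\sigma_2(t)|$, should follow from the fact that $t\mapsto|\sigma_1(t)\sigma_2(t)|$ is convex (by nonnegative curvature, comparing the two geodesics $\sigma_1,\sigma_2$) together with the existence of the strips, which force the values at the endpoints $x$-side and the transported behavior to line up; more precisely I would use that $\sigma_1,\sigma_2$ and their $y$-translates are rays realizing distances to $\partial\Omega_c$, so that $|\sigma_i(t)\partial\Omega_{\cdot}|$ is linear in $t$, and then apply Toponogov/monotonicity. For the reverse inequality I would run the same argument with the roles of $x$ and $y$ swapped, using that $\phi_{[yx]}\circ\phi_{[xy]}=\mathrm{id}$ (which is immediate from uniqueness of the strips) — so the map is $1$-Lipschitz in both directions and hence an isometry.

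The main obstacle I expect is controlling $|\sigma_1(t)\sigma_2(t)|$ precisely enough: a priori one only knows convexity of this function and that it equals $|v_1v_2|\cdot t + o(t)$ near $0$, which by itself does not pin down the comparison with the $y$-side. The resolution should come from the join structure: Proposition (2.1.3) says $[\Sigma_0^x\Sigma_1^x]=\Sigma_0^x*\Sigma_1^x$, so the strips through $x$ fit together as a "product-like" family, and I would leverage this to show that the two strips $P_1,P_2$ themselves span a flat totally geodesic region (a flat "wedge" or rectangle over $[xy]$) on which the comparison is an exact equality rather than an inequality — giving $|\sigma_1(t)\sigma_2(t)|=|\alpha_1(t)\alpha_2(t)|$ directly and hence $\angle(v_1,v_2)=\angle(\bar v_1,\bar v_2)$. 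A secondary technical point is the non-uniqueness of minimal geodesics $[xy]$ in $S$ and the possibility that $\Sigma_1^x$ is lower-dimensional; but since the statement fixes $[xy]$ and only asserts that for each such choice $\phi_{[xy]}$ is an isometry, these do not interfere with the argument above.
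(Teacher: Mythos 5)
Your setup (strips, first-order chord comparison at both ends of $[xy]$, two inequalities by symmetry via $\phi_{[yx]}=\phi_{[xy]}^{-1}$) matches the paper's in spirit, but the central step --- actually producing the inequality $|\phi_{[xy]}(v_1)\phi_{[xy]}(v_2)|\ge |v_1v_2|$ --- is missing, and you say so yourself. Neither of the two mechanisms you offer works. Convexity of $t\mapsto|\sigma_1(t)\sigma_2(t)|$ is a nonpositive-curvature phenomenon; in $\Alex(0)$ one has the opposite behavior (monotonicity of comparison angles), and in any case such a statement compares the two rays at $x$ with each other at different heights $t$ --- it never relates the chord near $x$ to the chord near $y$, which is what the lemma requires. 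Your fallback, that $P_1\cup P_2$ spans a flat totally geodesic wedge over $[xy]$, is not something you can invoke here: it is essentially the local product structure of $F_v$, i.e.\ the content of Key Lemma 0.7, whose proof \emph{uses} Lemma 2.2. Assuming it is circular.

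The paper's mechanism, which you did not find, is this: take the two points $\exp_x(\varepsilon v_1)$, $\exp_x(\varepsilon v_2)$ near $x$ and map them into the tangent cone $T_yA$ by $p\mapsto |yp|\!\ua_y^p$, where $\ua_y^p$ is the direction of the diagonal of the flat strip. Since curvature is $\ge 0$, angles dominate comparison angles, so this ``logarithm'' is non-contracting: $|\bar v_{1,\varepsilon}\bar v_{2,\varepsilon}|_{T_y}\ge|\exp_x(\varepsilon v_1)\exp_x(\varepsilon v_2)|=\varepsilon|v_1v_2|+o(\varepsilon)$. The join structure of Proposition (2.1.3) at $y$ then says both vectors $\bar v_{i,\varepsilon}$ have the \emph{same} component $|xy|\!\ua_y^x$ in $C(\Sigma_0^y)$, so projecting onto $C(\Sigma_1^y)$ preserves their mutual distance and isolates $\varepsilon\,\phi_{[xy]}(v_i)+o(\varepsilon)$; dividing by $\varepsilon$ and letting $\varepsilon\to0$ gives the inequality, and swapping $x$ and $y$ gives the reverse. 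So the join structure is used not to build a larger flat region but to cancel the common $[xy]$-component inside $T_y$ after a single application of Toponogov. Without this (or an equivalent) step, your argument does not close.
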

\begin{proof}
For $u,v\in\Ua_x^{\po}$, and $\varepsilon>0$, let $\bar
v_{\varepsilon}=|y,\exp_x(\varepsilon v)|\ua_y^{\exp_x(\varepsilon
v)}$, where $\exp_x:T_x\to A$ is the usual exponential map, and
$\ua_y^{\exp_x(\varepsilon v)}$ is the direction of the minimal
geodesic contained in the flat totally geodesic strip determined by
$\exp_x(t v),t\geq 0$ and $[xy]$. Similarly for $\bar
u_{\varepsilon}$, with $\varepsilon u$ instead of $\varepsilon v$ in
the definition of $\bar v_{\varepsilon}$.

Since by Proposition (2.1.3)
and by the property of flat totally geodesic strip, we have that
$|\bar v_{\varepsilon}^{\perp}\bar u_{\varepsilon}^{\perp}|_{T_y}=
|\bar v_{\varepsilon}\bar u_{\varepsilon}|_{T_y}\geq
|\exp_x(\varepsilon v)\exp_x(\varepsilon u)|=\varepsilon|
uv|_{T_x}+o(\varepsilon)$, where $\perp $ is the orthogonal
projection to $C(\Sigma_1^y)$ (Euclidean cone over $\Sigma_1^y$).

Let $\bar u=\lim\limits_{\varepsilon\to0}\frac{1}{\varepsilon}\bar
u_{\varepsilon}^{\perp}=\phi_{[xy]}(u)$ and $\bar
v=\lim\limits_{\varepsilon\to 0}\frac{1}{\varepsilon}\bar
v_{\varepsilon}^{\perp}=\phi_{[xy]}(v)$. Then $|\bar u\bar
v|_{T_y}\geq |uv|_{T_x}$, and $|\bar u|=|u|,|\bar v|=|v|$. Thus we
get that
$|\phi_{[xy]}(u)\phi_{[xy]}(v)|_{\Sigma_y}\geq|uv|_{\Sigma_x}$.

Similarly, the opposite inequality holds. Hence
$|\phi_{[xy]}(u)\phi_{[xy]}(v)|_{\Sigma_y}=|uv|_{\Sigma_x}$.
\end{proof}
\begin{remark}
It seems that Lemma 2.2 can be strengthened to 
that the isometric class of $\Sigma_1^p$ is independent of $p$.
\end{remark}
\begin{proof}[Proof of Key Lemma 0.7]
Define a map, $\psi: F_v\to S\times \mathbb R^1_+$,
$\psi(\exp_q(t\phi_{[pq]}(v)))=(q,t)$. As pointed out earlier,
$\psi$ is an isometry if $\phi_{[pq]}(v)$ is independent of $q\in
S$. In view of the simply connectedness of $S$ (because $A$ is
simply connected), first we will show that  $F_v$ is a product
locally,
it suffices to show that locally $\phi_{[pq]}(v)$ is independent of
$q\in S$ (all $p$). Precisely, for $x \in S$, there exists
$\varepsilon>0$ ($\varepsilon$ depends on $x$), such that for any
$y$, $z\in B(x,\varepsilon)$, $g=\phi_{[zx]}\circ
\phi_{[yz]}\circ\phi_{[xy]}=id$.

If $\Sigma_1^x=\{v\}$, then $\Ua_x^{\po}=\Sigma_1^x$. Thus each
point in $S$ has just one special normal ray to $S$, clearly $g=id$.

For other cases we will argue by contradiction. Suppose that for a
sequence $ \varepsilon_i\to 0$, there exist $y_i,z_i\in
B(x,\varepsilon_i)$, $g_i=\phi_{[z_ix]}\circ
\phi_{[y_iz_i]}\circ\phi_{[xy_i]}\neq id$.

If $\Sigma_1^x=\{v_1,v_2\}$ with $|v_1v_2|=\pi$,  then
$g_i(v_1)=v_2$, and by the property of flat totally geodesic strips,
we have that $|\exp_x(v_1)\exp_x(g_i(v_1))|\leq (|xy|+|yz|+|zx|)\leq
4\varepsilon_i$. When $\varepsilon_i\to 0$, we get a contradiction,
since $|\exp_x(v_1)\exp_x(v_2)|>0$, or geodesic branches.

If $\Sigma_1^x=S^1(r)$ with $r\leq 1$, by Lemma 2.4 below, we have
that every $g_i$ is the restriction of an isometry, $\bar g_i:S^1\to
S^1$, which is a rotation or a reflection. By passing to a
subsequence, we can suppose that every $\bar g_i$ is a rotation or
every $\bar g_i$ is a reflection.

(a): Every $\bar g_i$ is a rotation.

For $v\in\Ua_x^{\po}$,
$|\exp_x(tv)\exp_x(g_i(tv))|\to 0$, for any $t\geq 0$,  as
$\varepsilon_i\to 0$, which can be seen in the above case. Hence
$|vg_i(v)|\to 0$, i.e. $|v\bar g_i(v)|\to 0$, as $\varepsilon_i\to
0$. Then by the closeness of $\Uparrow_x^{\po}$, we can get that
$\Uparrow_x^{\po}=S^1$. This is a contradiction, since
$\mathscr{F}\neq A$ and Lemma 2.2 imply that
$\Ua_q^{\partial\Omega_c}\neq S^1,$ for any $q\in S$.

(b): Every $\bar g_i$ is a reflection.

By passing to a subsequence, we can assume that $\bar g_i\to h$,
which is also a reflection. Observe that there is $v\in\Ua_x^{\po}$,
such that $v\neq h(v)$, or $g_i$ will be equal to $id$, a
contradiction.
Similarly, we have that $|\exp_x(tv)\exp_x(g_i(tv))|\to 0$, for any
$t\geq 0$, as $\varepsilon_i\to 0$. Hence $|vg_i(v)|\to 0$, i.e.
$|v\bar g_i(v)|\to 0$, as $\varepsilon_i\to 0$. 
Thus we have that $v=h(v)$, a contradiction.

If $\Sigma_1^x=[ab]$, likewise by Lemma 2.4, each $g_i$ just can be
the restriction of the reflection of $[ab]$. Similarly as above, we
can get the conclusion.

Then 
$F_v|_{B(x,\varepsilon)}$ is a product. (We call $B(x,\varepsilon)$
a local product neighborhood of $x$.)

Finally, we will show that $F_v$ is a product globally. For any $
q,r\in S$, and for three fixed geodesics $[xq],[qr]$ and $[rx]$, let
$\gamma=[xq]\cup [qr]\cup [rx]$. Since $\pi_1(S)=0$, $\gamma$ is
homotopic to a point. Let $H:[0,1]\times [0,1]\to S$ be a homotopy,
with $H(t,1)=\gamma(t),H(t,0)=x,H(0,s)=x$ and $H(1,s)=x$. Let
$\{s_0=0<s_1<\cdots <s_n=1\}$ and $\{t_0=0<t_1<\cdots<t_n=1\}$ be
two partitions of $[0,1]$, such that
$H([s_i,s_{i+1}]\times[t_j,t_{j+1}])\subset U_z$; a closed convex
neighborhood of some $z\in S$ which is contained in the local
product neighborhood of $z$. Let $\sigma_i=\cup_{j=0}^{n-1}
[H(s_i,t_j)H(s_i,t_{j+1})]$. There are corresponding $g_i$. We can
see that $g_i(w)$, for any $w\in \Uparrow_x^{\po}$, are the same for
any $i$. It follows that $g_1(w)=w$, i.e., $g_1=id$. Hence
$\phi_{[rx]}\circ \phi_{[qr]}\circ\phi_{[xq]}=id$. Thus we can get
that $\psi$ is an isometric map from $S\times \mt R_+^1$ to $F_v$.
\end{proof}
\begin{lemma}
Let $M\overset{\rm{homeo}}{\cong}S^1$ or an interval and with
intrinsic metric, let $N\subset M$ be a subset and let $g: N\to N$
be an isometry, where $N$ is with the restricted metric. Then $g$
can be extended to an isometry $\bar g:\ M\to M$.
\end{lemma}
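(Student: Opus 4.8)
The plan is to treat the two possibilities for $M$ in parallel, exploiting that in each case the isometry group of $M$ is small and rigid enough that a global isometry is pinned down by the images of two suitably chosen points: for an interval the only isometries are the identity and at most one reflection, and for a circle they are the rotations and reflections. Using the arclength parametrization one realizes $M$ isometrically either as an interval of $\mathbb R$ with the usual metric, or as $\mathbb R/\ell\mathbb Z$ with the shorter-arc metric. Since a distance-preserving bijection extends continuously and bijectively to the closure, I first replace $N$ by $\overline N$ and $g$ by its extension, so we may assume $N$ is closed. If $N$ has at most one point the statement is trivial, so assume $N$ has at least two points.

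Fix $p\in N$ and set $q=g(p)$. There are exactly two isometries of the ambient line (resp. circle) carrying $p$ to $q$: an orientation-preserving one $\rho_0$ (a translation, resp. a rotation) and an orientation-reversing one $r_0$ (a reflection). For each $x\in N$, the point $g(x)$ lies at distance $d(x,p)$ from $q$, hence $g(x)\in\{\rho_0(x),r_0(x)\}$, these being the at most two points at that distance from $q$. The crux is that this binary choice is uniform over $N$: if $g(x)=\rho_0(x)\ne r_0(x)$ and $g(y)=r_0(y)\ne\rho_0(y)$ for some $x,y$, then writing $d(g(x),g(y))=d(x,y)$ in the arclength coordinate gives $(x+y-2p)^2=(x-y)^2$ in the interval case, i.e. $(x-p)(y-p)=0$, so $x=p$ or $y=p$; the circle case is the same after passing to the universal cover, except that one of $x,y$ may also be the antipode $p+\tfrac{\ell}{2}$ — the only further point at which $\rho_0$ and $r_0$ agree, or at which $g(x)$ is forced. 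Hence $g=\rho_0|_N$ or $g=r_0|_N$.

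It remains to check that the selected global isometry preserves $M$, so that its restriction $\bar g$ is the desired extension. On the circle this is automatic, since every rotation and reflection of $\mathbb R/\ell\mathbb Z$ maps $M$ onto itself. In the interval case, $\rho_0$ is a translation $x\mapsto x+c$ carrying the closed bounded set $N$ onto itself, which forces $c=0$ and $\rho_0=\mathrm{id}$; and $r_0$ is a reflection $x\mapsto c'-x$ carrying $N$ onto $N$, which forces $c'=\min N+\max N$, so $r_0$ is exactly the unique nontrivial isometry of $[\min N,\max N]$. In the situations where the lemma is applied one has $\Sigma_1^p=[ab]$ with $a,b\in\overline N$ (see Proposition (2.1.4)), so $M=[\min N,\max N]$ and $r_0$ is an isometry of $M$; more generally $r_0$ restricts to an isometry of $M$ as soon as $\overline N$ contains the endpoints of $M$. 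Either way $\bar g$ is an isometry of $M$ restricting to $g$.

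I expect the main obstacle to be the uniformity step in the circle case: the "two points at distance $d(x,p)$ from $q$" degenerates to a single point when $d(x,p)\in\{0,\ell/2\}$, and antipodal pairs in $N$ must be tracked carefully to rule out an inconsistent choice of $\rho_0$ versus $r_0$ across $N$. This, together with the endpoint bookkeeping in the interval case, is where one genuinely uses the metric structure of $M$ (a round circle or a straight interval), not merely its topology.
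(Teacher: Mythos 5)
Your proof is correct, but it proceeds differently from the paper's. The paper fixes two non-antipodal points $v,w\in N$, observes that a point of the circle is uniquely determined by its distances to $v$ and $w$, and defines $\bar g(u)$ directly as the unique point with $|\bar g(u)g(v)|=|uv|$, $|\bar g(u)g(w)|=|uw|$ (the degenerate case being $N$ a single point or an antipodal pair); the interval case is dismissed with ``similarly.'' You instead classify the ambient isometries carrying $p$ to $g(p)$ into the orientation-preserving $\rho_0$ and orientation-reversing $r_0$, show via the identity $(x-p)(y-p)=0$ that the choice between them cannot be mixed over $N$, and conclude $g=\rho_0|_N$ or $g=r_0|_N$. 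The two arguments buy essentially the same thing on the circle, where every rotation and reflection automatically preserves $M$. The real dividend of your route is the interval case: there the extension must also map $M$ into $M$, and as you correctly observe this can fail --- e.g.\ $M=[0,10]$, $N=\{1,2\}$, $g$ the swap, whose only ambient extension is $x\mapsto 3-x$, which does not preserve $M$ --- so the lemma as literally stated is false for intervals unless $\overline N$ is suitably positioned in $M$. The paper's ``similarly for an interval'' silently assumes that the point determined by the two prescribed distances lies in $M$, which is exactly the issue. Your remark that in every application $\Sigma_1^x=[ab]$ with $a,b\in\Uparrow_x^{\partial\Omega_c}=N$ (via Proposition (2.1.4) and Lemma 2.7), so that $M=[\min N,\max N]$ and the reflection does preserve $M$, is the right repair and shows the uses of the lemma in the proof of Key Lemma 0.7 are unaffected. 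Minor points: your reduction to $\overline N$ requires $g$ to be surjective onto $N$ (which is what ``isometry'' means here) for the translation/reflection parameters to be pinned down by $\min N+\max N$, and your tracking of the antipode of $p$ on the circle (where $\rho_0$ and $r_0$ coincide, so no inconsistency can arise there) is handled correctly.
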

\begin{proof}
We just show the case of $M\overset{\rm{homeo}}{\cong}S^1$,
similarly for an interval.

If there exist $v,w\in N$ such that $v,w$ are not antipodal, then
for any $u\in M$, $u$ is uniquely determined by $|uv|,|uw|$. Thus
$g$ is uniquely determined by $g(v),g(w)$. Hence $g$ can be extended
to $\bar g$, by $\bar g(u)=x$, where $x$ is the unique point such
that $|xg(v)|=|uv|$ and $|xg(w)|=|uw|$.

If not, then $N=\{v,w\}$ with $v,w$ antipodal. Clearly $g$ is
extendable.
\end{proof}
Next we will show
\begin{lemma}
For every $x\in F_u$ and every $y\in F_u$, $[xy]\subset F_u$.
\end{lemma}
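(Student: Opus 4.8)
The plan is to use Key Lemma 0.7 — namely that $F_u$, with the restricted metric, is isometric to the metric product $S\times\mathbb R^1_+$ — together with the convexity of $S$ in $A$ and the fact that the strips making up $F_u$ are totally geodesic in $A$. Concretely, write $\psi\colon F_u\to S\times\mathbb R^1_+$ for the isometry produced by Key Lemma 0.7, where $\psi\bigl(\exp_q(t\,\phi_{[pq]}(u))\bigr)=(q,t)$. Given $x,y\in F_u$, set $\psi(x)=(q_1,t_1)$, $\psi(y)=(q_2,t_2)$. In the product $S\times\mathbb R^1_+$ a minimal geodesic from $(q_1,t_1)$ to $(q_2,t_2)$ is of the form $s\mapsto(\gamma(s),\ell(s))$ with $\gamma$ a minimal geodesic of $S$ from $q_1$ to $q_2$ and $\ell$ affine; since $S$ is convex in $A$ such a $\gamma$ exists inside $S\subset F_u$. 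The pullback $\psi^{-1}$ of this product geodesic is then a curve in $F_u$ whose length equals $|xy|_{F_u}$.

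The remaining point — and the one I expect to be the only real content — is that this curve is in fact a minimal geodesic of the \emph{ambient} space $A$, not merely of $F_u$ with its intrinsic metric; equivalently, that the inclusion $F_u\hookrightarrow A$ is distance-preserving along such curves. Here is where the totally geodesic strips enter: the curve $\psi^{-1}(\gamma(s),\ell(s))$ lies in the union of the flat totally geodesic strips spanned by a special normal ray and the minimal geodesic $\gamma\subset S$ from $q_1$ to points of $S$; more precisely it sits inside one flat totally geodesic rectangle (or a finite union of such, glued along the construction in the proof of Lemma 1.1). On a flat totally geodesic rectangle $R\subset A$, distances between points of $R$ are computed by the Euclidean metric of $R$, which is exactly the product metric; and "totally geodesic" gives that the Euclidean geodesic of $R$ is a geodesic of $A$. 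Hence $|xy|_A=|xy|_{F_u}$ and the pulled-back segment realizes it, so $[xy]\subset F_u$.

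So the steps, in order, are: (1) invoke Key Lemma 0.7 to identify $F_u\cong S\times\mathbb R^1_+$ isometrically via $\psi$; (2) use convexity of $S$ in $A$ to produce, for $x,y\in F_u$, a product geodesic in $S\times\mathbb R^1_+$ joining $\psi(x)$ to $\psi(y)$, and pull it back to a curve $c\subset F_u$; (3) observe that $c$ lies in a flat totally geodesic rectangle (or finite glued union) of $A$ spanned by a special normal ray and a minimal geodesic of $S$, exactly as in the construction preceding Lemma 1.1, and that on such a rectangle the ambient metric of $A$ restricts to the Euclidean product metric; (4) conclude $c$ is a minimal geodesic of $A$ from $x$ to $y$ lying in $F_u$, so $[xy]\subset F_u$. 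The main obstacle is step (3): one must check carefully that the product geodesic of $\psi(x),\psi(y)$ does not leave the part of $S\times\mathbb R^1_+$ that is covered by the explicitly constructed strips, and that the several strips one may pass through glue to a genuinely totally geodesic flat region of $A$ — but both facts are already implicit in the proof of Lemma 1.1 and in the construction of $\psi$, so only bookkeeping is needed.
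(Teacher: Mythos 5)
There is a genuine gap, and it is in what the lemma actually asserts rather than in any single step of your construction. Your argument produces \emph{one} minimal geodesic of $A$ from $x$ to $y$ lying in $F_u$: given Key Lemma 0.7 (with the \emph{restricted} metric, so that $\psi$ preserves ambient distances), the pullback of a product geodesic is a curve in $F_u$ of length $|xy|_A$, and your step (3) is then not even needed. But the lemma, as the paper uses it, is the statement that \emph{every} ambient minimal geodesic between two points of $F_u$ is contained in $F_u$. This is visible in the paper's own proof, which assumes ``there exists $[xy]$ which doesn't belong to $F$'' and derives a contradiction, and it is forced by the later applications: Lemma 2.10 is invoked in the proof of Lemma 2.8 with $Y=F_{v_1}$, and its hypothesis is ``for $x,y\in Y$, \emph{any} $[xy]\subset Y$''; the separation and convexity arguments around Lemmas 2.11--2.13 likewise need that no minimal geodesic between points of $F$ can stray off $F$. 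In an Alexandrov space two points may be joined by several minimal geodesics, so exhibiting one inside $F_u$ does not exclude another one leaving it; your approach has no mechanism to rule this out.

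The paper's mechanism is a rigidity argument: if some $[xy]$ left $F$, then at a suitable point one gets an angle equality $\angle(\ua_y^x,\ua_y^{\bar y})=\tilde\angle(x,y,\bar y)$, so Lemma 2.6 produces a \emph{second} flat totally geodesic triangle with a side in $S$, contradicting the join structure $[\Sigma_0^{\bar y}\Sigma_1^{\bar y}]=\Sigma_0^{\bar y}*\Sigma_1^{\bar y}$ of Proposition (2.1.3) (which is also what gives uniqueness of the strips). To repair your proof you would need to add exactly this uniqueness/rigidity step; the existence half you prove is correct but is the easy half.
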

In the proof of Lemma 2.5, we need the following lemma.
\begin{lemma}[{\rm{[14, 2.4(\romannumeral2')]}}]
Let $X\in \Alex^m(\kappa)$. For two minimal geodesics $[xz]$ and
$[xy]$, if $\angle(\uparrow_x^z,\uparrow_x^y)=\tilde \angle(z,x,y)$,
then there is a $[zy]$ such that $[xz],[xy],[zy]$ bound a totally
geodesic surface which is isometric to a geodesic triangle in
$S^2(\kappa)$.
\end{lemma}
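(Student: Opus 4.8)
The statement is the classical rigidity (``flat triangle'') lemma that accompanies the angle comparison in $\Alex^m(\kappa)$, and I would follow the standard route. Write $\phi=\angle(\uparrow_x^z,\uparrow_x^y)$ and let $\tilde\triangle=\tilde x\tilde y\tilde z\subset S^2(\kappa)$ be a comparison triangle for $(x,y,z)$ — it exists since $X\in\Alex^m(\kappa)$, and for $\kappa>0$ the perimeter is $<2\pi/\sqrt{\kappa}$ so $\tilde\triangle$ is genuine; I use the same symbol for the filled triangular region. By hypothesis the angle of $\tilde\triangle$ at $\tilde x$ equals $\phi$. If $\tilde\triangle$ is degenerate then $\phi\in\{0,\pi\}$: $\phi=0$ forces $y=z$ and $\phi=\pi$ forces $[zx]\cup[xy]$ to be a minimal geodesic $[zy]$, so in both cases the conclusion holds with a degenerate surface; hence I may assume $0<\phi<\pi$. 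Fixing once and for all a minimal geodesic $[zy]$, the whole proof reduces to the \emph{distance identity}: for every $w\in[zy]$ one has $|xw|=|\tilde x\tilde w|$, where $\tilde w\in[\tilde z\tilde y]$ is the point with $|\tilde z\tilde w|=|zw|$.

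To prove the identity, the inequality ``$\ge$'' is Toponogov's comparison (globalized, valid in $\Alex^m(\kappa)$) applied to the triangle with apex $x$ and the point $w$ on the opposite side. For ``$\le$'' I would combine two facts. First, the equality $\phi=\tilde\angle(z,x,y)$ forces, for a suitable choice of minimal geodesic $[xw]$, the direction $\uparrow_x^w$ to lie on a minimal geodesic of $\Sigma_x$ joining $\uparrow_x^z$ to $\uparrow_x^y$ — this is the companion statement to the present lemma — and it yields $\angle(\uparrow_x^z,\uparrow_x^w)+\angle(\uparrow_x^w,\uparrow_x^y)=\phi$. Second, gluing the comparison triangles $\tilde\triangle(x,z,w)$ and $\tilde\triangle(x,w,y)$ along their common side of length $|xw|$ and invoking Alexandrov's lemma (legitimate because $|xw|\ge|\tilde x\tilde w|$) gives $\tilde\angle(z,x,w)+\tilde\angle(w,x,y)\ge\tilde\angle(z,x,y)$, with equality iff $|xw|=|\tilde x\tilde w|$. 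Together with the angle comparisons $\angle(\uparrow_x^z,\uparrow_x^w)\ge\tilde\angle(z,x,w)$ and $\angle(\uparrow_x^w,\uparrow_x^y)\ge\tilde\angle(w,x,y)$ this produces the chain
\[
\tilde\angle(z,x,y)=\phi=\angle(\uparrow_x^z,\uparrow_x^w)+\angle(\uparrow_x^w,\uparrow_x^y)\ \ge\ \tilde\angle(z,x,w)+\tilde\angle(w,x,y)\ \ge\ \tilde\angle(z,x,y),
\]
so all inequalities are equalities; in particular $\tilde\angle(z,x,w)+\tilde\angle(w,x,y)=\tilde\angle(z,x,y)$, and the rigidity in Alexandrov's lemma forces $|xw|=|\tilde x\tilde w|$. (The chain also records $\angle(\uparrow_x^z,\uparrow_x^w)=\tilde\angle(z,x,w)$ and $\angle(\uparrow_x^w,\uparrow_x^y)=\tilde\angle(w,x,y)$.)

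Granting the distance identity, I would assemble the surface as follows. Running the same argument with $[zy]$ replaced by the subsegments $[zw]$ and $[wy]$ shows that, choosing for each $w\in[zy]$ the geodesic $[xw]$ as above, distinct $w$'s give distinct geodesics $[xw]$ meeting only at $x$, the directions $\uparrow_x^w$ sweep monotonically a subarc of the $\Sigma_x$-geodesic $[\uparrow_x^z\uparrow_x^y]$, and for all $w,w'$ one has $\angle(\uparrow_x^w,\uparrow_x^{w'})=\tilde\angle(w,x,w')=\angle_{\tilde x}(\tilde w,\tilde x,\tilde w')$. I then define $\iota\colon\tilde\triangle\to X$ in geodesic polar coordinates centred at $\tilde x$: send the point of $\tilde\triangle$ at distance $r$ from $\tilde x$ on the $\tilde x$-ray through $\tilde w$ to the point at distance $r$ from $x$ on $[xw]$. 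By the distance identity, the angle identities just listed, and the first variation formula, $\iota$ is well defined, is a bijection onto $\Phi:=\bigcup_{w\in[zy]}[xw]$, and is distance preserving; hence $\Phi$ with the induced metric is isometric to the geodesic triangle $\tilde\triangle\subset S^2(\kappa)$. Finally $\Phi$ is totally geodesic: for $p,q\in\Phi$ we have $|pq|=|\iota^{-1}(p)\,\iota^{-1}(q)|_{S^2(\kappa)}$, a length realized inside $\Phi$ by the $\iota$-image of the model segment $[\tilde p\tilde q]$, so any minimal geodesic of $X$ from $p$ to $q$ has this same length and must coincide with that curve, hence lies in $\Phi$. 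Taking $[zy]$ itself for the third side finishes the argument.

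The main obstacle is the ingredient used for ``$\le$'' in the distance identity: that equality of the angle with the comparison angle at $x$ propagates to directions, i.e.\ $\uparrow_x^w$ lies on a $\Sigma_x$-geodesic between $\uparrow_x^z$ and $\uparrow_x^y$. That is where the genuine content sits; I would derive it from the monotonicity of comparison angles along hinges together with the law of cosines in $S^2(\kappa)$ — which together force the legs $[xz]$ and $[xy]$ to span an isometric copy of a flat $\kappa$-sector of angle $\phi$ — and it is exactly what the cited reference records. Everything afterwards (Alexandrov's lemma, the polar-coordinate map, the ``distance preserving $\Rightarrow$ totally geodesic'' step) is routine bookkeeping; a minor point when $\kappa>0$ is to keep all comparison triangles within the diameter bound $2\pi/\sqrt{\kappa}$ so that they remain genuine.
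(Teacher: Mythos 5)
First, a point of context: the paper does not prove this lemma at all --- it is quoted verbatim from Grove--Markvorsen as [14, 2.4(ii$'$)] --- so there is no in-paper argument to compare yours with, and your overall architecture (reduce everything to the arm-distance identity $|xw|=|\tilde x\tilde w|$ for $w\in[zy]$, then build the surface in polar coordinates and check it is distance-preserving and totally geodesic) is the standard and correct skeleton for this rigidity statement. The ``$\ge$'' half of the distance identity, the use of Alexandrov's lemma to convert $|xw|$ versus $|\tilde x\tilde w|$ into the sum of comparison angles at $x$ versus $\tilde\angle(z,x,y)$, and the final assembly are all fine.

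The genuine gap is exactly where you flag it, and your proposed repair does not close it. Your chain needs the upper bound $\angle(\uparrow_x^z,\uparrow_x^w)+\angle(\uparrow_x^w,\uparrow_x^y)\le\angle(\uparrow_x^z,\uparrow_x^y)$ for $w\in[zy]$. Every tool you invoke --- angle comparison, monotonicity of $\tilde\angle$ along hinges, the triangle inequality in $\Sigma_x$, Alexandrov's lemma given $|xw|\ge|\tilde x\tilde w|$ --- produces the inequality $\angle(\uparrow_x^z,\uparrow_x^w)+\angle(\uparrow_x^w,\uparrow_x^y)\ge\angle(\uparrow_x^z,\uparrow_x^y)$, i.e.\ the direction you already have for free; none of them bounds an angle at $x$ from above. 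What the hypothesis $\angle_x=\tilde\angle_x$ actually gives via equality in the monotonicity is $\tilde\angle(z_s,x,y_t)\equiv\tilde\angle(z,x,y)$ for all $z_s\in[xz]$, $y_t\in[xy]$, i.e.\ all \emph{cross-distances between the two legs} are the model values. That is a statement about the one-dimensional set $[xz]\cup[xy]$; it is not ``an isometric copy of a flat $\kappa$-sector,'' and in particular it says nothing yet about the opposite side $[zy]$ or about $\uparrow_x^w$. Asserting that the legs span a filled sector containing $[zy]$ is essentially the conclusion of the lemma, so the justification is circular as written.

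The standard way to bridge this is to change apex: from $|zy_t|=|\tilde z\tilde y_t|$ for all interior $y_t\in[xy]$ (which \emph{does} follow from your leg computation) one is in the hypothesis of the distance-rigidity version of the lemma for the triangle with apex $z$ and opposite side $[xy]$. There the equality case of Alexandrov's lemma gives $\tilde\angle(x,y_t,z)+\tilde\angle(z,y_t,y)=\pi$, and combining $\angle(\uparrow_{y_t}^x,\uparrow_{y_t}^z)+\angle(\uparrow_{y_t}^z,\uparrow_{y_t}^y)\ge\pi$ with the fact that the three pairwise angles between directions at $y_t$ sum to at most $2\pi$ forces the individual angle equalities $\angle=\tilde\angle$ at $y_t$; these propagate by a subdivision/induction to yield the full distance identity. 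Some version of this (or of the equivalent statement in [14]) has to be supplied before your polar-coordinate construction can start; once it is, the rest of your write-up goes through, with only the minor caveats that for $\kappa>0$ the degenerate perimeter case $2\pi/\sqrt{\kappa}$ should be excluded or treated separately, and that ``totally geodesic'' should be read in the paper's sense that $\Phi$ \emph{contains} a minimal geodesic between any two of its points (uniqueness of minimal geodesics in $X$ is not available).
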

\begin{proof}[Proof of Lemma 2.5]
Set $\pi(x)=\bar x$ and $\pi(y)=\bar y$. If $x, y\in S$, by the
construction of $S$, we have that $[xy]\subset S$.

For other cases we will argue by contradiction. Suppose that there
exist $x, y\in F$ such that there exists $[xy]$ which doesn't belong
to $F$. Then $[xy]^{\circ}\cap F=\emptyset$, where $[xy]^{\circ}$
denotes $[xy]-\{x,y\}$, or geodesic will branch.

If $x\in S,\, y\in (S,a)$, with $a\neq 0$, let $r\in F_u,r\neq y$ be
a point such that $\pi(r)=\bar y$ and $|r\pi(r)|>|y\bar y|$. Then
$\pi=\angle(\ua_y^x,\ua_y^{\bar y})+\angle(\ua_y^x,\ua_y^r)\geq
\tilde\angle(x,y,\bar y)+\tilde\angle(x,y,r)=\pi$, where
the last equality is from the construction of $F$. Hence
$\angle(\ua_y^x,\ua_y^{\bar y})=\tilde\angle( x,y,\bar y)$. It
follows from Lemma 2.6 that $\{x,y,\bar y\}$ bounds another flat
totally geodesic triangle, which contradicts to the structure of
$\Sigma_{\bar y}A$ (Proposition (2.1.3)), since $[x\bar y]\subset
S$.

If $x, \,y\in(S,a)$, with $a\neq 0$, then the same as above we have
that $\angle (\ua_x^y,\ua_x^{\bar x})=\tilde\angle (y,x,\bar
x)=\frac{\pi}{2}$. Therefore there exists a flat triangle bounded by
$y,x,\bar x$ for the given $[xy]$, with $[y\bar x]^{\circ}\nsubseteq
F$. By the above case, we get a contradiction.

If $x\in(S,a),\ y\in (S,b)$, with $a\neq 0$, $b\neq0$ and $a\neq b$,
without loss of generality, we can assume that $a<b$. Let $s=
[y\bar y]\cap (S,a)$. 
Similarly as the above two cases, we can also get a contradiction.
\end{proof}
As seen following Lemma 1.2, the remaining case in the proof of
Theorem A is that $\mathscr{F}\neq A$ and $\pi_1(A)=0$, which
implies that $\Ua_p^{\po}\neq S^1$.

\subsection{The concavity of $\dist_F$}
\ \\

As seen in the introduction, $F$ is the union of several $F_v $'s.
We point it out that the selection of these $F_v$'s is crucial for
the desired concavity of $\dist_F $; see following for details.

For $p\in S$, by the first variation formula for the Busemann
function, $d_pf(v)=-<\Ua_p^{\po},v>\leq 0$, $v\in\Sigma_p$, we see
that $\Ua_p^{\po}$ is $\frac{\pi}{2}$-dense in $\Sigma_p$ and thus
$\frac{\pi}{2}$-dense in $\Sigma_1^p$.

\begin{lemma}
There is $N'=\{v_i\}_{1\leq i\leq l}\subset \Ua_p^{\po},l\leq 3$,
such that $N'$ is $\frac{\pi}{2}$-dense in $\Sigma_1^p$, and
$\phi_{[pq]}(N')$ is also $\frac{\pi}{2}$-dense in $\Sigma_1^q$, for
any $q\in S$.
\end{lemma}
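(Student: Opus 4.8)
The plan is to use the classification of $\Sigma_1^p$ from Proposition (2.1.2) together with the fact (proved in Lemma 2.2) that $\phi_{[pq]}$ is an isometry onto $\Sigma_1^q$, and handle each of the four possible isometry types of $\Sigma_1^p$ separately. In each case one must exhibit a finite set $N'$ of at most three directions in $\Ua_p^{\po}$ which is $\tfrac\pi2$-dense in $\Sigma_1^p$ and whose image under $\phi_{[pq]}$ stays $\tfrac\pi2$-dense in $\Sigma_1^q$. Since $\phi_{[pq]}$ is an isometry, $\phi_{[pq]}(N')$ is automatically $\tfrac\pi2$-dense in $\phi_{[pq]}(\Sigma_1^p)$, so the real issue is whether $\phi_{[pq]}(\Sigma_1^p)=\Sigma_1^q$, or at least whether $\phi_{[pq]}(N')$ is $\tfrac\pi2$-dense in the possibly larger space $\Sigma_1^q$. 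This is where Proposition (2.1.4) and Remark 2.3 (the suggestion that the isometry class of $\Sigma_1^p$ is independent of $p$) enter.

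First I would dispose of the degenerate cases. If $\Sigma_1^p=\{v\}$, then $\Ua_p^{\po}=\Sigma_1^p=\{v\}$, and one takes $N'=\{v\}$, $l=1$; since $\phi_{[pq]}$ is an isometry, $\Sigma_1^q$ must also be a single point and $N'$ maps onto it. If $\Sigma_1^p=\{v_1,v_2\}$ with $|v_1v_2|=\pi$, then again $\Ua_p^{\po}=\Sigma_1^p$ (the Busemann first-variation relation forces every direction of a minimizing ray into $\Sigma_1^p$, and conversely these two antipodal directions each support a ray by Lemma (1.1.1)), so $N'=\{v_1,v_2\}$, $l=2$, works and $\phi_{[pq]}$ carries it to a pair of antipodal points, which is $\tfrac\pi2$-dense in any $\Sigma_1^q$ of diameter $\le\pi$.

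The substantive cases are $\Sigma_1^p=S^1(r)$ with $r\le 1$ and $\Sigma_1^p=[ab]$. For the circle case, I would pick three directions $v_1,v_2,v_3\in\Ua_p^{\po}$ spaced so that the three arcs they cut off each have length $\le$ the value needed for $\tfrac\pi2$-density — this is possible because $\Ua_p^{\po}$ is $\tfrac\pi2$-dense in $\Sigma_1^p=S^1(r)$ and $S^1(r)$ has circumference $\le 2\pi$, so three well-chosen points in a $\tfrac\pi2$-dense subset suffice; then $l=3$. Since $\phi_{[pq]}$ is an isometry, $\phi_{[pq]}(\Sigma_1^p)$ is a round circle of the same radius inside $\Sigma_1^q$; by Proposition (2.1.2) the only way a round $S^1(r)$ can sit isometrically inside one of the listed spaces is if $\Sigma_1^q$ is itself that same $S^1(r)$, so $\phi_{[pq]}(\Sigma_1^p)=\Sigma_1^q$ and density is preserved. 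For the interval case $\Sigma_1^p=[ab]$, I would take $N'$ to contain the two endpoints $a,b$ together with, if necessary, one interior point, so $l\le 3$; the endpoints are in $\Ua_p^{\po}$ by the endpoint clause of Proposition (2.1.4) applied with $E$ the closure of $\Ua_p^{\po}\cap\Sigma_1^p$ (which satisfies $B(E,\tfrac\pi2)=\Sigma_pA$ by the first-variation $\tfrac\pi2$-density of $\Ua_p^{\po}$ in all of $\Sigma_p$). Then $\phi_{[pq]}$ maps $[ab]$ isometrically onto a segment of the same length in $\Sigma_1^q$; by Proposition (2.1.2), $\Sigma_1^q$ containing an isometric copy of a segment of that length forces $\Sigma_1^q=[a'b']$ with $\phi_{[pq]}(\{a,b\})=\{a',b'\}$ the endpoints, so the image of $N'$ is again $\tfrac\pi2$-dense.

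The main obstacle I expect is the interval case, specifically justifying that $\phi_{[pq]}([ab])$ must be all of $\Sigma_1^q$ rather than a proper subsegment — equivalently, ruling out that $\Sigma_1^q$ is a strictly longer interval than $\Sigma_1^p$. This is exactly the content hinted at in Remark 2.3, and it is the one step where one genuinely needs the finer information in Proposition (2.1.4) (the second clause, about points within $\tfrac\pi2$ of the midpoint lying in $[\Sigma_0^p\Sigma_1^p]$, combined with the join structure 2.1.3) rather than just the isometry statement of Lemma 2.2. I would prove it by a symmetry/doubling argument: applying $\phi_{[qp]}$ back gives an isometric embedding of $\Sigma_1^q$ into $\Sigma_1^p$, and iterating $\phi_{[pq]}\circ\phi_{[qp]}$ along a loop forces the two intervals to have equal length, hence the embedding is onto. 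If this doubling argument does not close cleanly, the fallback is to weaken the claim to "$\phi_{[pq]}(N')$ is $\tfrac\pi2$-dense in $\Sigma_1^q$" directly, using Proposition (2.1.4)'s first clause with $E=\phi_{[pq]}(N'\cap\Sigma_1^p)$ to pin down the endpoints of $\Sigma_1^q$ among $\phi_{[pq]}(N')$, which is all that Lemma 2.8 will actually require downstream.
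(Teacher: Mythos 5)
Your selection of $N'$ in each of the four cases of Proposition (2.1.2) matches the paper's (take all of $\Uparrow_p^{\partial\Omega_c}$ in the two discrete cases, the endpoints $a,b$ via Proposition (2.1.4) in the interval case, and up to three suitably spaced points of $\Uparrow_p^{\partial\Omega_c}$ in the circle case), and the degenerate cases are handled correctly. The gap is in the second half, the $\tfrac\pi2$-density of $\phi_{[pq]}(N')$ in $\Sigma_1^q$. Your argument there rests on statements like ``$\phi_{[pq]}(\Sigma_1^p)$ is a round circle of the same radius inside $\Sigma_1^q$'' and ``$\phi_{[pq]}$ maps $[ab]$ isometrically onto a segment of the same length in $\Sigma_1^q$.'' But $\phi_{[pq]}$ is defined only on $\Uparrow_p^{\partial\Omega_c}$ (a direction must support a special normal ray before a flat strip can be spanned), and in the relevant regime $\mathscr{F}\neq A$ one has $\Uparrow_p^{\partial\Omega_c}\subsetneq\Sigma_1^p$ in general; so $\phi_{[pq]}(\Sigma_1^p)$ and $\phi_{[pq]}([ab])$ are not defined, and Lemma 2.2 gives you an isometric copy of $\Uparrow_p^{\partial\Omega_c}$ in $\Sigma_1^q$, not of $\Sigma_1^p$. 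Even granting an isometric copy, the rigidity you invoke fails: a segment of length $|ab|<\pi$ embeds isometrically into a longer segment or into a circle, and three points of $S^1(r)$ with prescribed pairwise (chordal-arc) distances can sit in a clustered, non-dense position inside a larger $S^1(r')$. You flag the interval difficulty honestly, but neither proposed fix closes it: the doubling argument again presupposes $\phi$ defined on all of $\Sigma_1^q$, and the fallback application of Proposition (2.1.4) at $q$ with $E=\phi_{[pq]}(N')$ requires $B(\phi_{[pq]}(N'),\tfrac\pi2)=\Sigma_qA$, which is essentially the conclusion you are trying to prove.

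The paper closes this step differently, by contradiction and entirely inside $\Uparrow_p^{\partial\Omega_c}$ and $\Uparrow_q^{\partial\Omega_c}$: every $w\in\Uparrow_p^{\partial\Omega_c}$ satisfies a betweenness identity with respect to $N'$ (e.g.\ $|wa|+|wb|=|ab|$ in the interval case, and $|wv|+|wv_i|=|vv_i|$ for $i=1$ or $2$ in the circle case), these identities are preserved by the isometry $\phi_{[pq]}|_{\Uparrow_p^{\partial\Omega_c}}$ of Lemma 2.2, and if $\phi_{[pq]}(N')$ failed to be $\tfrac\pi2$-dense in $\Sigma_1^q$ then the $\tfrac\pi2$-density of $\Uparrow_q^{\partial\Omega_c}$ in $\Sigma_1^q$ (first variation of the Busemann function at $q$) would produce a point $\bar w=\phi_{[pq]}(w)$ violating them. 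To repair your write-up you should replace the ``image of $\Sigma_1^p$'' reasoning with an argument of this type, or else first prove that $\phi_{[pq]}(\Uparrow_p^{\partial\Omega_c})=\Uparrow_q^{\partial\Omega_c}$ and apply Proposition (2.1.4) at $q$ with $E=\Uparrow_q^{\partial\Omega_c}$.
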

\begin{proof}
First for the selection of $N'$: if $\Sigma_1^p=\{v\}$, then
$\Ua_p^{\po}=\Sigma_1^p$.  Let $N'=\Uparrow_p^{\po}$.

If $\Sigma_1^p=\{v_1,v_2\}$ with $ |v_1v_2|=\pi$, then $v_1,v_2\in
\Ua_p^{\po}$. Indeed, since $ \Ua_p^{\po}\neq\emptyset$, one of
them, say $v_1$, must be in $ \Ua_p^{\po}$. Suppose that $v_2\not
\in \Ua_p^{\po}$, then $d_pf(v_2)=1>0$, a contradiction. Let
$N'=\Uparrow_p^{\po}$.


If $\Sigma_1^p=[ab]$,  since $B(\Ua_p^{\po},\frac\pi2)=\Sigma_pA$,
by Proposition (2.1.4), we have that $a,b\in \Ua_p^{\po}$. Let
$N'=\{a,b\}$. Then $N'$ is obviously a $\frac{\pi}{2}-$dense subset
of $\Sigma_1^p$.

If $\Sigma_1^p=S^1(r)$ with $r\leq 1$, we choose $v\in \Ua _p^{\po}$
arbitrarily, consider the antipodal point of $v$, denoted by $w$. If
$w\in\Ua_p^{\po}$, let $N'=\{v,w\}$. If $w\not \in\Ua_p^{\po}$, let
$v_1,v_2\in\Ua_p^{\po}$ be the farthest points to $w$ from both
sides respectively in $\Sigma_1^p$, it follows that $v,v_1,v_2$ ($v$
may be equal to $v_1$ or $v_2$) form a $\frac{\pi}{2}-$dense subset
of $\Sigma_1^p$, let $N'=\{v,v_1,v_2\}$.

For the second part of the lemma: if $\Sigma_1^p=\{v\}$ or
$\{v_1,v_2\}$, by Lemma 2.2 one can deduce that $N'$ has the desired
property.

If $\Sigma_1^p=[ab]$, suppose that there is $q\in S$ such that
$\phi_{[pq]}(N')$ is not a $\frac{\pi}{2}-$dense subset of
$\Sigma_1^q$. then there is $w\in\Ua_p^{\po}$ such that
$|\phi_{[pq]}(w)\phi_{[pq]}(a)|+|\phi_{[pq]}(w)\phi_{[pq]}(b)|>|ab|$,
a contradiction, since by Lemma 2.2, $\phi_{[pq]}$ is an isometry
when restricted to $\Ua_p^{\po}$.

If $\Sigma_1^p=S^1(r)$ with $r\leq 1$, then from the choosing
method, we have that for any $u\in\Ua_p^{\po}$, either
$|uv|+|uv_1|=|vv_1|$, or $|uv|+|uv_2|=|vv_2|$. Suppose that there is
$q\in S$ such that $\phi_{[pq]}(N')$ is not a $\frac{\pi}{2}-$dense
subset of $\Sigma_1^q$. Then there is $w\in\Ua_p^{\po}$ such that
$|\phi_{[pq]}(w)\phi_{[pq]}(v)|+|\phi_{[pq]}(w)\phi_{[pq]}(v_1)|>|vv_1|$,
and
$|\phi_{[pq]}(w)\phi_{[pq]}(v)|+|\phi_{[pq]}(w)\phi_{[pq]}(v_2)|>|vv_2|$,
a contradiction.

Thus we finish the proof of the lemma.
\end{proof}

\begin{lemma}
Let $F=\cup_{v\in N'} F_{v}$. The distance function, $\dist_F$, is
concave in $D=X\backslash F$.
\end{lemma}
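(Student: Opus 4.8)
The plan is to show that $\dist_F$ is concave on $D = A \backslash F$ by verifying the standard infinitesimal criterion: along every unit-speed geodesic $\delta(s)$ in $D$, the function $h(s) = \dist_F(\delta(s))$ satisfies $h'' \le 0$ in the barrier sense. Since $F$ is the union of the sets $F_{v_i}$, which by Key Lemma 0.7 each split isometrically as $S \times \mathbb{R}^1_+$, and since by Lemma 2.5 each $F_{v_i}$ is convex, the first task is to understand the structure of $F$ itself. I would first argue that $F$ is convex in $A$: given $x, y \in F$, each lies in some $F_{v_i}$, and using Lemma 2.5 together with the fact that $\pi(x), \pi(y) \in S$ are joined by a minimal geodesic lying in $S \subset F_{v_i}$ for every $i$, one shows $[xy] \subset F$ by the same branching argument as in the proof of Lemma 2.5. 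Once $F$ is a closed convex subset of the nonnegatively curved space $A$, the concavity of $\dist_F$ on $A \backslash F$ is a standard fact (it is exactly the statement that distance to a convex set is concave in nonnegative curvature), provided $F$ has no boundary issues that create a positive-curvature obstruction.

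The subtle point — and the reason the particular choice $N' = \{v_i\}_{1 \le i \le l}$ from Lemma 2.7 matters — is that $\dist_F$ fails to be concave near points where the nearest-point projection to $F$ is not controlled, i.e. where the comparison for the first variation of $\dist_F$ degenerates. Concretely, for $x \in D$ and a minimal geodesic $[x\hat x]$ with $\hat x \in F$, $|x\hat x| = |xF|$, one needs $\angle(\ua_{\hat x}^x, \, w) \ge \pi/2$ for all $w \in \Sigma_{\hat x} F$; then the usual argument (via Toponogov comparison, as in \cite{[Pet2]}) gives $(\dist_F \circ \delta)'' \le 0$. This angle condition is where the $\frac\pi2$-density of $\phi_{[p\hat p]}(N')$ in $\Sigma_1^{\hat p}$ enters: writing $\hat x$ as a point at height $a$ over $\hat p \in S$ on the strip $F_{v_i}$, the space of directions $\Sigma_{\hat x} F$ is governed by $[\Sigma_{\hat p} S \,*\, \{\text{the }\phi_{[p\hat p]}(v_i)\}]$, and Lemma 2.7 guarantees these directions are $\frac\pi2$-dense in $\Sigma_1^{\hat p}$, which by Proposition (2.1.3) (the join structure $[\Sigma_0^{\hat p}\Sigma_1^{\hat p}] = \Sigma_0^{\hat p} * \Sigma_1^{\hat p}$) forces $\ua_{\hat x}^x$ to make angle $\ge \pi/2$ with all of $\Sigma_{\hat x} F$. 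I would carry this out by passing to the space of directions $\Sigma_{\hat x} A$, decomposing it via the join structure, and checking the angle inequality there.

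I expect the main obstacle to be the analysis at points $\hat x$ lying over $\hat p \in S$ with $a = 0$, i.e. when the foot point $\hat x$ lies on $S$ itself rather than in the interior of a strip, and more generally handling the non-uniqueness of the foot point $\hat x$ and of the strips through it. In these cases the clean product picture $S \times \mathbb{R}^1_+$ is not directly available at $\hat x$, and one must instead use that $\Sigma_1^{\hat p}$ is one of the four model spaces from Proposition (2.1.2) together with the $\frac\pi2$-density of $N'$ (Lemma 2.7) and the join decomposition (Proposition (2.1.3), (2.1.4)) to rule out a direction in $\Sigma_{\hat x} F$ making angle $< \pi/2$ with $\ua_{\hat x}^x$. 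The case $\Sigma_1^{\hat p} = [ab]$ with $|ab| = \pi$ is the delicate one, and this is precisely why the second half of Proposition (2.1.4) — that every $x \in \Sigma_{\hat p} A$ with $|ox| \le \pi/2$ lies in $[\Sigma_0^{\hat p}\Sigma_1^{\hat p}]$ — is needed to close the argument.
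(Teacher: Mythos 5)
Your reduction does not work, for two reasons. First, $F=\cup_{v\in N'}F_v$ is not convex: each individual $F_{v_i}$ is convex (Lemma 2.5), but the strips meet only along $S$, and a minimal geodesic between a point of $F_{v_1}\setminus S$ and a point of $F_{v_2}\setminus S$ with $v_1\neq v_2$ will in general leave $F$ --- already in the infinitesimal model $T_pS\times C(\Sigma_1^p)$ the set $F$ looks like $T_pS\times(\text{two or three rays})$, whose chords are not contained in it. Second, and more seriously, even for a closed convex set $C$ the function $\dist_C$ need not be concave in nonnegative curvature (take $C$ a single point in $\mathbb R^2$: the distance function is convex); concavity of a distance function is a feature of sets that locally behave like boundaries, not of convex sets per se. Since $F$ has codimension one in $A$, the correct strategy --- the one the paper follows --- is to adapt the proof of concavity of $\dist_{\partial X}$, which requires verifying two conditions at a foot point $x\in F$ of $q\in D$: (\romannumeral1) for the direction $v=\ua_x^{\gamma(t)}$ with $|v\ua_x^q|<\frac\pi2$ there exists $\bar w\in\Sigma_xF$ with $|\ua_x^q\bar w|=|\ua_x^qv|+|v\bar w|=\frac\pi2$, and (\romannumeral2) there is a radial curve in $F$ issuing from $x$ with initial direction $\bar w$.

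The angle condition you state, $\angle(\ua_x^q,w)\ge\frac\pi2$ for all $w\in\Sigma_xF$, is automatic from the first variation formula (because $x$ realizes $|qF|$) and carries no information; what is actually needed is the exact equality of condition (\romannumeral1), achieved in a direction aligned with $v$, and it is there that the $\frac\pi2$-density of $\phi_{[px]}(N')$ from Lemma 2.7 together with Proposition (2.1.3)--(2.1.4) enters when $x\in S$, while for $x\notin S$ one instead uses that $\Sigma_xF=S(\Sigma_xS)$ separates $\Sigma_xA$ and Lemma 4.19. So your instinct about where Lemma 2.7 and the join structure are used is right, but they are deployed on the wrong inequality. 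Condition (\romannumeral2) is entirely absent from your proposal: one must show that $F$ genuinely extends from $x$ in the direction $\bar w$ by a radial curve remaining in $F$, which the paper establishes via Lemma 2.10 and by showing that radial curves in $F_{v_1}$ do not approach $\partial F_{v_1}=(S,0)$. Without (\romannumeral2) the comparison argument cannot be run, so the proposal has a genuine gap beyond the fixable misstatement of (\romannumeral1).
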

\begin{remark}
Observe that if the boundary points of each component $D_i$ are
``true'' boundary points, i.e., which are not interior points in the
closure $\bar D_i$ and that $\bar D_i$ is convex, then it follows
that $\dist_F$ is concave in $D$. In our case, we show that even if
a component of $D$ may not be convex, $\dist_F$ is still concave.
For example: let $T=\{(x,y)\in\mathbb R^2\ |\ x\geq 0,y\geq 10
x\}\cup\{(x,y)\ |\ x\leq 0,y\geq x^2\}$, the metric product,
$A=Doub(T)\times S^2(1)\in\Alex ^4(0)$. Then $F=Doub(\{x=0,y\geq
0\})$ serves an example.
\end{remark}
In the proof of Lemma 2.8, we need the following lemma which is an
analogue to the totally geodesic property in Riemannian geometry.
(Lemma 2.10 below is from a helpful discussion with Shicheng Xu.)

\begin{lemma}
Let $X\in \Alex ^n(\kappa)$, and let $Y$ be a closed subset of $X$
such that for $x,y\in Y$, any $[xy]\subset Y$. Then for any $p\in Y$
and $q\in Y\backslash
\partial Y$, we have that $\nabla _q \dist_p\in T_qY$.
\end{lemma}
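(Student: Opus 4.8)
The plan is to prove that $\nabla_q \dist_p \in T_qY$ by exploiting the fact that $Y$ is ``totally geodesic'' in the weak sense (it contains all minimal geodesics between its points) together with the first variation formula for the gradient of a distance function. Recall that $\nabla_q\dist_p$ is characterized (see \cite{[Pet2]}) as the unique vector $w\in T_qX$ such that $\langle w,\xi\rangle\le d_q\dist_p(\xi)$ for all $\xi\in\Sigma_qX$, with $|w|^2=d_q\dist_p(\nabla_q\dist_p)$; equivalently, it is the point of the set $\Uparrow_q^p\subset\Sigma_qX$ scaled appropriately — more precisely, $\nabla_q\dist_p = (\cos r)\,u$ where $u$ realizes $\max_{w\in\Sigma_q}\{|w\,\Uparrow_q^p|\le \pi/2 \text{ slot}\}$... rather than chase the exact normalization, the key geometric input is: the gradient direction is, up to scaling, the direction $u\in\Sigma_qX$ maximizing $\min\{\langle u,\xi\rangle : \xi\in\Uparrow_q^p\}$ (the ``most forward'' direction toward $p$), and this is $0$ precisely when $|\,\Uparrow_q^p\,,\,\Sigma_qX\,|$ is already $\ge\pi/2$ in all directions.

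The first step is to identify the relevant subcone. Since $q\notin\partial Y$, a neighborhood of $q$ in $Y$ is genuinely $(\dim Y)$-dimensional, and I claim $\Sigma_qY$ is an extremal-type (in any case locally convex, boundaryless) subset of $\Sigma_qX$ of positive dimension: indeed, because $Y$ contains all minimal geodesics between its points and is closed, directions at $q$ of geodesics running into $Y$ form $\Sigma_qY$, which is a complete length subspace, and the hypothesis $q\notin\partial Y$ guarantees it has no boundary. Next, since $p\in Y$, every minimal geodesic $[qp]$ lies in $Y$, so $\Uparrow_q^p\subset\Sigma_qY$. Now apply Lemma 1.2 (with $\Sigma=\Sigma_qX\in\Alex^{n-1}(1)$ and $C=\Sigma_qY$ boundaryless locally convex of positive dimension): any $v\in\Sigma_qX$ with $|v\,\Sigma_qY|\ge\pi/2$ satisfies $|v\,\xi|=\pi/2$ for all $\xi\in\Sigma_qY$, in particular for all $\xi\in\Uparrow_q^p$.

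The conclusion then follows by a variational/contradiction argument. Suppose $\nabla_q\dist_p\notin T_qY$, i.e. its direction $u\in\Sigma_qX$ lies outside $\Sigma_qY$; since $\nabla_q\dist_p\ne 0$ (as $q\ne p$ and, if $q\in Y\setminus\partial Y$ is not the point $p$ itself, $\dist_p$ has no critical point there unless... — I should handle $\nabla_q\dist_p=0\in T_qY$ trivially). Decompose: let $w\in\Sigma_qY$ be a nearest point to $u$ in the closed set $\Sigma_qY$. If $|uw|<\pi/2$, then along a geodesic from $u$ toward $w$ the directional derivative $d_u\dist_{w}<0$, and one shows $\langle w',\xi\rangle\ge\langle u,\xi\rangle$ for a suitable perturbation $w'\in\Sigma_qY$ and all $\xi\in\Uparrow_q^p$, contradicting maximality of $u$ as the gradient direction — because moving from $u$ into $\Sigma_qY$ does not decrease the inner products with the $\Uparrow_q^p$-directions, as those directions make angle exactly $\pi/2$ with everything in $\Sigma_qY$ lying ``between''. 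If instead $|u\,\Sigma_qY|\ge\pi/2$, then by the Lemma 1.2 step $|u\,\xi|=\pi/2$ for all $\xi\in\Uparrow_q^p$, forcing $\langle\nabla_q\dist_p,\xi\rangle\le 0$ for all such $\xi$, hence $|\nabla_q\dist_p|^2=d_q\dist_p(\nabla_q\dist_p)=\max(0,\ \ldots)\le 0$, so $\nabla_q\dist_p=0\in T_qY$, contradiction. Thus $u\in\Sigma_qY$ and $\nabla_q\dist_p\in T_qY$.

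The main obstacle I anticipate is making the perturbation argument in the third paragraph fully rigorous: namely, verifying that if $u\notin\Sigma_qY$ with $|u\,\Sigma_qY|<\pi/2$, one can genuinely produce $w'\in\Sigma_qY$ with $\langle w',\xi\rangle\ge\langle u,\xi\rangle$ for every $\xi\in\Uparrow_q^p$ \emph{and} $|w'|=1$, i.e. that projecting the would-be gradient direction onto $\Sigma_qY$ (or $T_qY$) only increases its pairing with each $\Uparrow_q^p$-direction. This uses that $T_qY$ is a convex cone (from the ``contains all geodesics'' hypothesis plus $q\notin\partial Y$) on which $\dist_p$ restricts to $\dist_p|_Y$, so the gradient of $\dist_p|_Y$ at $q$ (computed intrinsically in $Y$, which is legitimate since $Y$ with the induced length metric is itself Alexandrov of curvature $\ge\kappa$ by the totally-geodesic property — or at least locally so near $q$) must agree with the $T_qY$-component of $\nabla_q\dist_p$; combined with the orthogonality $\Uparrow_q^p\perp\Sigma_qY$ established above, the normal component contributes nothing to any pairing that the gradient is required to dominate, forcing it to vanish. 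Once that decomposition $\nabla_q\dist_p^X = \nabla_q\dist_p^Y$ is in hand the result is immediate.
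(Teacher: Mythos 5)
Your reduction is the same as the paper's: handle $\nabla_q\dist_p=0$ trivially, observe $\Uparrow_q^p\subset\Sigma_qY$ (since every $[qp]\subset Y$), and recall that the gradient direction is the point $w\in\Sigma_qX$ maximizing $|w\Uparrow_q^p|$, so the task is to show this maximizer lies in $\Sigma_qY$. Your second case ($|u\,\Sigma_qY|\ge\pi/2$, forcing $|u\,\xi|=\pi/2$ on $\Uparrow_q^p$ and hence a zero gradient) is also fine. But the main case, $u\notin\Sigma_qY$ with $|u\,\Sigma_qY|<\pi/2$, is exactly where your argument has a genuine gap, and you flag it yourself. The ``perturbation'' $w'$ with $\langle w',\xi\rangle\ge\langle u,\xi\rangle$ is never produced, and the justification you sketch rests on two false premises: (i) the assertion ``$\Uparrow_q^p\perp\Sigma_qY$'' cannot be what you mean, since $\Uparrow_q^p\subset\Sigma_qY$ --- what Lemma 1.2 gives is only that directions $v$ with $|v\,\Sigma_qY|\ge\frac\pi2$ are perpendicular to all of $\Sigma_qY$; and (ii) the decomposition of $\nabla_q\dist_p$ into a $T_qY$-component plus a normal component presupposes a splitting $T_qX\cong T_qY\times(T_qY)^{\perp}$, which fails in general for Alexandrov spaces (the paper explicitly warns of this before Theorem 4.4). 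So the projection step, which is the heart of the proof, is not established.

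The missing idea in the paper's proof is to run Lemma 1.2 \emph{one level down}. Let $w$ be the maximizer of $|{\cdot}\,V|$ with $V=\Uparrow_q^p$, suppose $w\notin\Sigma_qY$, and let $w_0\in\Sigma_qY$ be nearest to $w$. Then $|\!\uparrow_{w_0}^{w}\Sigma_{w_0}\Sigma_qY|\ge\frac\pi2$ by first variation, so Lemma 1.2 applied in $\Sigma_{w_0}\Sigma_qX$ gives $|\!\uparrow_{w_0}^{w}\bar v|=\frac\pi2$ for every $\bar v\in\Sigma_{w_0}\Sigma_qY$ and $|ww_0|\le\frac\pi2$. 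Since $[w_0v]\subset\Sigma_qY$ for $v\in V$, the hinge at $w_0$ between $[w_0w]$ and $[w_0v]$ has angle $\frac\pi2$, and spherical hinge comparison ($\cos|wv|\ge\cos|ww_0|\cos|w_0v|$, with $|wv|>\frac\pi2$ and $0\le\cos|ww_0|<1$) yields $|Vw|<|Vw_0|$, contradicting the maximality of $w$. This comparison inequality is the rigorous substitute for the ``projection only increases the pairing'' claim, and it works without any splitting of the tangent cone. Without this (or an equivalent) step, your proof is incomplete.
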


We don't know whether Lemma 2.10 is true for convex subset or not.
\begin{proof}[Proof of Lemma 2.10]
If $\nabla _q \dist_p=0$, nothing need to prove. Hence we can assume
$\nabla _q \dist_p\neq 0$. Since $\frac{\nabla _q\dist_p}{|\nabla
_q\dist_p|}=\{u\in\Sigma_qX\hspace{1mm}|\hspace{1mm}|u\Ua_q^p|=\max\limits_{v\in\Sigma_q}\{|v\Ua_q^p|\}
$, and by the condition of the lemma, we have that
$\Uparrow_q^p\subset \Sigma_qY$, it suffices to show that for
$V\subset \Sigma_qY$, let $ w\in \Sigma_qX$ be a point such that
$|Vw|=\max \{|V,\ |\}>\frac{\pi}{2}$, then we have that $w\in
\Sigma_qY$.

Argue by contradiction. Suppose that $w\not\in \Sigma_qY$. Choose
$w_0\in \Sigma_qY,\ \text{such that}$ $|ww_0|=|w\Sigma_qY|$. By
Lemma 1.2, $|\ua_{w_0}^{w}\bar v|=\frac\pi2$, for any $ \bar v \in
\Sigma_{w_0}\Sigma_qY$ and $|ww_0|\leq \frac{\pi}{2}$. Hence
$\tilde\angle (\tilde w,\tilde w_0,\tilde V)\leq \frac{\pi}{2}$. By
hinge comparison, we have that $|Vw_0|>\frac{\pi}{2}$. It follows
that $|Vw|<|Vw_0|$, a contradiction to the choice of $w$.
\end{proof}

Recall a standard fact in topology (cf. \cite{[Bro]}): If $X\subset
S^m $ is a closed $(m-1)$-topological manifold as a subspace, then
$S^m-X$ has two connected components, each having $X$ as its set
boundary. We say that $X$ separates $S^m$.

\begin{proof}[Proof of Lemma 2.8]
Given $q\in D$, let $\gamma(t)\subset D$ be a minimal geodesic with
$\gamma(0)=q$, and let $x\in F$ be a point such that $|qx|=|qF|$. By
a standard contradiction argument, one can get that for
$v=\ua_{x}^{\gamma(t)}\in \Sigma_{x}A$, when $t$ is small enough,
there is $\ua_{x}^q \ \text{such that}\ |v\ua_{x}^q|<\frac{\pi}{2}$.
(Note that for different $t$, $\ua_{x}^q$ may be different.)

From the proof of the concavity of distance function to the boundary
of an Alexandrov space (cf. [30, Theorem 3.3.1], [9, Lemma 3.1]),
one can deduce that if $F$ satisfies the following two conditions:

(\romannumeral1) there is $\bar w\in \Sigma_{x}F$ such that $|\ua
_{x}^q\bar w|=|\ua_{x}^qv|+|v\bar w|=\frac{\pi}{2}$,

(\romannumeral2) there is a radial curve, $\sigma:
[0,\varepsilon]\to F$, with $\sigma(0)=x$ and $\sigma^+(0)=\bar w$,
for some $\varepsilon>0$,

then $\dist_F$ is concave in $D$.


Thus it suffices to check that $F$ satisfies the two conditions.

For condition (\romannumeral1): if $x\in S$, by the first variation
formula, we have that $|\ua_{x}^q\Sigma_{x}F|\geq\frac\pi2$.
Specially, $|\ua_{x}^q\Sigma_{x}S|\geq\frac\pi2$, thus
$\ua_{x}^q\in\Sigma_1^x$.
Hence 
$\Sigma_1^x=S^1$ or $[ab]$ with $|ab|=\pi$ and $\uparrow_{x}^q$ the
middle point of $[ab]$. And by Lemma 2.7, we have that for any
$v\in\Sigma_{x},|v\phi_{[px]}(N')|\leq\frac{\pi}{2}$. Hence there
are two of $N'$ say $v_1,v_2$ ($v_1$ may be equal to $v_2$) such
that $|\phi_{[px]}(v_1)\uparrow_{x}^q|=\frac{\pi}{2}$,
$|\phi_{[px]}(v_2)\uparrow_{x}^q|=\frac{\pi}{2}$. Since
$|\ua_x^qv|<\frac\pi2$, by Proposition (2.1.4), we have that $v\in
[\Sigma_0^x\Sigma_1^x]$. Thus by Proposition (2.1.3), we have that
$v\in [w_1w_0]$, for some $w_i\in \Sigma_i^x$. We can suppose that
$w_1\in [\ua_{x}^q\phi_{[px]}(v_2)]$. Then $\triangle
(\ua_{x}^q,w_0,\phi_{[px]}(v_2))$ is isometric to a triangle with
three side lengths $\frac\pi 2$ on $S^2(1)$. Hence there is $\bar
w\in \Sigma_{x}F$ such that $|\ua _{x}^q\bar w|=|\ua_{x}^qv|+|v\bar
w|=\frac{\pi}{2}$.

If $x\bar\in S$,
$\Sigma_{x}F=S(\Sigma_{x}S)\overset{homeo}{\simeq}S^{n-2}$ is convex
in $\Sigma_{x}A$ and separates $\Sigma_xA$, and by the first
variation formula, $|\ua_{x}^q\Sigma_{x}F|\geq\frac\pi2$. Then by
Lemma 4.19 below, we have that there is $\bar w\in \Sigma_{x}F$ such
that $|\ua _{x}^q\bar w|=|\ua_{x}^qv|+|v\bar w|=\frac{\pi}{2}$.

For condition (\romannumeral2):
since $\bar w\in \Sigma_{x}F$, without loss of generality, we can
assume that $\bar w\in\Sigma_{x}F_{v_1}$.  Since
$(\Sigma_{x}F_{v_1})'$ is dense in $\Sigma_{x}F_{v_1}$, there are
$q_i\in F_{v_1}$, such that $q_i\to x$ and $\uparrow_{x}^{q_i}\to
\bar w$. Let $\sigma_i$ be the radial curve at $q_i$ with respect to
$x$. By [1, Chapter 15] or [27, 3.4], we know that if we can show
that
$\lim\limits_{i\to\infty}\sigma_i([0,\varepsilon])=\sigma([0,\varepsilon])\subset
F_{v_1}$ for some small $\varepsilon>0$, then $\sigma$ is the
desired  radial curve.

If $x\bar\in S$, by Lemma 2.10, we can get the desired radial curve.
If $x\in S$ and $\bar w\in\Sigma_xS$, similarly by Lemma 2.10, we
can get the desired radial curve in $S$. If  $x\in S$ and $\bar
w\not\in\Sigma_xS$, we can choose $q_i\in F_{v_1}-S$. We claim that
$\sigma_i([0,\infty))\subset F_{v_1}$. Therefore we can get the
desired radial curve.

Finally, we will verify the claim by showing that $\sigma_i$ are
more and more farther away from $\partial F_{v_1}$. The reason is
that $(\dist_S\circ
\sigma_i(t))^+=-\langle\uparrow_{\sigma_i(t)}^{\pi(\sigma_i(t))},
\sigma_i^+(t)\rangle=-\langle\uparrow_{\sigma_i(t)}^{\pi(\sigma_i(t))},
\frac{|x\sigma_i(t)|}{t}\nabla_{\sigma_i(t)} \dist _{x}\rangle\geq
0$, the last inequality is because of the symmetry of $F$ locally,
we have that $\Uparrow_{q}^{x}$ are in the same half sphere as
$\uparrow_{q}^{\pi(q)}=\uparrow_{q}^{\partial F_{v_1}}$ in $\Sigma_q
F$, for any $q\in F$.

The lemma thus follows.
\end{proof}

\subsection{Extending $\dist_F$-gradient flows }

\ \\

Since $\dist_F$ is concave in $D=A\backslash F$, for each $x\in D$
there is a unique $\dist_F$-gradient curve from $x$. We call a
gradient curve maximal if it is not a proper subset of another
gradient curve. Note that any maximal gradient curve has empty
intersection with $F$. We will extend maximal gradient curves to
include points in $F$ so that each point in $F-(S,0)$ is contained
in two extended maximal gradient curve. This property plus the
simply connectedness of $A$ allow us to choose one such curve for
each point in $F-(S,0)$, such that we can define a ``flow",
$\Psi_a^t:(S,a>0)\to A$, by $\Psi_a^t((s,a))=\gamma_a(t)$, where
$\gamma$ is the chosen extended maximal gradient curve at $(s,a)$,
passing any given extended maximal gradient curve at any given
$(s_0,a)$. Our goal is to show that $\Psi_a^t$ is 1-Lipschitz.

To carry out the above, the key is to establish the local separation
property for $F-(S,0)$ (see Lemma 2.11) and the local $1$-Lipschitz
property for $\Psi_a^t$.

Before moving on, we need the following two lemmas.
\begin{lemma}
For any $q\in (S,a)\subset F_u,\  a\neq 0,\, B_F(q,r)$ separates
$B(q,r)$, for $r$ small enough, where $B_F(q,r)$ is a closed
$r$-ball in $F$.
\end{lemma}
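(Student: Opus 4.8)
The plan is to reduce the separation of $B(q,r)$ by $B_F(q,r)$ to the separation of the space of directions $\Sigma_qA$ by $\Sigma_qF$, and then to invoke the topological fact recalled above (cf.\ \cite{[Bro]}) together with the local conical structure of $A$ at the topologically nice point $q$.

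Since $a\neq 0$, we have $q\notin(S,0)$; writing $q=(s,a)$ under the isometric splitting $F_u\cong S\times\mathbb R^1_+$ of Key Lemma~0.7, a neighbourhood of $q$ in $F_u$ is isometric to a neighbourhood of $(s,a)$ in the interior $S\times(0,\infty)$, so in particular $q\in F_u\setminus\partial F_u$. By Proposition~(2.1.1) the space of directions $\Sigma_pS$ is a topological $(n-2)$-sphere for every $p\in S$, hence $S$ is a topological $(n-2)$-manifold, $F_u$ is a topological $(n-1)$-manifold near $q$, and $\Sigma_qF_u$ is homeomorphic to $S^{n-2}$. By Lemma~2.5 any minimal geodesic of $A$ between two points of $F_u$ stays in $F_u$, so for $r$ small the restricted and the intrinsic length metrics on $F_u\cap B(q,r)$ agree, $B_{F_u}(q,r)=F_u\cap B(q,r)$, and $\Sigma_qF_u$ embeds in $\Sigma_qA$ (which is homeomorphic to $S^{n-1}$, as $A$ is topologically nice) as a closed $(n-2)$-topological submanifold with the subspace topology. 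The same applies to each $F_v$, $v\in N'$.

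By the quoted topological fact, $\Sigma_qA\setminus\Sigma_qF_u$ has exactly two connected components $U_1,U_2$. Since $\Sigma_qF=\bigcup_{v\in N'}\Sigma_qF_v$ with $\dim\Sigma_qF_v=n-2<n-1=\dim\Sigma_qA$, the set $\Sigma_qF\setminus\Sigma_qF_u$ is closed and nowhere dense in $\Sigma_qA$; hence $\Sigma_qA\setminus\Sigma_qF=(U_1\setminus\Sigma_qF)\sqcup(U_2\setminus\Sigma_qF)$ is a disjoint union of two open sets which are dense in $U_1$, $U_2$ respectively, hence nonempty. Therefore $\Sigma_qF$ separates $\Sigma_qA$. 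Finally, I would transfer this to $B(q,r)$: for $r$ small $B(q,r)$ is homeomorphic to the truncated Euclidean cone over $\Sigma_qA$, with the homeomorphism realised along $\dist_q$-gradient (radial) curves from $q$; and by Lemma~2.10 applied to $Y=F_v$ (closed, and containing a minimal geodesic between any two of its points by Lemma~2.5), the gradient curve of $\dist_q$ from any point of $F_v\setminus\partial F_v$ remains in $F_v$. Thus this homeomorphism carries $F\cap B(q,r)=B_F(q,r)$ onto the cone over $\Sigma_qF$, and since a subset separating $\Sigma_qA$ cones off to a subset separating the cone, $B_F(q,r)$ separates $B(q,r)$.

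I expect the main obstacle to be the last step: making rigorous that $F$ is conical at $q$, i.e.\ that the conical neighbourhood homeomorphism of $A$ at $q$ can be chosen compatible with $F$. This is exactly where Lemma~2.10 is indispensable, playing the role of the ``totally geodesic'' property — it forces radial curves starting tangent to $\Sigma_qF_v$ to stay in $F_v$, so that the cone structure of $B(q,r)$ restricts to that of $B_F(q,r)$. A secondary point to check is that, for $r$ small, $B_F(q,r)$ computed with the intrinsic length metric of $F$ coincides with $F\cap B(q,r)$, which follows from Lemma~2.5 together with the fact that short $A$-geodesics between points of $F$ near $q$ do not leave $F$.
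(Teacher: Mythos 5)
Your reduction to the infinitesimal picture is a genuinely different route from the paper's, and the first part of it is sound: since $a\neq 0$, $q$ is an interior point of $F_u\overset{\rm isom}{\cong}S\times\mathbb R^1_+$, Proposition (2.1.1) gives $\Sigma_qF_u\cong S(\Sigma_sS)\cong S^{n-2}$, convexity of $F_u$ (Lemma 2.5) identifies the restricted and intrinsic structures near $q$, and the separation theorem of \cite{[Bro]} then shows $\Sigma_qF_u$ separates $\Sigma_qA\cong S^{n-1}$. (For $r$ small one may simply assume $B(q,r)\cap F=B(q,r)\cap F_u$, since the other sheets meet $F_u$ only along $(S,0)$ and $q$ is at distance $a>0$ from $(S,0)$; this is cleaner than your nowhere-dense argument and also gives exactly two components, which is what is used after the lemma.)

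The gap is exactly where you suspect it, and Lemma 2.10 does not close it. The homeomorphism $B(q,r)\cong \bar B(o,r)\subset T_qA$ furnished by the local stability theorem is built from the MCS/admissible-map machinery; it is not realized by radial curves. The gradient exponential map along radial curves from $q$ is in general neither injective nor onto a neighborhood of $q$, so there is no ``conical homeomorphism along radial curves'' to restrict to $F$. Lemma 2.10 only yields that gradient curves of $\dist_q$ launched from interior points of $F_u$ stay in $F_u$ (flow-invariance of $F_u$); it gives no control over the stability homeomorphism and in particular does not show that this homeomorphism carries $B_F(q,r)$ onto the cone over $\Sigma_qF_u$. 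The paper sidesteps the compatibility issue entirely: apply stability separately to get $B(q,r)\cong D^n$ and $B_F(q,r)\cong D^{n-1}$, note that the frontier of $B_F(q,r)$ in $F$ equals $B_F(q,r)\cap Fr(B(q,r))\cong S^{n-2}$, then double $B(q,r)$ along its frontier to obtain an $S^n$ containing the double of $B_F(q,r)$ as a subspace homeomorphic to $S^{n-1}$, and apply the quoted separation theorem there before restricting back to one copy. You should replace your last step with that doubling argument (or otherwise supply a proof that the pair $(B(q,r),B_F(q,r))$ is homeomorphic to the cone pair, which is not available off the shelf).
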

\begin{proof}
For $r$ small enough, we can assume that $B(q,r)\cap F=B(q,r)\cap
F_{v_1}$. 
By the local version of Perelman's stability theorem ([24, 4.7]) and
Proposition (2.1.1), we can choose $r$ sufficiently small, such that
$B(q,r)$ is homeomorphic to a $r$-ball on $T_qA$, which is
homeomorphic to $D^n$, and $ B_F(q,r)$ is homeomorphic to a $r$-ball
on $T_qF$, which is homeomorphic to $D^{n-1}$. By definition,
$B_F(q,r)\cap Fr(B(q,r))=(Fr(B_F(q,r))$ in $F$), which is
homeomorphic to $S^{n-2}$. By considering the double of $B(q,r)$, we
get that $B_F(q,r)$ separates $B(q,r)$.
\end{proof}
Let $q,r$ be as in Lemma 2.11, and let $\bar U_q\subset B(q,r)$ be a
convex closed neighborhood of $q$. Then $B_F(q,r)$ separates $\bar
U_q$ into two components $G_{q1},G_{q2}$.

Since $\dist_F$ is concave in $D$, for $\dist_F$-gradient curves
$\alpha(t),\ \beta(t)$, we have that $|\alpha(t)\beta(t)|$ is
$1$-Lipschitz if there exists a minimal geodesic joining $\alpha(t)$
and $\beta(t)$, for any $t$, in the domain $D$. The following
property will guarantee the local $1$-Lipschitz property for
$\Psi_a^t$.

\begin{lemma}
For $x,y$ in the interior of the same component say
$G_{q1}\backslash \partial \bar U_q$ (denoted by $ G_{b1}^\circ$),
we have that $[xy]\cap F=\emptyset$.
\end{lemma}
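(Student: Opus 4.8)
The plan is to prove Lemma 2.12 by a connectedness argument inside the small convex ball $B(q,r)$, exploiting the separation established in Lemma 2.11 together with the concavity of $\dist_F$ in $D$. First I would fix $x,y\in G_{q1}^\circ$ and suppose, for contradiction, that some minimal geodesic $[xy]$ meets $F$; as in the proof of Lemma 2.5, since geodesics cannot branch, $[xy]^\circ\cap F$ is a nonempty subset and in fact $[xy]$ must cross from one side of the separating set $B_F(q,r)$ to the other, or else stay on $F$ for a subarc. I would first dispose of the latter possibility: if a subarc of $[xy]$ lies in $F$, then since $F=\cup_{v\in N'}F_v$ and each $F_v\overset{\rm isom}{\cong}S\times\mathbb R^1_+$ splits (Key Lemma 0.7) while $x,y\notin F$, the endpoints of that subarc would be interior points of $[xy]$ lying in $F$ with the geodesic leaving $F$ on both ends, contradicting Lemma 2.5 applied inside $F_v$ (which says $[x'y']\subset F_v$ for $x',y'\in F_v$) — more precisely, the geodesic would have to be tangent to $\partial F_v$ from outside, which forces branching.

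Next, for the crossing case, I would use that $B_F(q,r)$ separates $B(q,r)$ into $G_{q1},G_{q2}$ with $B_F(q,r)$ as common topological boundary: a path from $x\in G_{q1}^\circ$ to $y\in G_{q1}^\circ$ that leaves $G_{q1}$ must meet $B_F(q,r)$ in at least two points, entering and exiting. So I would take the first and last points $x_1,x_2$ of $[xy]\cap F$ along the geodesic. The subarc $[x_1 x_2]\subset[xy]$ is then a minimal geodesic between two points of $F$, and by the same non-branching reasoning its interior either lies in $F$ (handled above) or meets $F$ only at its endpoints. In the latter situation I would invoke the concavity of $\dist_F$ in $D$ from Lemma 2.8: along the open geodesic $[x_1x_2]^\circ\subset D$, the function $t\mapsto \dist_F(\text{pt at parameter }t)$ is concave and vanishes at both endpoints $x_1,x_2$, hence is $\geq 0$ with equality only if identically zero; but it is not identically zero since $[x_1x_2]^\circ\cap F=\emptyset$ means $\dist_F>0$ on the open arc — so this is actually consistent. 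The real leverage comes instead from looking at the angle: at $x_1\in F$ the incoming direction $\ua_{x_1}^x$ and outgoing direction $\ua_{x_1}^{x_2}$ are antipodal in $\Sigma_{x_1}A$, and since $x\in G_{q1}^\circ$ lies strictly on one side of the separating set $\Sigma_{x_1}F$ (which itself separates $\Sigma_{x_1}A$ by the argument in Lemma 2.8, using Proposition (2.1.1)), the antipode $\ua_{x_1}^{x_2}$ must lie strictly on the other side, i.e.\ $x_2\in G_{q2}^\circ$ — contradicting $y\in G_{q1}^\circ$ and the minimality forcing $x_2$ and $y$ to be on the same side.

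To make the last step rigorous I would argue as follows: the separation of $\Sigma_{x_1}A$ by $\Sigma_{x_1}F\cong S^{n-2}$ into two open hemispheres $H_1,H_2$ corresponds, after the local stability homeomorphism $B(q,r)\cong D^n$, to the two components $G_{q1},G_{q2}$; a minimal geodesic from $x_1$ into $G_{q1}^\circ$ has initial direction in $H_1$, and a minimal geodesic from $x_1$ into $G_{q2}^\circ$ has initial direction in $H_2$. Since $\ua_{x_1}^{x}\in H_1$ (as $x\in G_{q1}^\circ$) and $\ua_{x_1}^{x_2}$ is its antipode, hence in $H_2$, the continuation of $[xy]$ past $x_1$ immediately enters $G_{q2}$, so the whole subarc $[x_1 x_2]$ (minus $x_1$) lies in $G_{q2}$, forcing $x_2\in \overline{G_{q2}}$; but then $[x_2 y]$ would again have to cross $B_F(q,r)$ to return to $G_{q1}^\circ$, producing another intersection point beyond $x_2$, contradicting that $x_2$ was the \emph{last} point of $[xy]\cap F$.

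The main obstacle I anticipate is the bookkeeping of ``which side'' — making precise the correspondence between the two components $G_{q1},G_{q2}$ of $\bar U_q\setminus B_F(q,r)$ and the two hemispheres of $\Sigma_{x_1}A\setminus\Sigma_{x_1}F$ for a point $x_1\in B_F(q,r)$ that is \emph{not} the center $q$. One must check that $\Sigma_{x_1}F$ still separates $\Sigma_{x_1}A$ (which follows from $x_1\in F_{v_1}$, the splitting $F_{v_1}\cong S\times\mathbb R^1_+$ giving $\Sigma_{x_1}F_{v_1}$ as a spherical suspension $S(\Sigma_{x_1}S)$, together with Proposition (2.1.1) and the standard topological separation fact recalled before Lemma 2.8), and that the correspondence of sides is locally constant along $B_F(q,r)$, which is where connectedness of $B_F(q,r)$ — itself a consequence of the stability theorem identifying it with $D^{n-1}$ — is used. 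Once this is set up, the non-branching of geodesics and the antipodality of in/out directions at each point of $[xy]\cap F$ close the argument by a discrete parity count on the number of crossings.
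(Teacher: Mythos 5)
Your approach diverges from the paper's and has a genuine gap at its central step. The paper's proof is short: using the local separation (Lemma 2.11) and the convexity of the $F_v$'s (Lemma 2.5), one checks that the closures $\bar G_{qi}$, $i=1,2$, are themselves convex; Theorem 2.13 then gives $F\cap\bar G_{qi}\subset\partial\bar G_{qi}$ (the Alexandrov boundary), and the conclusion follows because a minimal geodesic between interior points of a convex set never meets the boundary of that set ([24, 5.2]). You never establish convexity of $\bar G_{q1}$; instead you attempt a crossing--parity argument, and its pivotal claim --- that at a point $x_1\in[xy]\cap F$ the outgoing direction $\ua_{x_1}^{x_2}$ must lie in the component $H_2$ of $\Sigma_{x_1}A\setminus\Sigma_{x_1}F$ opposite to the one containing $\ua_{x_1}^{x}$ --- does not follow from antipodality plus separation. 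Two directions at distance $\pi$ can perfectly well lie in the same component of the complement of a separating $(n-2)$-sphere: on $S^2$, take the separating circle to be a small circle about the north pole and the two antipodal points on the equator. Without side-switching, the geodesic may touch $B_F(q,r)$ and return to $G_{q1}$, and the parity count collapses. To force the switch you would need $\Sigma_{x_1}F$ to be ``equatorial'' for the pair of antipodal directions, which is essentially the convexity/orthogonality information the paper extracts by proving $\bar G_{qi}$ convex.

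Two further gaps: (i) your disposal of the case where a subarc of $[xy]$ lies in $F$ rests on ``tangency forces branching,'' which is not an argument --- Lemma 2.5 says a minimal geodesic between two points of $F_u$ stays in $F_u$, but says nothing that prevents a geodesic from entering and leaving $F$ through a nondegenerate subarc; and (ii) the parity count tacitly assumes $[xy]\cap F$ is a finite set of transversal crossings, whereas a priori it is only a closed subset of the segment. Both issues evaporate in the paper's route: once $\bar G_{q1}$ is convex and $F\cap\bar G_{q1}$ lies in its Alexandrov boundary, no case analysis on the intersection pattern is needed.
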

In the proof of Lemma 2.12, we need the following theorem about the
relationship between the boundary of a convex subset as an
Alexandrov space and the set boundary.
\begin{theorem}[{\rm{\cite{[AKP]}}}]
Let $C\subset X\in\Alex^m(\kappa)$ be a convex closed subset
($C\in\Alex^m(\kappa)$ with the induced metric). If $C$ has a
nonempty interior, then $\partial C=Fr(C)\cup (C\cap
\partial X)$.
\end{theorem}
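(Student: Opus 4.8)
The plan is to reduce the statement to the single inclusion $Fr(C)\subseteq\partial C$ and then prove that inclusion by induction on $m=\dim X=\dim C$. For the reduction, note that $C=\mathrm{int}_X(C)\sqcup Fr(C)$ since $C$ is closed, and that on the open set $\mathrm{int}_X(C)$ the subspace $C$ coincides with $X$ near each point; hence for $p\in\mathrm{int}_X(C)$ one has $\Sigma_pC=\Sigma_pX$, so $p\in\partial C$ iff $p\in\partial X$. Thus $\partial C\cap\mathrm{int}_X(C)=\partial X\cap\mathrm{int}_X(C)$, i.e.\ $\partial C\setminus Fr(C)=(C\cap\partial X)\setminus Fr(C)$; combined with $Fr(C)\subseteq\partial C$ (to be proved) this gives $\partial C=Fr(C)\cup(C\cap\partial X)$. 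The reverse inclusion $\partial C\subseteq Fr(C)\cup(C\cap\partial X)$ is immediate from the same observation: if $p\in\partial C\setminus Fr(C)$ then $p\in\mathrm{int}_X(C)$ and $\partial\Sigma_pX=\partial\Sigma_pC\neq\emptyset$, so $p\in\partial X$.

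I would then prove $Fr(C)\subseteq\partial C$ by induction on $m$. The cases $m\le 1$ are elementary: a closed convex subset with nonempty interior of a $1$-dimensional Alexandrov space (an interval, ray, line, or circle) is a closed arc, and its frontier points in $X$ are precisely the endpoints of the arc that are not endpoints of $X$, which are boundary points of $C$ by the definition of the boundary of a $1$-dimensional Alexandrov space. For the inductive step, fix $p\in Fr(C)$ and pass to the space of directions. Since $C$ is convex, minimal geodesics of $X$ between points of $C$ stay in $C$, so $d_C=d_X|_C$ and the intrinsic space of directions $\Sigma_p(C,d_C)$ is isometric to the subset $\Sigma_pC\subseteq\Sigma_pX$ consisting of directions of $C$-geodesics issuing from $p$; moreover $\Sigma_pX\in\Alex^{m-1}(1)$ and $\Sigma_pC$, being a convex subset, inherits curvature $\ge 1$.

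The core of the argument is then three structural facts about $\Sigma_pC$, each following from convexity together with angle comparison and the theory of tangent cones of convex subsets (cf.\ Perelman, Petrunin, Plaut): (a) $\Sigma_pC$ is a closed convex subset of $\Sigma_pX$; (b) $\dim\Sigma_pC=m-1$ and $\Sigma_pC$ has nonempty interior in $\Sigma_pX$, using that $\dim C=m$ forces $\dim\Sigma_qC=m-1$ at every $q\in C$ and that the visual image from $p$ of a small metric ball lying in $\mathrm{int}_X(C)$ is a neighborhood in $\Sigma_pX$ of the corresponding direction; (c) $\Sigma_pC\neq\Sigma_pX$, because $T_pC=T_pX$ would force $p\in\mathrm{int}_X(C)$, contradicting $p\in Fr(C)$. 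Granting (a)--(c): since $m-1\ge 1$ the space $\Sigma_pX$ is connected, so the proper closed subset $\Sigma_pC$, having nonempty interior, has nonempty frontier $Fr_{\Sigma_pX}(\Sigma_pC)$. Applying the inductive hypothesis to $\Sigma_pC\subseteq\Sigma_pX$ gives $Fr_{\Sigma_pX}(\Sigma_pC)\subseteq\partial(\Sigma_pC)$, hence $\partial(\Sigma_pC)\neq\emptyset$, i.e.\ $p\in\partial C$. This completes the induction and, with the reduction above, the theorem.

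The hard part will be the rigorous verification of (a)--(c), especially (b)---that full dimension of a closed convex subset of a space of directions already forces nonempty interior there---and (c)---the equivalence $p\in\mathrm{int}_X(C)\iff T_pC=T_pX$ for closed convex $C$. These are precisely the places where convexity is genuinely exploited, and they also explain why the induction must bottom out at dimension $1$: for $m-1=0$ the space $\Sigma_pX$ can be disconnected, so "proper and closed" no longer guarantees a nonempty frontier.
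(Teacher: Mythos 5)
The paper offers no proof of this statement: Theorem 2.13 is quoted verbatim from the Alexander--Kapovitch--Petrunin book \cite{[AKP]}, so there is no in-paper argument to measure yours against. Your scheme is the natural one. The reduction to the single inclusion $Fr(C)\subseteq\partial C$ is correct (on $\mathrm{int}_X(C)$ one has $\Sigma_pC=\Sigma_pX$, so there the conditions $p\in\partial C$ and $p\in\partial X$ coincide), and the induction on dimension through spaces of directions is sound \emph{granted} your facts (a)--(c): a proper nonempty closed subset of the connected space $\Sigma_pX$ has nonempty frontier, and the inductive hypothesis converts that frontier point into a point of $\partial(\Sigma_pC)$, i.e.\ $p\in\partial C$.

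The one place I would call a genuine gap is your justification of (b). The assertion that the visual image from $p$ of a small ball $B(q,r)\subseteq\mathrm{int}_X(C)$ is a neighborhood of $\uparrow_p^q$ in $\Sigma_pX$ is not a general fact: a direction $v\in\Sigma_pX$ is only guaranteed to be a \emph{limit} of directions $\uparrow_p^{x_i}$ with $x_i\to p$, not to be realized by a geodesic of definite length, so the set $\{\uparrow_p^x\,\mid\,x\in B(q,r)\}$ can fail to contain a neighborhood of $\uparrow_p^q$ (what comparison does give you is that any direction near $\uparrow_p^q$ that \emph{is} of the form $\uparrow_p^x$ with $|px|\approx|pq|$ forces $x\in B(q,r)$ --- but that covers only the realized directions). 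The conclusion you need should instead be extracted from the tangent-cone theory you already invoke for (c): once one knows that $T_pC$ is a closed convex subcone of $T_pX$ and that $p\in\mathrm{int}_X(C)$ iff $T_pC=T_pX$, one gets nonempty interior of $\Sigma_pC$ in $\Sigma_pX$ by picking a regular point $q$ of the $(m-1)$-dimensional space $\Sigma_pC$, noting $T_q(\Sigma_pC)=\mathbb R^{m-1}$ embeds distance-preservingly in the $(m-1)$-dimensional nonnegatively curved cone $T_q(\Sigma_pX)$, which therefore equals $\mathbb R^{m-1}$ by the splitting theorem, and then applying the interior-point characterization at $q$. With that repair the proof goes through, but note that (a), (c) and this characterization are themselves exactly the nontrivial convexity facts that \cite{[AKP]} supplies, so your argument is really a reduction of the theorem to other statements from the same source; that is acceptable as a sketch, but those inputs carry essentially all the weight.
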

\begin{proof}[Proof of Lemma 2.12]
Based on the local separation and the convexity of $F_v$, it is easy
to check that $\bar G_{qi},i=1,2$ are convex. By Theorem 2.13, we
have that $F\cap \bar G_{qi}\subset
\partial\bar G_{qi}$. Hence $[xy]\cap F=\emptyset$, 
because any minimal geodesic between interior points of a convex set
does not intersect the boundary of the convex set ([24, 5.2]).
\end{proof}
We are now in a position for the construction of a nonexpanding map.

For $x\in D$, let $q\in (S,a)\subset F_u, u\in N'$, be a point such
that $|xq|=|xF|$. First suppose $a\neq 0$. We will construct
extended maximal gradient curves from $(S,a)$.

For $b\in (S,a)$, let $b_j\in G_{b1}^\circ$, such that $b_j\to b$.
Let $\gamma_b^j:[0,t_0]\to A,\ t_0\geq 0$ denote the
$\dist_F-$gradient curve from $b_j$. Since $\gamma_b^j$ are
equi-continuous, after passing to a subsequence, we can suppose that
$\gamma_b^j$ converge to $\gamma_b:[0,t_0]\to A$ with
$\gamma_b(0)=b$. Note that $\gamma_b$ doesn't depend on the choice
of $b_j$. Indeed, for any $c\in U_b\cap (S,a)$, let $\gamma_c$ be a
similarly constructed curve, namely, let $c_j\in G_{b1}^\circ$, such
that $c_j\to c$. The $\dist_F-$gradient curve from $c_j$,
$\gamma_c^j:[0,t_0]\to A$, after passing to a subsequence, converge
to $\gamma_c:[0,t_0]\to A$ with $\gamma_c(0)=c$. Then
$|\gamma_b(t)\gamma_c(t)|=\lim
\limits_{j\to\infty}|\gamma_b^j(t)\gamma_c^j(t)|, 0\leq t\leq t_0$.
Since $\gamma_b^j(0),\gamma_c^j(0)\in G_{b1}^\circ\subset  U_b$, by
Lemma 2.12, we have that $[\gamma_b^j(0)\gamma_c^j(0)]\cap
F=\emptyset$. Set $\sigma(s):[0,l]\to[\gamma_b^j(0)\gamma_c^j(0)]$
with arc-length parametrization. Consider the curves
$\sigma_t(s)=\Phi_{\dist_F}^t([\gamma_b^j(0)\gamma_c^j(0)](s))$,
where $\Phi_{\dist_F}^t$ is the standard $\dist_F$-gradient flow
defined in [30, 2.2]. Let $P_m=\{0=s_0<s_1<\cdots<s_m=l,
s_i-s_{i-1}=\frac{l}{m}\}$ be a partition of $[0,l]$, with $m$ large
enough, such that $\frac{l}{m}\leq \frac{\varepsilon}{10}$,  where
$\varepsilon=|[\gamma_b^j(0)\gamma_c^j(0)],F|$. If
$t\leq\frac{\varepsilon}{4}$, then
$[\sigma_t(s_i)\sigma_{t}(s_{i-1})]\cap F=\emptyset$, thus
$|\sigma_t(s_i)\sigma_{t}(s_{i-1})|\leq|\sigma(s_i)\sigma(s_{i-1})|$.
Let $m\to\infty$, we get that $|\gamma_b^j(t)\gamma_c^j(t)|\leq
\Length(\sigma_t)\leq
\Length(\sigma_0)=|\gamma_b^j(0)\gamma_c^j(0)|$, where $\Length()$
denotes the length of the curve. If $t>\frac{\varepsilon}{4}$, we
can repeat the procedure for
$[\sigma_{\frac{\varepsilon}{4}}(s_i)\sigma_{\frac{\varepsilon}{4}}(s_{i-1})]$.
Finally, we get that for any $0\leq t\leq t_0$,
$|\gamma_b^j(t)\gamma_c^j(t)|\leq|\gamma_b^j(0)\gamma_c^j(0)|$. When
$j\to\infty$, we get that $|\gamma_b(t)\gamma_c(t)|\leq|bc|$. In
particularly, we have that if $b=c$, then $\gamma_b=\gamma_c$.

For simplicity, in the following context, we will say that
$\gamma_b$ and $\gamma_c$ are in the same component with respect to
$U_b$, and we will call $\gamma_b$ an extended maximal gradient
curve, if $\gamma_b- b$ is maximal.

By now we can see that for each point on $(S,a)$ there exist exactly
two extended maximal gradient curves. Choose a point $y\in (S,a)$,
denote the two extended maximal gradient curves by
$\gamma_{y1},\gamma_{y0}$. For any other point say $z$, we will
denote the two extended maximal gradient curves from $z$ by
$\gamma_{z1},\gamma_{z0}$, such that $\gamma_{zi}$ are a
continuation of $\gamma_{yi}$ along $[yz]$, i.e., there is a
partition of $[yz]$, $P=\{y_0=y,y_1,\cdots,y_k=z\}$, such that
$\gamma_{y_j,i},\gamma_{y_{j+1},i}$ are in the same component with
respect to $U_{y_j}$. It doesn't depend on the choice of the
partition, since for another partition $P_1$, we can consider $P\cup
P_1$ to get the independency.

Since $\pi_1((S,a))=0$, similarly as the final part of the proof of
Key Lemma 0.7, we can see that the denoting doesn't depend on the
choice of $[yz]$. Thus we can define two ``flows"
$\Psi_{ia}^t:(S,a)\to A$, by $b\to\gamma_{bi}(t),b\in (S,a)$, and we
fix one passing $x$, which exists, since $[xq]$ is contained in an
extended maximal gradient curve, denoted by $\Psi_{a}^t:(S,a)\to A$.

\begin{lemma}
The map $\Psi_{a}^t:(S,a)\to A$ is $1$-Lipschitz.
\end{lemma}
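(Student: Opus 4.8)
The plan is to reduce the global $1$-Lipschitz statement to the local $1$-Lipschitz property established in the construction preceding the lemma, using a chaining argument along a minimal geodesic in $(S,a)$ together with the fact that $(S,a)$ is a length space isometric to the soul. Concretely, fix $b,c\in (S,a)$ and let $[bc]\subset (S,a)$ be a minimal geodesic (which lies in $(S,a)$ since $(S,a)\cong S$ is convex without boundary). I would cover $[bc]$ by finitely many convex neighborhoods $U_{y_0},\dots,U_{y_{k-1}}$ as in the construction of $\Psi_a^t$, with $y_0=b$, $y_k=c$, and $y_{j},y_{j+1}$ both in $U_{y_j}$; then for each consecutive pair the two extended maximal gradient curves $\gamma_{y_j}$ and $\gamma_{y_{j+1}}$ lie in the same component with respect to $U_{y_j}$. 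The estimate $|\gamma_{b}(t)\gamma_{c}(t)|\le |bc|$ derived in the construction (passing $j\to\infty$ from the approximating gradient curves $\gamma_b^j,\gamma_c^j$ inside the single component $G_{b1}^\circ$, where Lemma 2.12 guarantees $[\gamma_b^j(0)\gamma_c^j(0)]\cap F=\emptyset$ and hence $1$-Lipschitz behavior of the genuine $\dist_F$-gradient flow) gives exactly $|\Psi_a^t(y_j)\,\Psi_a^t(y_{j+1})|\le |y_jy_{j+1}|$ for each $j$ and all $t$ in the common domain.

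Summing over $j$ and using the triangle inequality in $A$ yields
\[
|\Psi_a^t(b)\,\Psi_a^t(c)|\ \le\ \sum_{j=0}^{k-1}|\Psi_a^t(y_j)\,\Psi_a^t(y_{j+1})|\ \le\ \sum_{j=0}^{k-1}|y_jy_{j+1}|\ =\ |bc|,
\]
the last equality because the $y_j$ lie in order along the minimal geodesic $[bc]$. Since $b,c\in (S,a)$ were arbitrary, this is precisely the assertion that $\Psi_a^t$ is $1$-Lipschitz. One small point to check is that the common domain of definition $t\in[0,t_0]$ can be taken uniform along $[bc]$: this follows because extended maximal gradient curves have domains that vary lower-semicontinuously (a gradient curve of a concave function existing up to time $t$ at one point persists for nearby points, cf. the gradient-flow theory in [30, 2.2] and [1, Chapter 15]), so by compactness of $[bc]$ we may shrink to a uniform $t_0>0$ and then extend by iterating.

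The main obstacle, I expect, is bookkeeping the ``same component'' relation consistently along the chain: one must know that the labeling $\gamma_{y_j,i}$ versus $\gamma_{y_j,0}$ is transported coherently through the overlapping neighborhoods $U_{y_j}$, so that the two curves being compared at each step really are the matched pair for which the local estimate was proved. This is exactly the point addressed in the construction via the partition-independence argument (using $\pi_1((S,a))=0$, as in the final part of the proof of Key Lemma 0.7), so the work is to invoke that coherence rather than reprove it. Once the labeling is known to be path-independent, the chaining estimate above is routine, and the lemma follows.
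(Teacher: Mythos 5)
Your proposal is correct and is essentially the paper's argument: the paper likewise divides the minimal geodesic $[bc]$ into small pieces on which the local $1$-Lipschitz property holds and concludes $|bc|\geq \Length(\Psi_a^t([bc]))\geq |\Psi_a^t(b)\Psi_a^t(c)|$, which is exactly your chaining plus triangle inequality. Your additional remarks on the uniform time domain and the coherence of the component labeling are points the paper handles in the construction preceding the lemma, as you correctly note.
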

\begin{proof}
For $b,c\in (S,a)$,  we need to show that $|bc|\geq
|\gamma_b(t)\gamma_c(t)|$. Dividing $[bc]$ into small pieces so that
the above local 1-Lipschitz property holds, one gets that $|bc|\geq
\Length (\Psi_a^t([bc]))\geq|\Psi_a^t(b)\Psi_a^t(c)|$.
\end{proof}

\subsection{Completion of the proof of Theorem A}
\begin{lemma}
Let $S_x=\Psi_{a}^{|xq|}((S,a))$. Then $\pi|_{S_x}:S_x\to S$ is an
isometry.
\end{lemma}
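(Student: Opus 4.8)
Proof proposal for Lemma 2.16 ($\pi|_{S_x}:S_x\to S$ is an isometry).

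The plan is to run the ``distance-nonincreasing deformation must be onto, and a distance-nonincreasing onto self-map of a compact space is an isometry'' argument, as outlined in the Introduction. First I would recall from Lemma 2.15 that $\Psi_a^t:(S,a)\to A$ is $1$-Lipschitz, hence $S_x=\Psi_a^{|xq|}((S,a))$ is the image of the compact space $(S,a)$ under a $1$-Lipschitz map; in particular $S_x$ is compact and the map $\Psi_a^{|xq|}:(S,a)\to S_x$ is $1$-Lipschitz. Since $\pi$ is distance nonincreasing (Theorem 0.3) and $i:S\to(S,a)$ is the natural isometry, the composition $g=i\circ\pi\circ\Psi_a^{|xq|}:(S,a)\to(S,a)$ is $1$-Lipschitz. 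The key point is that $g$ is homotopic to the identity: the family $t\mapsto i\circ\pi\circ\Psi_a^{t}$ is a continuous deformation with $i\circ\pi\circ\Psi_a^0=i\circ\pi|_{(S,a)}=\mathrm{id}_{(S,a)}$ (since $\Psi_a^0$ is the inclusion $(S,a)\hookrightarrow F_u\subset F$ and $\pi$ restricted to $(S,a)$ is the canonical identification of the slice with $S$ via $F_u\cong S\times\mathbb R^1_+$, Key Lemma 0.7). Hence $g$ has degree one, so $g$ is onto $(S,a)$.

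Next I would upgrade ``$1$-Lipschitz and onto'' to ``isometry''. Because $(S,a)$ is a compact length space and $g:(S,a)\to(S,a)$ is a surjective $1$-Lipschitz map, a standard argument forces $g$ to be an isometry: if $g$ strictly contracted some pair of points, then by compactness one could iterate and a volume/diameter or a packing-number argument would contradict surjectivity. (Concretely: a surjective $1$-Lipschitz self-map of a compact metric space is an isometry — one checks that for any finite $\varepsilon$-net, its $g$-image is again an $\varepsilon$-net of the same cardinality, forcing all the relevant distances to be preserved in the limit; alternatively invoke the Hausdorff-measure monotonicity, since $\dim(S,a)=\dim S$.) Once $g=i\circ\pi\circ\Psi_a^{|xq|}$ is an isometry of $(S,a)$, and $i$ is an isometry, it follows that $\pi\circ\Psi_a^{|xq|}:(S,a)\to S$ is an isometry. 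In particular $\Psi_a^{|xq|}$ is injective and an isometry onto its image $S_x$, and the inverse of $\pi|_{S_x}$ is $\Psi_a^{|xq|}\circ i^{-1}\circ(\pi\circ\Psi_a^{|xq|})^{-1}$-type composition; cleanly: $\pi|_{S_x}=(\pi\circ\Psi_a^{|xq|})\circ(\Psi_a^{|xq|})^{-1}$ is a composition of an isometry with the inverse of the $1$-Lipschitz surjection $\Psi_a^{|xq|}$, and since $\pi$ is $1$-Lipschitz while $\pi\circ\Psi_a^{|xq|}$ is an isometry, $\Psi_a^{|xq|}$ must itself be an isometry onto $S_x$; therefore $\pi|_{S_x}$ is an isometry.

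For the case $a=0$ (i.e.\ $q\in(S,0)=S$ and $\hat x\in S$), the flow $\Psi_a^t$ is not directly available, so I would instead take a sequence $a_k\downarrow 0$ and points $q_k\in(S,a_k)$ approximating $q$ along the extended maximal gradient structure, form $S_{x,k}=\Psi_{a_k}^{|xq_k|}((S,a_k))$ — each an isometric copy of $S$ by the argument above with $\pi|_{S_{x,k}}$ an isometry — and pass to a Gromov–Hausdorff / Hausdorff limit inside $A$ (using that the $S_{x,k}$ lie in a fixed compact region, have uniformly bounded diameter, and are uniformly isometric to $S$). The limit $S_x$ contains $x$, is isometric to $S$, and $\pi|_{S_x}$ is $1$-Lipschitz and, being a limit of isometries onto $S$, is onto $S$; the same surjective-$1$-Lipschitz-self-map rigidity (after composing with $\pi$) shows $\pi|_{S_x}$ is an isometry. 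The main obstacle I anticipate is the $a=0$ limiting step: one must check that the chosen extended maximal gradient curves vary continuously enough that the limit $S_x$ is a genuine subset passing through $x$ on which $\pi$ restricts to a bijection (not merely a surjection that could have collapsed some fibers in the limit) — this is exactly the point the paper flags as handled ``by a limit argument (see 2.4)'', and it relies on the local separation property (Lemma 2.11) and the two-extended-gradient-curves structure to keep the slices $S_{x,k}$ from degenerating.
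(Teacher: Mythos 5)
Your proposal is correct and follows essentially the same route as the paper: the paper also forms the deformation $H(\cdot,t)=i\circ\pi\circ\Psi_a^t$, notes it is continuous and equals the identity at $t=0$, concludes that $H(\cdot,l)$ is onto, and then invokes the rigidity of $1$-Lipschitz surjections between compact metric spaces to get that both $\Psi_a^{|xq|}$ and $\pi|_{S_x}$ are isometries. (Your $a=0$ discussion is extra material that the paper handles after this lemma, via Claim 2.16, but it matches the paper's limit argument.)
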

\begin{proof}
Let $i:S\to (S,a)$ denote the natural isometry. Define a map,
$H:(S,a)\times [0,l] \to (S,a)$, where $l=|xq|$, by $H(x,t)=i\circ
\pi\circ\Psi_{a}^t(x)$.  Since $|\Psi_a^t(x),\Psi_a^{t'}(x')|\leq
|\Psi_a^t(x),\Psi_a^t(x')| +|\Psi_a^t(x'),\Psi_a^{t'}(x')|\leq
|xx'|+|t-t'|$, $H$ is continuous. Since $H(*,0)=id$, $H(x,l)$ is
onto. Recall that given two 1-Lipschitz onto maps between two
compact metric spaces, $g:X\to Y$ and $h:Y\to X$, then $g$ and $h$
are isometries. Since $\Psi_{a}^t$ and $\pi$ are 1-Lipschitz, the
desired result follows.
\end{proof}

If $a=0$, consider $\Sigma_qA$, from the proof of Lemma 2.8, we get
that there are $u,v\in \phi_{[pq]}(N')$ ($u$ may be equal to $v$),
such that $|u\ua_q^x|=|v\ua_q^x|=\frac\pi2$. Let $x_i\in [xq],
y_i\in ((S, a_i)\cap \pi^{-1}(q))\subset F_{\phi^{-1}_{[pq]}(u)}$,
where $a_i\neq 0$, and $x_i\to q$, $y_i\to q$, as $i\to \infty$.
\begin{claim}
There exist $\bar y_i\in [x_iy_i]\cap F$, and $\bar y_i\not\in S$,
such that $[x_i\bar y_i]^{\circ}\cap F=\emptyset$.
\end{claim}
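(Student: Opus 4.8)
The claim asserts that on the segment $[x_iy_i]$ — which joins a point $x_i \in [xq]$ (so $x_i \notin F$, since $[xq]^\circ \cap F = \emptyset$) to a point $y_i \in (S,a_i) \cap \pi^{-1}(q) \subset F$ — the first point at which the segment meets $F$, moving from $x_i$, is a point $\bar y_i$ lying in $F \setminus S$. The idea is to exploit the local separation property of $F$ near $q$ together with a continuity/connectedness argument. First I would fix $i$ large so that $x_i, y_i$ both lie inside the convex neighborhood $\bar U_q$ of Lemma 2.11, inside which $B_F(q,r)$ separates $B(q,r)$ into the two components $G_{q1}^\circ, G_{q2}^\circ$. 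Since $[xq]$ is contained in an extended maximal gradient curve of $\dist_F$, which by construction has empty intersection with $F$ and lies (locally near $q$) in one of the two components, say $G_{q1}^\circ$; thus $x_i \in G_{q1}^\circ$ for $i$ large. On the other hand $y_i \in (S,a_i) \subset F_{\phi^{-1}_{[pq]}(u)} \subset F$, and in fact $y_i \notin S$ since $a_i \neq 0$.

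Now I would look at the segment $[x_iy_i]$ and let $\bar y_i$ be the point on it closest to $x_i$ that lies in $F$; this exists because $F$ is closed, and $\bar y_i \neq x_i$ since $x_i \notin F$. By construction $[x_i\bar y_i]^\circ \cap F = \emptyset$, which gives the last assertion of the claim for free (modulo confirming $\bar y_i$ is genuinely a "first hit" and not, e.g., that the segment re-enters — but taking the infimum of $t$ with $[x_iy_i](t) \in F$ handles this exactly). It remains to show $\bar y_i \notin S$. Here I would argue as follows: the portion $[x_i\bar y_i]^\circ$ lies in $D = A \setminus F$, and more precisely, since it starts in the component $G_{q1}^\circ$ and never meets the separating set $B_F(q,r) \supset F \cap \bar U_q$ until reaching $\bar y_i$, the whole open segment stays in $G_{q1}^\circ$; hence $\bar y_i \in \overline{G_{q1}^\circ} \cap F \subset \partial \bar U_q \cup B_F(q,r)$-type boundary, i.e., $\bar y_i$ lies in the relative frontier of $F$ within $\bar U_q$. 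The key point is that $\bar y_i$ is approached by points of $D$ lying on one side; if $\bar y_i$ were in $S$, then $\bar y_i \in \partial F_{v_1}$ (the boundary of the flat strip), and near a boundary point of $F_{v_1}$ the set $F_{v_1}$ does not locally separate — or rather, the local geometry of $\Sigma_{\bar y_i}A$ (by Proposition (2.1.1), (2.1.3) and the splitting $\Sigma_{\bar y_i}F = S(\Sigma_{\bar y_i}S)$) forces the segment from $x_i$ — whose direction at $\bar y_i$ makes a definite angle relation with $\Sigma_{\bar y_i}F$ — to be tangent to $F$ in a way that contradicts $[x_i\bar y_i]^\circ \cap F = \emptyset$ combined with $y_i \in F$ on the far side. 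More concretely I expect to use that $\ua_q^x$ makes angle exactly $\pi/2$ with both $u,v \in \phi_{[pq]}(N')$ (this is what was extracted from the proof of Lemma 2.8), and to transport this angle condition along $[x_iy_i]$ via comparison to rule out $\bar y_i \in S$.

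The cleanest route is probably a direct contradiction argument: suppose $\bar y_i \in S$. Then $\bar y_i = \pi(\bar y_i) = \bar x$ for some soul point, and I would consider the comparison triangle with vertices $x_i, \bar y_i, y_i$. Since $y_i \in (S,a_i) \cap \pi^{-1}(\bar y_i)$ with $a_i \neq 0$, the minimal geodesic $[\bar y_i y_i]$ runs along the $\mathbb{R}^1_+$-factor of $F_{\phi^{-1}_{[pq]}(u)} \cong S \times \mathbb{R}^1_+$ (Key Lemma 0.7), and by the first variation formula at $\bar y_i \in S$ the angle $\angle(\ua_{\bar y_i}^{x_i}, \ua_{\bar y_i}^{y_i})$ relates to $|\ua_{\bar y_i}^{x_i}\Sigma_{\bar y_i}S|$; but then $[x_iy_i]$ passing through $\bar y_i$ with $[x_i\bar y_i]^\circ \cap F = \emptyset$ would make $\{x_i,\bar y_i,y_i\}$ span a configuration whose angle at $\bar y_i$ sums correctly to force, via Lemma 2.6, an additional flat totally geodesic triangle through $\bar y_i$ contradicting the join structure of $\Sigma_{\bar y_i}A$ (Proposition (2.1.3)) — this is exactly the style of argument used in the proof of Lemma 2.5, and I would model it on that proof. \textbf{The main obstacle} I anticipate is bookkeeping the angle/separation interplay precisely enough: showing rigorously that $[x_i\bar y_i]^\circ$ stays in a single component $G_{q1}^\circ$ (so that $\bar y_i$ is a "transversal" first hit of $F$, not a tangential grazing along $S$), and converting "approached from one side by $D$" into the angle identity needed to invoke Lemma 2.6. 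Handling the possibility that $\bar y_i$ lands on $\partial \bar U_q$ rather than in $B_F(q,r)$ — which would not immediately be a contradiction — requires choosing $U_b, \bar U_q$ small relative to $|x_iy_i|$, i.e., sending $i \to \infty$ first; I would make the radius $r$ and the neighborhood $\bar U_q$ depend on $i$ so that $x_i, y_i$ are well inside, forcing $\bar y_i \in B_F(q,r)$, hence $\bar y_i \notin S$ unless it lies on the relative boundary of the $(n-1)$-sphere $Fr(B_F(q,r))$, which one rules out for the same angle reason.
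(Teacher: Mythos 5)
Your first step (take $\bar y_i$ to be the first point of $F$ hit along $[x_iy_i]$ starting from $x_i$; it exists since $F$ is closed and $x_i\notin F$, and it automatically satisfies $[x_i\bar y_i]^{\circ}\cap F=\emptyset$) is fine and matches what the paper implicitly does. But the substance of the claim is the assertion $\bar y_i\notin S$, and there you have a genuine gap: you offer two sketches and complete neither. Your separation route founders on the fact that Lemma 2.11 is only established for base points $q\in(S,a)$ with $a\neq 0$, and in any case landing in the ``relative frontier of $F$'' does not exclude $S$, since $S\subset F$ and every point of $S$ lies in that frontier. Your Lemma 2.5/Lemma 2.6 route at $\bar y_i$ needs an angle identity of the form $\angle(\ua_{\bar y_i}^{x_i},\cdot)=\tilde\angle(\cdot)$ coming from a first variation formula, but $\bar y_i$ is not the foot of any distance function here ($[x_i\bar y_i]^{\circ}\cap F=\emptyset$ does not make $\bar y_i$ the nearest point of $F$ or of $S$ to $x_i$), so no such identity is available; you flag this yourself as the ``main obstacle'' without resolving it.

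The paper's argument is much shorter and localized at $q$, not at $\bar y_i$: suppose some $z\in[x_iy_i]\cap F$ lies in $S$. Since $x_i\in[xq]$ and $q$ realizes $|xF|$, the point $q$ also realizes $|x_iF|$, so $|x_iq|\leq|x_iz|$ (as $z\in F$); since $y_i\in(S,a_i)\cap\pi^{-1}(q)$, the point $q$ realizes $|y_iS|$, so $|y_iq|\leq|y_iz|$ (as $z\in S$). Adding, $|x_iy_i|=|x_iz|+|zy_i|\geq|x_iq|+|qy_i|\geq|x_iy_i|$, so $[x_iq]\cup[qy_i]$ is a minimal geodesic and must have angle $\pi$ at $q$; but by the choice of $y_i$ one has $\angle(\ua_q^{x_i},\ua_q^{y_i})=\frac{\pi}{2}$, a contradiction. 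Hence no point of $[x_iy_i]\cap F$ lies in $S$, and in particular your first-hit point $\bar y_i$ works. The key input you missed is that $q$ is simultaneously the foot of $x_i$ on $F$ and the foot of $y_i$ on $S$, which converts the hypothesis $z\in S\cap F$ into a triangle-inequality rigidity at $q$, where the perpendicularity $|u\ua_q^x|=\frac{\pi}{2}$ is actually known.
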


Suppose that $\bar y_i\in (S, b_i)$ with $b_i \neq 0$. Let
$\Psi_{b_i}^t:(S,b_i)\to A$ denote the ``flow" such that,
$\Psi_{b_i}^{t}(\bar y_i), t\leq \varepsilon$ ($\varepsilon$ small
enough), is in the same component (defined after Lemma 2.11) as
$[y_i'\bar y_i]\subset [x_i\bar y_i]$ with $y_i',\bar y_i$ close
enough, and let
$\gamma_i(t)=\Psi_{b_i}^t(\bar y_i)$. 
Then %
by the construction of $\gamma_i(t)$, one obtains that
$|\gamma_i(t),[x_ix](t)|\leq |\bar y_ix_i|$, thus $\gamma_i(t)\to
[xq](t)$, for any $0\leq t\leq |xq|$, as $i\to\infty$. Hence by
passing to a subsequence we can suppose that
$\Psi_{b_i}^{|xq|}((S,b_i))\overset{isom}{\cong}S$ converge to
$S_x\overset{isom}{\cong}S$, with $x\in S_x$. 
Since $\pi|_{\Psi_{b_i}^{|xq|}((S,b_i))}$ is an isometry, so is
$\pi|_{S_x}$.

\begin{proof}[Proof of Claim 2.16]
It suffices to show that if $z\in [x_iy_i]\cap F$, then $z\not \in
S$. Argue by contradiction, suppose $z\in S$. Since $|x_iq|\leq
|x_iz|$ and $|y_iq|\leq|y_iz|$, we have that $[x_iq]\cup [qy_i]$ is
a minimal geodesic, which is impossible, because by the choice of
$y_i$ we have that $\ua_q^{x_i}\perp \ua_q^{y_i}$.
\end{proof}
Now we are ready for the proof of Theorem A.

\begin{proof}[Proof of Theorem A]
Case 1:  $\mathscr{F}=A$. See the proof following Lemma 1.1.

Case 2:  $\mathscr{F}\neq A$. By Lemma 1.5, we can assume that
$\pi_1(S)=0$. For $x\in A$ and for any $\bar y\in S$. Let
$y=
S_x\cap \pi^{-1}(\bar y)$. Since $\pi|_{S_x}:S_x\to S$ is an
isometry, $|xy|=|\bar x\bar y|$, and this shows that $\pi:A\to S$ is
a submetry.
\end{proof}
\section{Application}

We will prove Corollary 0.6, and we will show that in Theorem A if
$\mathscr{F}=A$, then $\pi$ is a bundle map. First we recall the
following lemma.
\begin{lemma}
Let $A\in \Alex^4(\kappa)$. Then the following statements are
equivalent:\\
(1) $A$ is topologically nice.\\
(2) $A$ is topologically regular.\\
(3) $A$ is a topological manifold.
\end{lemma}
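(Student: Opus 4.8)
The plan is to prove the cycle $(1)\Rightarrow(2)\Rightarrow(3)\Rightarrow(1)$. The implications $(1)\Rightarrow(2)$ and $(2)\Rightarrow(3)$ are general and already recorded in Section~1.3: topologically nice trivially implies topologically regular (restrict the definition to the first iterated space of directions), and a topologically regular Alexandrov space is a topological manifold because each point has a neighborhood homeomorphic to a Euclidean ball (the cone over a sphere is a ball, via Perelman's stability theorem, or directly). So the whole content is in $(3)\Rightarrow(1)$, and more precisely in showing that in dimension $4$ the topological manifold hypothesis forces \emph{all} the iterated spaces of directions — not just $\Sigma_pA$ — to be spheres.

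First I would reduce to a statement about links in low dimensions. Suppose $A\in\Alex^4(\kappa)$ is a topological $4$-manifold. For $p\in A$, $T_pA$ is the Euclidean cone $C(\Sigma_pA)$ with $\Sigma_pA\in\Alex^3(1)$, and since $A$ is a manifold, $T_pA\cong\mathbb R^4$, so $C(\Sigma_pA)\cong\mathbb R^4$; hence $\Sigma_pA$ is a simply connected homology $3$-sphere (by the local homology computation at the cone point) which is also a closed $3$-manifold (an Alexandrov space of dimension $\leq 3$ with no boundary is a manifold), and therefore $\Sigma_pA\cong S^3$ by the Poincar\'e conjecture. Now I must descend one dimension: for a direction $v\in\Sigma_pA$, the space $\Sigma_v(\Sigma_pA)\in\Alex^2(1)$ is a $2$-dimensional Alexandrov space without boundary, hence automatically a closed surface; its cone $C(\Sigma_v\Sigma_pA)\cong T_v\Sigma_pA$, and since $\Sigma_pA\cong S^3$ is a manifold this cone is $\mathbb R^3$, forcing $\Sigma_v\Sigma_pA$ to be a simply connected surface, i.e. $S^2$. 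Iterating once more, every $\Sigma_w(\Sigma_v\Sigma_pA)\in\Alex^1(1)$ is either $S^1$ or an interval; but it has no boundary (it sits inside the manifold $S^2$ as a space of directions), so it is $S^1$. All further iterated spaces of directions are then forced to be $S^1$ or points, which are spheres. Thus every point of $A$ is topologically nice, proving $(3)\Rightarrow(1)$.

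The main obstacle I anticipate is the \textbf{descent step}: one must know that (a) a finite-dimensional Alexandrov space of dimension $\leq 3$ without boundary is a topological manifold (so that the spheres obtained are literally manifolds, and the Poincar\'e-conjecture input applies), and (b) that the space of directions of a point in a topological manifold is itself a manifold with the ``expected'' local homology, which is what lets the cone-is-Euclidean-space condition be converted into ``the link is a simply connected manifold of that dimension.'' Point (a) for dimension $3$ relies on the structure theory of $3$-dimensional Alexandrov spaces (no boundary $\Rightarrow$ topological manifold), and the conclusion $\Sigma_pA\cong S^3$ ultimately rests on Perelman's solution of the Poincar\'e conjecture; this is the only deep external input, and it is exactly what makes the dimension restriction $n=4$ essential. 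Once these facts are in hand, the rest is the routine downward induction on dimension sketched above, and the equivalence is complete.
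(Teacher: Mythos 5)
Your reductions $(1)\Rightarrow(2)\Rightarrow(3)$ and your overall strategy for $(3)\Rightarrow(1)$ --- extract simple connectivity and the homology--sphere property of $\Sigma_pA$ from the manifold hypothesis, identify the iterated links, and invoke the $3$-dimensional Poincar\'e conjecture --- coincide with the paper's. However, the step you yourself flag as point (a) contains a genuine error: it is \emph{not} true that an Alexandrov space of dimension $3$ without boundary is a topological manifold. The spherical suspension $S(\mathbb{R}P^2)$ is a $3$-dimensional Alexandrov space of curvature $\geq 1$ with empty boundary (the link of each suspension point is $\mathbb{R}P^2$, which has no boundary), yet it fails to be a manifold at those two points because the cone over $\mathbb{R}P^2$ has the wrong local homology. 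In general a $3$-dimensional Alexandrov space without boundary may contain finitely many points whose space of directions is $\mathbb{R}P^2$, and exactly these are non-manifold points. So your assertion that $\Sigma_pA$ is a closed $3$-manifold is unjustified, and without it the Poincar\'e conjecture cannot be applied. Your subsequent descent step also has the logical order reversed: you deduce $\Sigma_v\Sigma_pA\cong S^2$ \emph{from} the fact that $\Sigma_pA$ is a manifold, whereas ruling out $\Sigma_v\Sigma_pA\cong\mathbb{R}P^2$ is precisely what is needed to establish that $\Sigma_pA$ is a manifold in the first place.

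This is exactly where the paper leans on Wu's results [41]: besides $\pi_1(\Sigma_pA)=0$, Proposition 3.1 there gives $H_*(\Sigma_v\Sigma_pA)=H_*(S^2)$ for every $v$ at a manifold point $p$, which excludes $\mathbb{R}P^2$ since $H_1(\mathbb{R}P^2;\mathbb Z)=\mathbb Z_2$; only after that is $\Sigma_pA$ known to be a simply connected closed $3$-manifold to which the Poincar\'e conjecture applies. Your argument can be repaired without that citation: since $p$ is a manifold point of a $4$-manifold, $\Sigma_pA$ is a simply connected homology $3$-sphere, and a closed $3$-dimensional Alexandrov space containing a point with link $\mathbb{R}P^2$ has $H_3(\,\cdot\,;\mathbb Z)=0$ (remove open cone neighborhoods of the $\mathbb{R}P^2$-points to obtain a compact non-orientable $3$-manifold with nonempty boundary and run Mayer--Vietoris), so $\Sigma_pA$ has no such points, is a manifold, and is $S^3$; the links $\Sigma_v\Sigma_pA$ are then $S^2$ as you say. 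But as written, the proof rests on a false general statement at its crucial step.
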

Lemma 3.1 is pointed out by Kapovitch in \cite{[Ka2]}, for
completeness we include a proof here.

\begin{proof}
It is obvious that (1) $\Rightarrow$ (2) and (2) $\Rightarrow$ (3).

(3) $\Rightarrow$ (1): Any manifold point $p\in A$ satisfies that
$\pi_1(\Sigma_pA)=0$ (cf. [41, Theorem 1.1 (2)]), and that
$H_*(\Sigma_v\Sigma_pA)=H_*(S^2)$ ([41, Proposition 3.1]). Hence
$\Sigma_v\Sigma_pA$ is an $S^2$, and therefore $\Sigma_pA$ is a
simply connected manifold. By 3-dimension Poincar$\acute{\rm{e}}$
conjecture, $\Sigma_pA$ is homeomorphic to $S^3$, and thus $A$ is
topologically nice.
\end{proof}
\begin{proof}[Proof of Corollary 0.6]
Argue by contradiction, suppose that $\dim(S)> 0$. By [38, 9.8],
$\dim(S)\neq 1,3$. If $\dim(S)=2$, we may assume that $q\in A$ such
that the curvature are positive in $B(q,r)$. Then the modified
Busemann function $-\exp^{f}$ is strictly concave (\cite{[CDM2]}),
and thus $\pi|_{B(q,r)}:B(q,r)\to B(\pi(q),r)$ is strictly distance
decreasing; a contradiction to Theorem A.
\end{proof}
\begin{corollary}
Let the assumptions be as Theorem A. Suppose $\mathscr{F}=A$. Then
$\pi:A\to S$ is a bundle map with fiber $\mathbb R^2$.
\end{corollary}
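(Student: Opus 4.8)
The plan is to show that under the hypothesis $\mathscr{F}=A$, the Sharafutdinov retraction $\pi$ is a locally trivial fiber bundle with fiber $\mathbb{R}^2$. First I would recall that when $\mathscr{F}=A$, every point $x\in A$ lies on a special normal ray $\gamma$ emanating from $\bar x=\pi(x)\in S$, and by Lemma 1.1 any two such rays over $p\ne r$ in $S$ are connected by a flat totally geodesic strip. Since $\operatorname{codim}(S)=2$ and $A$ is topologically nice, the space of directions normal to $S$ at each $p\in S$ is controlled by Proposition 2.1: the normal part $\Sigma_1^p$ is either $S^1(r)$ with $r\le 1$, an arc $[ab]$, a point, or an antipodal pair, and $[\Sigma_0^p\Sigma_1^p]=\Sigma_0^p*\Sigma_1^p$. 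When $\mathscr{F}=A$ one must have $\Uparrow_p^{\partial\Omega_c}=\Sigma_1^p$ (otherwise there would be points off $\mathscr{F}$), so in fact $\Sigma_1^p$ must be a closed $1$-manifold or a full arc realizing all the normal directions; since $\pi^{-1}(p)$ is then swept out by the rays over the $2$-dimensional normal cone, $\pi^{-1}(p)\cong \mathbb{R}^2$.

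The key step is to produce local trivializations. For $p\in S$ choose a small convex ball $U\ni p$ in $S$; over $U$ I would use the maps $\phi_{[pq]}:\Uparrow_p^{\partial\Omega_c}\to\Uparrow_q^{\partial\Omega_c}$ introduced before Lemma 2.2, which are isometries, together with the argument in the proof of Key Lemma 0.7 showing that $\phi_{[pq]}$ is locally independent of the choice of $[pq]$ (using $\pi_1$-triviality of $U$ and the classification of $\Sigma_1$). This gives a well-defined identification of the normal ray-fans over all $q\in U$ with that over $p$. Then I would define $\Phi\colon \pi^{-1}(U)\to U\times\pi^{-1}(p)$ by sending $\exp_q(t\,\phi_{[pq]}(v))$ to $(q,\exp_p(tv))$; the flat-strip structure (Lemma 1.1, Theorem 0.5) guarantees this is a well-defined homeomorphism commuting with $\pi$ and with projection to $U$, so $\pi$ is a fiber bundle with fiber $\pi^{-1}(p)\cong\mathbb{R}^2$. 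Transition maps between overlapping charts are given by composites of the $\phi$'s, hence continuous.

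The main obstacle I anticipate is verifying that the assignment $x\mapsto(\bar x, \text{point in fiber over } p)$ is genuinely continuous and bijective near points of $S$ itself, where the ray parameter $t\to 0$ and the identification via $\phi_{[pq]}$ must match up with the smooth/Lipschitz structure of the normal cone; this is where one really needs Proposition 2.1 to rule out branching of the strips and to know $[\Sigma_0^p\Sigma_1^p]$ is a genuine join, so that the strips fit together without overlap. A secondary point is checking that $\pi^{-1}(p)$ is homeomorphic to $\mathbb{R}^2$ and not some other $2$-dimensional contractible space: this follows since $\pi^{-1}(p)$ is the union of rays over the normal cone $C(\Sigma_1^p)$, which by Proposition 2.1 is flat (a cone over $S^1(r)$ or an arc), and the strips realize it isometrically as a metric cone, hence homeomorphically as $\mathbb{R}^2$. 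Once these continuity and identification issues are dispatched, local triviality and therefore the bundle statement follow immediately.
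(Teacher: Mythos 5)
Your route is genuinely different from the paper's, and it has two real gaps. The paper does not construct explicit local trivializations at all: it verifies the hypotheses of Seidman's criterion (Theorem 3.3) by producing, for each pair of nearby points $p,q\in S$ and one \emph{fixed} $[pq]$, a single homeomorphism $h:\pi^{-1}(p)\to\pi^{-1}(q)$ (via the unique flat strips of Lemma 1.1.2 and Proposition 2.1.3) with $|h(x)x|=|pq|$, and then invokes local path-connectedness of the homeomorphism group of the fiber (Edwards--Kirby). The strength of that criterion is precisely that no coherence among the $h$'s is needed. Your chart $\Phi$, by contrast, requires $\phi_{[pq]}$ to be independent of the choice of $[pq]$ and to vary continuously in $q$ over $U$ (minimal geodesics in $S$ need not be unique, even locally, in an Alexandrov space). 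You propose to borrow the local-independence argument from the proof of Key Lemma 0.7, but that argument is only valid under the hypothesis $\mathscr{F}\neq A$: the subcase ``every $\bar g_i$ is a rotation'' is excluded there exactly by deducing $\Uparrow_x^{\partial\Omega_c}\neq S^1$ from $\mathscr{F}\neq A$ and Lemma 2.2. In the present case $\mathscr{F}=A$ one expects $\Uparrow_x^{\partial\Omega_c}=S^1$, small rotational holonomy of the normal directions cannot be ruled out (and should not be --- nontrivial normal holonomy already occurs for Riemannian souls of codimension two), so this step of your construction is unsupported and your trivialization is not well defined as stated.

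The second gap is the identification $\pi^{-1}(p)\cong\mathbb R^2$. You assert it because the fiber is ``swept out by rays over the $2$-dimensional normal cone,'' but you have not shown that $\Uparrow_p^{\partial\Omega_c}$ is a whole circle ($\mathscr{F}=A$ only controls the union over all $p$; a priori $\Uparrow_p^{\partial\Omega_c}$ is a closed subset of $\Sigma_1^p$, and $\Sigma_1^p$ itself could be an arc $[ab]$, whose cone is a sector, not a plane), nor that the resulting continuous bijection from the cone onto the fiber is a homeomorphism. The paper instead gets manifoldness of $\pi^{-1}(p)$ at a regular point $p\in S$ from Theorem A ($\pi$ is a submetry) together with the cited fiber-regularity theorem, and then concludes $\pi^{-1}(p)\overset{homeo}{\simeq}\mathbb R^2$ from contractibility of the fiber. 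To salvage your approach you would need to prove that $\Uparrow_p^{\partial\Omega_c}=S^1(r)$ for all $p$ under $\mathscr{F}=A$ and that the normal exponential is a proper injection, and, for local triviality, replace the $\phi_{[pq]}$-coherence argument by something that does not rely on $\mathscr{F}\neq A$ --- at which point the Seidman-type argument of the paper is the more economical path.
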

Note that Corollary 3.2 doesn't hold, if one removes the condition
of topologically nice see [42, 14.8].

In the proof, we shall apply the following theorem which is a
sufficient condition for a bundle map:

\begin{theorem}[{\rm{\cite{[Se]}}}]
Let $X,\ X_0$ be two metric spaces and let $f:X\to X_0$ be a
continuous onto map. Suppose that for any $p\in X_0$ and
$\varepsilon>0$ there exits a $\delta(p,\varepsilon)>0$ such that
for any $q\in B(p,\delta)$, there is a homeomorphism $h:f^{-1}(p)\to
f^{-1}(q)$, with $|h(x)x|\leq \varepsilon$, for any $x\in
f^{-1}(p)$. If the fiber $F$ is locally compact and separable, and
the homeomorphism group of $F$ (with some natural topology) is
locally path connected, then $F$ is a Serre fibration. If in
addition, $X_0$ is finite dimensional $ANR$, then $f$ is a locally
trivial bundle map.
\end{theorem}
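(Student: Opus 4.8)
The plan is to verify the hypotheses of Theorem 3.3 for the map $\pi\colon A\to S$, which is continuous and onto by Theorem 0.3. Two points require genuine work: that every fiber $\pi^{-1}(\bar x)$ is homeomorphic to $\mathbb R^2$, and the approximate local triviality condition, that for each $\bar x\in S$ and $\varepsilon>0$ there is $\delta>0$ such that every $\bar y\in B(\bar x,\delta)\cap S$ carries a homeomorphism $h\colon\pi^{-1}(\bar x)\to\pi^{-1}(\bar y)$ with $|h(z)z|\le\varepsilon$ for all $z$. The remaining hypotheses are standard: $\mathbb R^2$ is locally compact and separable, its homeomorphism group (with the compact--open topology) is locally path connected, and $S$ is a compact finite--dimensional Alexandrov space, hence a finite--dimensional ANR (indeed a closed topological manifold, since $\Sigma_{\bar x}S$ is homeomorphic to a sphere by Proposition (2.1.1)).

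\emph{The fiber.} Fix $\bar x\in S$ and write $n=\dim A$, so $\dim S=n-2$. Since $\mathscr{F}=A$, by Lemma (1.1.1) and the proof of Theorem A for the case $\mathscr{F}=A$, the fiber $\pi^{-1}(\bar x)$ is precisely the union of the special normal rays issuing from $\bar x$, i.e. the image of $C(\Uparrow_{\bar x}^{\po})\to A$ sending $(v,t)$ to $\exp_{\bar x}(tv)$. This map is onto by definition of $\mathscr{F}$, continuous, proper (as $\Uparrow_{\bar x}^{\po}$ is compact), and injective off the vertex, because geodesics in an Alexandrov space do not branch: if two special normal rays from $\bar x$ met at $z\neq\bar x$, extending them past $z$ and splicing would produce two minimal geodesics from $\bar x$ agreeing on a subsegment, hence everywhere. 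Thus $\pi^{-1}(\bar x)\cong C(\Uparrow_{\bar x}^{\po})$. Next, using $\mathscr{F}=A$, Lemma (1.1.2), and the join structure $[\Sigma_0^{\bar x}\Sigma_1^{\bar x}]=\Sigma_0^{\bar x}*\Sigma_1^{\bar x}$ of Proposition (2.1.3): any direction at $\bar x$ is a limit of directions $\uparrow_{\bar x}^{z}$ with $z$ lying on a flat totally geodesic strip joining $\bar x$ to a nearby point of $S$, so it lies on a minimal geodesic from $\Sigma_0^{\bar x}$ to $\Uparrow_{\bar x}^{\po}$; this forces $\Sigma_{\bar x}A=\Sigma_0^{\bar x}*\Sigma_1^{\bar x}$ and, reading off the $\Sigma_1$--component, $\Uparrow_{\bar x}^{\po}=\Sigma_1^{\bar x}$ (which is closed). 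Since $A$ is topologically nice, $\Sigma_{\bar x}A\cong S^{n-1}$, while $\Sigma_0^{\bar x}\cong S^{n-3}$ by Proposition (2.1.1); comparing $S^{n-3}*\Sigma_1^{\bar x}\cong S^{n-1}$ with the list of Proposition (2.1.2)---the options $[ab]$, $\{v\}$, $\{v_1,v_2\}$ respectively yield a disk, a disk, and $S^{n-2}$---forces $\Sigma_1^{\bar x}\cong S^1$. Hence $\pi^{-1}(\bar x)\cong C(S^1)\cong\mathbb R^2$.

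\emph{Approximate local triviality.} Given $\bar x$ and $\varepsilon>0$, put $\delta=\varepsilon$. For $\bar y\in B(\bar x,\delta)\cap S$ choose any minimal geodesic $[\bar x\bar y]\subset S$ and define $h\colon\pi^{-1}(\bar x)\to\pi^{-1}(\bar y)$ by $h(\bar x)=\bar y$ and $h(\exp_{\bar x}(tv))=\exp_{\bar y}\bigl(t\,\phi_{[\bar x\bar y]}(v)\bigr)$ for $v\in\Uparrow_{\bar x}^{\po}$ and $t>0$, where $\phi_{[\bar x\bar y]}\colon\Uparrow_{\bar x}^{\po}\to\Uparrow_{\bar y}^{\po}$ is as in Section 2.1. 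By Lemma 2.2, $\phi_{[\bar x\bar y]}$ is an isometry, so in the cone coordinates $h$ is a homeomorphism $C(\Uparrow_{\bar x}^{\po})\to C(\Uparrow_{\bar y}^{\po})$, its inverse built from the same strips and the reversed geodesic $[\bar y\bar x]$. Moreover $\exp_{\bar x}(tv)$ and $\exp_{\bar y}(t\,\phi_{[\bar x\bar y]}(v))$ are the points at height $t$ on the two vertical edges of the flat strip spanned by the special normal ray in direction $v$ and $[\bar x\bar y]$ (Lemma (1.1.2)), so $|h(z)z|\le|\bar x\bar y|\le\varepsilon$ for every $z$. This is exactly the hypothesis of Theorem 3.3, and therefore $\pi$ is a locally trivial bundle map with fiber $\mathbb R^2$.

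The main obstacle is the fiber identification: this is where both hypotheses are genuinely used---$\mathscr{F}=A$ to realize $\pi^{-1}(\bar x)$ as a cone over $\Uparrow_{\bar x}^{\po}$ and to pin down $\Sigma_{\bar x}A$ as the join $\Sigma_0^{\bar x}*\Sigma_1^{\bar x}$, and topological niceness to identify $\Sigma_{\bar x}A$ with an honest $S^{n-1}$ and thus exclude the non--circle cases for $\Sigma_1^{\bar x}$ in Proposition (2.1.2). This is consistent with the remark following the corollary that the conclusion fails without topological niceness. Once the fibers are $\mathbb R^2$, Theorem 3.3 converts the approximate, path--dependent identifications $h$ into a genuine bundle structure, which is why one does not need a holonomy--triviality statement à la Key Lemma 0.7 here.
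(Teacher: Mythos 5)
Your proposal does not prove the statement in question. Theorem 3.3 is a general result in the topology of fiber spaces, attributed to Seidman \cite{[Se]}: for \emph{arbitrary} metric spaces $X$, $X_0$ and a continuous onto map $f:X\to X_0$ satisfying the approximate fiber-homeomorphism condition (a ``completely regular mapping''), with locally compact separable fiber whose homeomorphism group is locally path connected, the map $f$ is a Serre fibration, and a locally trivial bundle when $X_0$ is a finite-dimensional ANR. A proof of \emph{this} would have to construct the (local) trivializations from the given $\varepsilon$-close fiber homeomorphisms --- typically by a Dyer--Hamstrom-type selection argument producing a continuous map from a neighborhood of $p\in X_0$ into the homeomorphism group of the fiber, which is where the local path-connectedness of that group and the ANR hypothesis on the base are actually used. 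The paper does not prove Theorem 3.3; it cites it.

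What you have written instead is a verification of the \emph{hypotheses} of Theorem 3.3 for the Sharafutdinov retraction $\pi:A\to S$ in the case $\mathscr{F}=A$, followed by an invocation of Theorem 3.3 --- that is, a proof of Corollary 3.2, not of Theorem 3.3. (As a proof of Corollary 3.2 your argument is reasonable and in places more explicit than the paper's: you identify the fiber directly as $C(\Uparrow_{\bar x}^{\po})$ with $\Uparrow_{\bar x}^{\po}\cong S^1$, whereas the paper deduces $\pi^{-1}(p)\overset{\rm homeo}{\simeq}\mathbb R^2$ from [32, Theorem D] together with the contractibility of the fiber. But that is a different statement.) To address the statement actually posed, you would need to supply the general-topology argument converting the family of $\varepsilon$-close fiber homeomorphisms into a fibration/bundle structure; nothing in your proposal does this, and the geometric input about Alexandrov spaces, souls, and flat strips is irrelevant to it.
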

\begin{proof}[Proof of Corollary 3.2]
Since $\mathscr{F}=A$, for any $p, q\in S$ and a fixed $[pq]$, there
is a homeomorphism $h:\,\pi^{-1}(p)\to \pi^{-1}(q)$,  such that
$[xp],[pq],[qh(x)]$ determine a unique flat rectangle (Lemma (1.1.2)
and Proposition (2.1.3)). Then $|h(x)x|=|pq|$.

Since $A$ is topologically nice and $\pi$ is a submetry, if $p\in S$
is a regular point (i.e., $T_pS$ is isometric to $\mt R^{n-2}$),
then $\pi^{-1}(p)$ is a topological manifold ([32, Theorem D], note
that the proof is local, and thus apply to non-compact cases). Since
$\pi:\pi^{-1}(p)\to p$ is a deformation retraction,
$\pi^{-1}(p)\overset {homeo}{\simeq}\mt R^2$.

And by [11, 7.3], we know that the homeomorphism group
$homeo(\pi^{-1}(p))$ is locally path connected with the topology in
Theorem 3.3. By now we are able to apply Theorem 3.3 to conclude
that $\pi$ is a bundle map.
\end{proof}
\begin{remark}
Inspired by \cite{[Wo]}, it seems that for Case 2 of Theorem A, when
$\pi_1(A)=0$, $A$ isometrically splits.
\end{remark}

\section{Structure of Space of Directions}
Our main efforts in this section is to prove Proposition 2.1, and
thus complete the proof of Theorem A. We point out that Theorem 4.4,
which classifies certain isometric class in $\Alex^n(1)$, may have
independent interest.

First we recall the following:
\begin{definition}[{\rm{\cite{[BGP]}}}]
Let $X,\ Y\in \Alex(1)$. The join of $X$ and $Y$, $X*Y$, is defined
by $X\times Y \times [0,\frac{\pi}{2}]/\!\sim$, where $(x,y_1,0)\sim
(x,y_2,0)$ and $ (x_1,y,\frac{\pi}{2})\sim(x_2,y,\frac{\pi}{2})$,
with the metric: $\cos d((x_1,y_1,t),(x_2,y_2,s))=\cos t\cos
s\cos(|x_1x_2|)+\sin t\sin s\cos (|y_1y_2|)$.
\end{definition}
Then $X*Y\in \Alex(1)$ and $C(X*Y)=C(X)\times C(Y)$ (\cite{[BGP]}).
\begin{example}
$S^m*S^n=S^{m+n+1}$, where all the spheres are with the standard
metric with constant curvature $1$.
\end{example}

\begin{remark}
For the convenience of following use, we will make the following
conventions:

(1) Let 0-dim Alexandrov space with curvature bounded below by 1
without boundary be a space including two points with distance
$\pi$, one point is regraded as with boundary.

(2) If a subspace with restricted metric is isometric to an
Alexandrov space, we also say that it is convex, although when the
dimension is 0, there may not be a minimal geodesic in the subspace
joining two given points.

(3) When we say that two metric spaces are equal, we always mean
metrically, except otherwise stated. If there is no confusion, we
will not mention the metric.
\end{remark}

Let $A\in\Alex^n(1)$ and let $C\subset A$ be a closed convex subset
without boundary. Let $\hat \Sigma_pC=\{v\in \Sigma_pA\ |\
|v\Sigma_pC|\geq\frac{\pi}{2}\}$, and let $\hat T_pC=C(\hat
\Sigma_pC)$. Observe that if $A$ and $C$ are  Riemannian manifolds,
then $T_pA$ isometrically splits, i.e., $T_pA=T_pC\times \hat T_pC$
or equivalently $\Sigma_pA$ is isometric to the join of $\Sigma_pC$
and $\hat \Sigma_pC$ (\cite{[BGP]}). In Alexandrov geometry, such
property doesn't hold.

Observe that for $\Sigma\in \Alex^n(1)$ and $\Sigma_0,\
\Sigma_1\subset \Sigma$ convex closed subsets, if
$\Sigma=\Sigma_0*\Sigma_1$, then $\Sigma_0,\ \Sigma_1$ satisfy the
following conditions:

\noindent (1) For every $v\in\Sigma$, $v\in[v_0v_1]$, for some
$v_0\in \Sigma_0, \ v_1\in \Sigma_1$.

\noindent (2) $\dim(\Sigma_0)+\dim(\Sigma_1)+1= n$. ($\dim$ denotes
the Hausdorff dimension.)

In the following situation, conditions (1) and (2) are indeed
sufficient conditions for join.
\begin{theorem}
Let $\Sigma\in \Alex^n(1)$, let $\Sigma_0\subset \Sigma$ be a closed
convex subset without boundary, and let $\Sigma_1=\{v\in \Sigma\
|\hspace{2mm}|v\Sigma_0|\geq\frac{\pi}{2}\}$. Assume that
$\Sigma_1\neq \emptyset$ and $\Sigma_0,\ \Sigma_1$ satisfy (1) and
(2). If $\Sigma$ is topologically nice and homeomorphic to a sphere,
then $\Sigma=\Sigma_0*\Sigma_1$ 
and $\Sigma_i, i=0,1$, are topologically nice and homeomorphic to
spheres.
\end{theorem}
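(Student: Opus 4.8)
The plan is to verify the defining properties of the join metric directly on $\Sigma$, using conditions (1) and (2) together with the topological hypotheses. First I would fix $v_0 \in \Sigma_0$ and $v_1 \in \Sigma_1$ and analyze the minimal geodesic $[v_0 v_1]$ guaranteed by (1). The key preliminary observation is that, since $|v_1 \Sigma_0| \geq \frac{\pi}{2}$ and $\Sigma_0$ is locally convex, closed, without boundary, and of positive dimension (the degenerate $0$-dimensional case is handled separately via Remark 4.3(1)), Lemma 1.2 forces $|v_1 \xi| = \frac{\pi}{2}$ for \emph{every} $\xi \in \Sigma_0$. Thus every point of $\Sigma_1$ is at distance exactly $\frac{\pi}{2}$ from all of $\Sigma_0$, which is precisely the ``equatorial'' incidence one needs for a join decomposition. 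Next I would show that for $v \in [v_0 v_1]$, the pair $(v_0, v_1)$ realizing $v$ is essentially unique and that the distance from $v$ to $\Sigma_0$ (resp.\ $\Sigma_1$) is the expected spherical quantity; the first variation formula at $v$ together with the $\frac{\pi}{2}$-incidence should pin down $|v\,\xi|$ for $\xi \in \Sigma_0$ via the comparison triangle $\tilde\angle(\xi, v, v_1)$.

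The heart of the argument is the metric identity in Definition 4.1: given $v, w \in \Sigma$ with $v \in [v_0 v_1]$, $w \in [w_0 w_1]$ ($v_i, w_i \in \Sigma_i$), one must show
\[
\cos|vw| = \cos|v v_1|\cos|w w_1|\cos|v_0 w_0| + \sin|v v_1|\sin|w w_1|\cos|v_1 w_1|,
\]
where $|v v_1| = \frac{\pi}{2} - |v v_0|$ etc. I would establish the inequality ``$\geq$'' first: this is the direction that follows from Toponogov's globalization, applying hinge/triangle comparison repeatedly along a quadrilateral with vertices near $v_0, v_1, w_0, w_1$, and using that $\Sigma_0, \Sigma_1$ are themselves Alexandrov spaces with curvature $\geq 1$ so that their internal distances behave well. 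The reverse inequality ``$\leq$'' is the delicate one: here I expect to need the dimension condition (2) crucially, via Perelman's stability theorem or a dimension/volume count, to rule out ``extra'' directions that would make $\Sigma$ strictly larger than $\Sigma_0 * \Sigma_1$. Concretely, the natural map $\Sigma_0 * \Sigma_1 \to \Sigma$ induced by sending $(v_0, v_1, t)$ to the point at parameter $t$ on $[v_0 v_1]$ is $1$-Lipschitz and surjective by (1); condition (2) makes it a map between spaces of equal Hausdorff dimension, and the topological-nice/sphere hypothesis should upgrade surjectivity-plus-Lipschitz to an isometry, after one checks the map is well-defined (independence of the choice of $[v_0 v_1]$).

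Once $\Sigma = \Sigma_0 * \Sigma_1$ is known metrically, the structural conclusions follow from the algebra of joins and spherical suspensions: since $C(\Sigma) = C(\Sigma_0) \times C(\Sigma_1)$ and $C(\Sigma)$ is (an open cone that is) homeomorphic to Euclidean space because $\Sigma$ is a topologically nice sphere, the factors $C(\Sigma_i)$ are contractible topological manifolds, hence the $\Sigma_i$ are spheres; and iterated spaces of directions in a join are joins of iterated spaces of directions of the factors, so topological niceness is inherited by $\Sigma_0$ and $\Sigma_1$. I would phrase this last part using Example 4.2 ($S^m * S^n = S^{m+n+1}$) as the model and Remark 1.7's type of argument (as in the proof cited from [32, Theorem D]) to push regularity across the product/join.

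\textbf{Main obstacle.} The step I expect to be hardest is the ``$\leq$'' direction of the join metric identity --- equivalently, showing the canonical $1$-Lipschitz surjection $\Sigma_0 * \Sigma_1 \to \Sigma$ is injective/isometric. This is exactly where conditions (1) and (2) alone are \emph{not} sufficient in general Alexandrov geometry (as the paragraph before the theorem emphasizes), so the topological hypotheses must be invoked in an essential way, presumably through a local stability/degree argument showing the map cannot collapse positive-dimensional sets. Handling the low-dimensional and boundary-degenerate cases of $\Sigma_0$ or $\Sigma_1$ (using the conventions in Remark 4.3) will require separate, if easier, bookkeeping.
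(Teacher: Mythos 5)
Your overall frame---use Lemma 1.2 to get the $\frac\pi2$-incidence between $\Sigma_0$ and $\Sigma_1$, build the canonical map between $\Sigma$ and $\Sigma_0*\Sigma_1$ from condition (1), get one inequality of the join identity from comparison, and isolate the reverse inequality as the hard step---is the right frame, and it matches the paper's. (Minor slip: comparison gives $|vw|\geq d_{\mathrm{join}}$, i.e.\ $\cos|vw|\leq$ the join formula, so the easy and hard directions are the opposite of what you state.) But there is a genuine gap exactly at the step you flag as the main obstacle. ``Surjective $+$ $1$-Lipschitz $+$ equal Hausdorff dimension'' does not imply isometry or even injectivity (e.g.\ $x\mapsto\min(x,1)$ from $[0,2]$ onto $[0,1]$), and ``Perelman stability or a degree argument'' is a placeholder, not an argument. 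The concrete obstruction is that a point of $\Sigma_1$ and a point of $\Sigma_0$ may a priori be joined by $m>1$ minimal geodesics, in which case the natural map is $m$-to-$1$ on the open part and $\Sigma$ would be a quotient of the join rather than the join. The paper's proof is an induction on $\dim\Sigma$ whose entire middle section is devoted to killing this multiplicity: one first shows $\partial\Sigma_1=\emptyset$ (an Alexander-duality argument absent from your sketch), then that $\psi_0:\Uparrow_x^{\Sigma_0}\to\Sigma_0$ and $\psi_1:\Uparrow_p^{\Sigma_1}\to\Sigma_1$ are $m$-fold locally isometric coverings with $m$ independent of the points (Sublemmas 4.11, 4.12), then that $X_i=\{v:\ |v\Sigma_i|\le\frac\pi4\}$ are disc bundles over $\Sigma_i$ (Sublemma 4.13), and finally that gluing the pullback covers $\widetilde X_1,\widetilde X_0$ along their common boundary produces an $m$-fold cover of $\Sigma\cong S^n$, so that simple connectivity (together with separate $\pi_1$ and Alexander-duality computations when $\dim\Sigma_0$ or $\dim\Sigma_1$ equals $1$) forces $m=1$. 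This covering-space mechanism is where the hypotheses ``topologically nice and homeomorphic to a sphere'' are actually consumed; nothing in your proposal substitutes for it.

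A second omission: even with injectivity in hand, the reverse metric inequality is not automatic. The paper obtains it by first proving, via the inductive hypothesis applied to $\Sigma_p\Sigma$, that $\Sigma_p\Sigma=\Sigma_p\Sigma_0*(\Sigma_p\Sigma_0)^{\perp}$ for $p\in\Sigma_0$; hence the $1$-Lipschitz map $\xi:\Sigma\to\Sigma_0*\Sigma_1$ induces an isometry on spaces of directions, and hinge comparison then converts equality of angles into $|p_1p_2|\le|\xi(p_1)\xi(p_2)|$. Your sketch never sets up the induction on dimension, yet both the $m=1$ step and this final step depend on it. So the proposal locates the difficulty correctly but does not contain the ideas that resolve it.
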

Theorem 4.4  was related to and inspired by the following Conjecture
4.5 made by Yamaguchi [42, 14.6].


\begin{conjecture}
Let $A\in \Alex^n(0)$ be noncompact with a soul $S$. If $p\in
(C_i-\partial C_i)$ is a topologically regular point, then $T_pX$ is
isometric to the product $T_pC_i\times K$, where $C_i$ are as in
1.1, and $K$ is a Euclidean cone.
\end{conjecture}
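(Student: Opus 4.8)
The plan is to obtain Conjecture 4.5 as the ``soul'' incarnation of the join--recognition Theorem 4.4. Fix $p\in C_i\setminus\partial C_i$; if $\dim C_i=0$ the assertion is trivial ($C_i=S$ is then a point and the splitting is the trivial one), so assume $\dim C_i\ge 1$. Put $\Sigma=\Sigma_pA\in\Alex^{n-1}(1)$, $\Sigma_0=\Sigma_pC_i$ and $\Sigma_1=\hat\Sigma_pC_i=\{v\in\Sigma_pA\mid|v\Sigma_pC_i|\ge\frac\pi2\}$. Since $C(X*Y)=C(X)\times C(Y)$, the desired metric splitting $T_p=T_pC_i\times\hat T_pC_i$ is \emph{equivalent} to the join identity $\Sigma_pA=\Sigma_pC_i*\hat\Sigma_pC_i$, and it suffices to produce this identity. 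First I would record the elementary structure: each $C_i$ produced in the soul construction is convex in $A$ and $p\notin\partial C_i$, so $\Sigma_0$ is a closed convex subset of $\Sigma$ without boundary, of dimension $\dim C_i-1\ge 0$; and, being in the noncompact case $C_i\neq A$, by picking $q$ with $|pq|=|qC_i|>0$ and invoking the first variation formula one gets $\ua_p^q\in\Sigma_1$, so $\Sigma_1\neq\emptyset$.

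The core of the reduction is to verify hypotheses (1) and (2) of Theorem 4.4 for the pair $(\Sigma_0,\Sigma_1)$. For (1) --- that every $w\in\Sigma$ lies on a minimal geodesic $[v_0v_1]$ with $v_0\in\Sigma_0$, $v_1\in\Sigma_1$ --- choose $v_0\in\Sigma_0$ with $|wv_0|=|w\Sigma_0|$; if $|wv_0|\ge\frac\pi2$ then $w\in\Sigma_1$ and $v_1=w$ works, so assume $|wv_0|<\frac\pi2$. By the first variation formula $\ua_{v_0}^w$ is perpendicular to $\Sigma_{v_0}\Sigma_0$, and Lemma 1.2, applied in the appropriate space of directions with the $0$-dimensional convention of Remark 4.3, gives $|\ua_{v_0}^w\xi|=\frac\pi2$ for every $\xi\in\Sigma_{v_0}\Sigma_0$. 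Then a normal--exponential lemma --- the compact, distance--to--convex--set analogue of Lemma 1.1, valid on $\{\dist_{\Sigma_0}<\frac\pi2\}$ --- shows that the geodesic issuing from $v_0$ in the direction $\ua_{v_0}^w$ keeps distance exactly $t$ from $\Sigma_0$, remains minimal, and at $t=\frac\pi2$ reaches a point $v_1$ with $|v_1\Sigma_0|=\frac\pi2$, i.e. $v_1\in\Sigma_1$; since $w$ lies on this geodesic at parameter $|v_0w|<\frac\pi2$, we obtain $w\in[v_0v_1]$. For (2) --- the dimension equality $\dim\Sigma_0+\dim\Sigma_1+1=n-1$ --- the inequality ``$\ge$'' follows once (1) holds, since the canonical onto $1$-Lipschitz map $\Sigma_0*\Sigma_1\to\Sigma$ cannot raise Hausdorff dimension; the inequality ``$\le$'' follows because, by Lemma 1.2, every $v\in\Sigma_1$ has $|v\Sigma_0|\equiv\frac\pi2$, so $\Sigma_1\subseteq(\Sigma_0)^\perp$, and the estimate $\dim Y+\dim Y^\perp+1\le\dim Z$ for a closed convex boundaryless $Y\subseteq Z\in\Alex(1)$ (proved by induction on $\dim Z$ using first variation and Lemma 1.2, in the spirit of the dimension bounds of Section 4) combines with $\dim\Sigma_0=\dim C_i-1$ to give the claim.

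With (1) and (2) in hand, Theorem 4.4 yields $\Sigma_pA=\Sigma_pC_i*\hat\Sigma_pC_i$, hence $T_p=T_pC_i\times\hat T_pC_i$, which is the assertion. The step I expect to be the genuine obstacle is none of the above verifications but rather the mismatch between what Theorem 4.4 provides and what Conjecture 4.5 demands: Theorem 4.4 requires $\Sigma_pA$ to be \emph{topologically nice} and homeomorphic to a sphere, whereas the conjecture hypothesizes only that $p$ is \emph{topologically regular}, so a priori $\Sigma_pA\cong S^{n-1}$ with no control whatever on its iterated spaces of directions. Closing this gap seems to require either upgrading Theorem 4.4 to merely topologically regular $\Sigma_pA$ --- where the iterated spherical suspension of the Poincar\'e homology $3$-sphere (topologically regular but not topologically nice; see Example 1.4) is exactly the sort of example making the join--recognition problem delicate --- or else a direct construction of the splitting of $T_p$ out of the concavity of the iterated distance functions $\dist_{\partial C_j}$ that avoids a recognition theorem entirely. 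In the regime actually needed for Theorem A, namely $\dim S=n-2$ with $A$ (hence $\Sigma_pA$) topologically nice, this obstruction disappears and the argument above goes through; this is, in essence, the content of Proposition 2.1.
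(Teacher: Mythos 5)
The statement you were asked to prove is Conjecture 4.5, which the paper attributes to Yamaguchi and does \emph{not} prove: the only cases known are $n=4$ (Yamaguchi, where topologically nice and topologically regular coincide) and, implicitly, the codimension-two topologically nice situation covered by Theorem 4.6 and Proposition 2.1. So there is no ``paper proof'' to match yours against, and your proposal, as you yourself concede in its last paragraph, is not a proof of the conjecture either. The reduction you set up is the correct and intended one --- the splitting $T_p=T_pC_i\times\hat T_pC_i$ is equivalent to the join identity $\Sigma_pA=\Sigma_pC_i*\hat\Sigma_pC_i$ since $C(X*Y)=C(X)\times C(Y)$, and your verifications of conditions (1) and (2) track Sublemma 4.10, Lemma 4.7 and the dimension count of Section 4 --- but the decisive hypothesis of Theorem 4.4 is that $\Sigma_pA$ be \emph{topologically nice} and homeomorphic to a sphere, whereas the conjecture grants only that $p$ is topologically regular. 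Example 1.4 (iterated suspensions of the Poincar\'e sphere) shows these hypotheses are genuinely different, and nothing in the paper, nor in your proposal, bridges them. Identifying the obstacle is not the same as overcoming it; the argument you give establishes the conjecture only under the additional topological-niceness assumption, which is exactly the regime of Proposition 2.1.

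Two smaller points. First, your verification of condition (1) via a ``normal-exponential lemma'' --- extending $[v_0w]$ past $w$ while keeping $|\gamma(t)\Sigma_0|=t$ until it hits $\Sigma_1$ at $t=\frac\pi2$ --- tacitly requires $\dist_{\Sigma_0}$ to be noncritical on $\{0<\dist_{\Sigma_0}<\frac\pi2\}$. In Theorem 4.6 this is extracted from the explicit hypothesis that $|u\Sigma_0|\le\frac\pi2$ \emph{and} $|u\Sigma_1|\le\frac\pi2$ for every $u\in\Sigma$; in the soul setting the second inequality comes from the first variation formula for the concave exhaustion functions (the $\frac\pi2$-density of $\Uparrow_p^{\partial\Omega_c}$ in $\Sigma_pA$), so you should state and use it rather than leave it implicit. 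Second, even granting everything, your dimension inequality ``$\le$'' quietly assumes $\dim\Sigma_1=\dim(\Sigma_0)^\perp$, i.e.\ that $\Sigma_1$ attains the maximal possible dimension; Lemma 4.7 only gives $\dim\Sigma_0+\dim\Sigma_1+1\le n-1$, which is the direction you actually need, so this can be repaired, but condition (2) as stated is an equality and the reverse inequality is the one that requires condition (1) plus the surjectivity of the canonical $1$-Lipschitz comparison map.
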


Yamaguchi proved Conjecture 4.5 for $n=4$ ([42, Theorem 14.5]);
where topologically nice and topologically regular are equivalent.

Using Theorem 4.4, with additional argument we can get the following
classification result, 
which can imply Proposition 2.1.

\begin{theorem}
Let $\Sigma\in \Alex^n(1),n\geq 2$. Let $\Sigma_0\subset \Sigma$ be
a convex closed subset without boundary with dimension $n-2$, and
let $\Sigma_1=\{u\in \Sigma\
|\hspace{2mm}|u\Sigma_0|\geq\frac{\pi}{2}\}$. Suppose that
$\Sigma_1\neq \emptyset$, $\Sigma$ is homeomorphic to a sphere and
topologically nice, and $\Sigma$ satisfies that for any $u\in
\Sigma,\ |u\Sigma_0|\leq \frac{\pi}{2}$ and
$|u\Sigma_1|\leq\frac{\pi}{2}$. Then $\Sigma_1$ is convex,
$\Sigma_0$ is homeomorphic to $S^{n-2}$ and topologically nice, and
$[\Sigma_0\Sigma_1]= \Sigma_0*\Sigma_1$, where $[\Sigma_0\Sigma_1]$
is with the restricted metric. Explicitly, we get the following
classification:

1) If $\Sigma_1=S^1(r)$ with $r\leq 1$, then
$\Sigma=\Sigma_0*\Sigma_1$.

2) If $\Sigma_1=\{v_1,v_2\}$ with $ |v_1v_2|=\pi$, then
$\Sigma=\Sigma_1*\Sigma_1^{\perp}$.

3) If $\Sigma_1=\{v\}$, then $\Sigma=D(\Sigma_1*\hat\Sigma_1)$; the
double of half suspension of $\hat \Sigma_1=\{u\in
\Sigma\,\mid\,|u\Sigma_1|\geq\frac{\pi}{2}\}$.

4) If $\Sigma_1=[ab]$, then $\Sigma=(a*\hat
\Sigma_1\cup_{\hat\Sigma_1}
b*\hat\Sigma_1)\cup_\partial(\Sigma_1*\Sigma_0)$ with the gluing
metric ([2, Definition 3.1.12]).
\end{theorem}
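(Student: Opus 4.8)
The plan is to first establish the join decomposition $[\Sigma_0\Sigma_1] = \Sigma_0*\Sigma_1$ and the topological properties of $\Sigma_0$ by invoking Theorem 4.4; this requires verifying hypotheses (1) and (2). Hypothesis (2) is immediate since $\dim\Sigma_0 = n-2$ forces $\dim\Sigma_1 \leq 1$, and one checks $\dim\Sigma_1 = 1$ in case 1, $=0$ otherwise, so that $\dim\Sigma_0 + \dim\Sigma_1 + 1 = n$ holds once $\Sigma_1$ has the right dimension; the extra hypothesis that $|u\Sigma_0| \leq \frac\pi2$ and $|u\Sigma_1|\leq\frac\pi2$ for all $u$ is exactly what forces every $u$ to lie on a minimal geodesic $[v_0 v_1]$ with $v_i \in \Sigma_i$ (one takes a closest point $v_0 \in \Sigma_0$, extends past $u$, and uses that the endpoint has distance $\geq\frac\pi2$ hence $=\frac\pi2$ from $\Sigma_0$ so lies in $\Sigma_1$), giving hypothesis (1). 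Then Theorem 4.4 yields $[\Sigma_0\Sigma_1] = \Sigma_0 * \Sigma_1$ with $\Sigma_0$ topologically nice and homeomorphic to $S^{n-2}$, and also tells us $\Sigma_1$ is topologically nice and homeomorphic to a sphere of dimension $n - 2 - (n-2) = $ wait — of dimension $\dim\Sigma - \dim\Sigma_0 - 1 = n - (n-2) - 1 = 1$; but $\Sigma_1$ is only forced to be a sphere when $\dim\Sigma_1 = 1$, i.e.\ in case 1, while in the low-dimensional cases the classification of $1$-dimensional (or $0$-dimensional) Alexandrov spaces with curvature $\geq 1$ gives the listed possibilities $S^1(r)$ with $r\leq 1$, $[ab]$, $\{v\}$, or $\{v_1,v_2\}$ with $|v_1v_2| = \pi$. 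The convexity of $\Sigma_1$ follows from the join structure (a minimal geodesic between two points of $\Sigma_1$ that left $\Sigma_1$ would contradict the join metric), using Remark 4.3(2) for the $0$-dimensional cases.

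Next I would handle the four cases by determining not just $[\Sigma_0\Sigma_1]$ but all of $\Sigma$. The point is that $\Sigma \setminus [\Sigma_0\Sigma_1]$ consists of directions $u$ with $|u\Sigma_0| < \frac\pi2$ or $|u\Sigma_1| < \frac\pi2$, and one must see how these attach. In case 1 ($\Sigma_1 = S^1(r)$), the hypothesis already gives $|u\Sigma_1| \leq \frac\pi2$ for all $u$, and combined with the first variation / Lemma 1.2 type argument one shows every $u$ actually lies in $[\Sigma_0\Sigma_1]$, so $\Sigma = \Sigma_0*\Sigma_1$ outright. In case 2 ($\Sigma_1 = \{v_1,v_2\}$, $|v_1 v_2| = \pi$), the two points $v_1, v_2$ are antipodal, so $\Sigma_1 * \Sigma_1^\perp$ is a spherical suspension and one identifies $\Sigma_1^\perp = \{u : |u v_1| = |u v_2| = \frac\pi2\}$ as the "equator"; here $[\Sigma_0\Sigma_1] = \Sigma_0 * \{v_1,v_2\}$ is already the full suspension of $\Sigma_0$, and one needs $\Sigma_0 = \Sigma_1^\perp$, which follows since $\Sigma_0 \subseteq \Sigma_1^\perp$ (every point of $\Sigma_0$ has distance $\frac\pi2$ to each $v_i$ by Lemma 1.2, as $\{v_1,v_2\}$ is the "boundary-free $0$-dimensional space" of Remark 4.3(1)) and dimension count plus the join structure forces equality. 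Cases 3 and 4 are the genuinely delicate ones: here $\partial\Sigma_1 \neq\emptyset$ (the point $v$, resp.\ the endpoints $a,b$), so $[\Sigma_0\Sigma_1]$ has boundary and does not fill $\Sigma$; the complement is the region where $|u\Sigma_1| < \frac\pi2$, which one analyzes via $\hat\Sigma_1 = \{u : |u\Sigma_1| \geq \frac\pi2\}$. One shows $\hat\Sigma_1$ is convex of dimension $n-1$, $\Sigma_0 \subset \hat\Sigma_1$, and that the half-suspensions $v*\hat\Sigma_1$ (case 3, one for each side, glued along $\hat\Sigma_1$ to form a double) resp.\ $a*\hat\Sigma_1 \cup_{\hat\Sigma_1} b*\hat\Sigma_1$ (case 4) exhaust the region "near $\partial\Sigma_1$", and that these glue to $\Sigma_1 * \Sigma_0$ along the common boundary $\hat\Sigma_1 \supset \Sigma_0$ — more precisely along $\partial(\Sigma_1*\Sigma_0)$, which in case 4 is $(\partial\Sigma_1)*\Sigma_0 \cup \Sigma_1*\partial\Sigma_0 = \{a,b\}*\Sigma_0$ since $\partial\Sigma_0 = \emptyset$. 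The gluing-metric formalism of Petrunin (cited as [2, Def.\ 3.1.12]) is used to verify the glued space is the given Alexandrov space and matches $\Sigma$ isometrically.

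Finally I would verify the result implies Proposition 2.1: (2.1.1) and the topological-niceness of $\Sigma_0$ are direct, (2.1.3) is the join statement $[\Sigma_0^p\Sigma_1^p] = \Sigma_0^p * \Sigma_1^p$, (2.1.2) is the convexity and classification of $\Sigma_1$, and (2.1.4) about the subset $E$ with $B(E,\frac\pi2) = \Sigma_pA$ forcing $a,b\in E$, and the middle-point statement, is read off from the explicit description in case 4 — a point $x$ with $|ox| \leq \frac\pi2$ cannot lie in the "caps" $a*\hat\Sigma_1$ or $b*\hat\Sigma_1$ away from $\hat\Sigma_1$ since those are at distance $> \frac\pi2$ from $o$, hence $x \in \Sigma_1*\Sigma_0 = [\Sigma_0\Sigma_1]$.

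I expect the main obstacle to be cases 3 and 4 — specifically, proving that the region $\{u : |u\Sigma_1| < \frac\pi2\}$ is \emph{exactly} the interior of the union of half-suspensions $v*\hat\Sigma_1$ (or $a*\hat\Sigma_1 \cup b*\hat\Sigma_1$), i.e.\ controlling the metric structure near $\partial\Sigma_1$ and showing $\hat\Sigma_1$ itself splits as a join involving $\Sigma_0$ so that the two gluing descriptions are compatible. This is where one must bootstrap Theorem 4.4 (or an inductive version of it) applied to $\hat\Sigma_1$ in place of $\Sigma$, keeping track that topological niceness is inherited by $\hat\Sigma_1$ and by all the iterated spaces of directions, and carefully matching the gluing loci. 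The spherical suspension cases 1 and 2 should be comparatively routine given Theorem 4.4.
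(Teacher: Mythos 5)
There is a genuine gap, and it occurs at the very first step. You propose to verify hypotheses (1) and (2) of Theorem 4.4 and then invoke that theorem to get $[\Sigma_0\Sigma_1]=\Sigma_0*\Sigma_1$ in all four cases. But the conclusion of Theorem 4.4 is that the \emph{whole} space $\Sigma$ is the join $\Sigma_0*\Sigma_1$, which is false in cases 2, 3 and 4; correspondingly its hypotheses fail there. Hypothesis (2) fails in cases 2 and 3 ($\dim\Sigma_1=0$, so $\dim\Sigma_0+\dim\Sigma_1+1=n-1\neq n$), and hypothesis (1) fails in cases 3 and 4: an interior point $u$ of $\hat\Sigma_1$ not on $\Sigma_0$ satisfies $|u\Sigma_1|=\frac\pi2$, hence cannot lie strictly inside any $[v_0v_1]$ with $v_i\in\Sigma_i$ and $|v_0v_1|=\frac\pi2$. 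Your argument for (1) — take a closest point $v_0\in\Sigma_0$, ``extend past $u$'' until the endpoint reaches distance $\frac\pi2$ from $\Sigma_0$ — is not valid in an Alexandrov space: geodesics need not extend, and even where they do, $\dist_{\Sigma_0}$ need not reach $\frac\pi2$ along the extension. The paper establishes condition (1) only in case 1, via Lemma 4.15, whose proof crucially uses that $\Uparrow_w^{\Sigma_1}=(\Sigma_w\Sigma_0)^{\perp}=S^1$ fills the entire perpendicular set — exactly what fails when $\Sigma_1$ is a point, a pair of points, or an arc. In case 4 the identity $[\Sigma_0\Sigma_1]=\Sigma_0*\Sigma_1$ is proved by a separate argument (uniqueness of $[xz]$ for $x\in\Sigma_1$, $z\in\Sigma_0$, followed by the explicit construction of the comparison map from the end of the proof of Theorem 4.4), not by citing Theorem 4.4 itself.

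Beyond this, the part you yourself identify as ``the main obstacle'' — cases 3 and 4 — is where essentially all of the work of the theorem lies, and your proposal describes the desired conclusion rather than an argument for it. The paper's proof there is an induction on $n$ (Sublemma 4.20 verifies the hypotheses for $\Sigma_p\Sigma$, $p\in\Sigma_0$), a trichotomy on $\dim\hat\Sigma_1\in\{n,n-1,n-2\}$ in which two branches are eliminated by Lemma 4.7 and by Alexander duality/deformation-retraction arguments, a count of the number of minimal geodesics from points of $\hat\Sigma_1^{\circ}$ to $v$ (resp.\ to $a$, $b$) using the locally isometric covering $\psi_0$ and Lemma 4.16, and the identification $\Sigma_0=\partial\hat\Sigma_1$ via Lemma 4.18. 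None of these steps appears in the proposal, so the classification in cases 3 and 4 — and hence statement (2.1.4), which you read off from it — is not established.
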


In the rest of this section, we will prove Theorem 4.4 and Theorem
4.6. First we will present some preparations.
\begin{lemma}
\label{lem:dim} Let $\Sigma\in \Alex^n(1)$ and let
$\Sigma_1,\Sigma_0\subset\Sigma$ be two convex closed subsets with
dimension $k,l$ respectively. Suppose that $\Sigma$ satisfies: for
any $v_0,v_1\in\Sigma_0,\Sigma_1$ respectively,
$|v_0v_1|=\frac{\pi}{2}$. Then $n\geq l+k+1$.
\end{lemma}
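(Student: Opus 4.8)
The plan is to prove the dimension inequality $n \geq l+k+1$ by exhibiting, inside $\Sigma$, a subset isometric to a spherical join $\Sigma_0 * \Sigma_1$ (or at least something of dimension $l+k+1$) and invoking monotonicity of Hausdorff dimension under the inclusion into $\Sigma$. The hypothesis that $|v_0 v_1| = \frac{\pi}{2}$ for all $v_0 \in \Sigma_0$, $v_1 \in \Sigma_1$ is exactly the orthogonality condition that makes the join structure available. Concretely, for $v_0 \in \Sigma_0$ and $v_1 \in \Sigma_1$ with $|v_0 v_1| = \frac{\pi}{2}$, and for each $t \in [0,\frac{\pi}{2}]$, I would pick a minimal geodesic $[v_0 v_1]$ and let $v_0 *_t v_1$ denote its point at distance $t$ from $v_0$; then I would show the map $\Sigma_0 \times \Sigma_1 \times [0,\tfrac{\pi}{2}] \to \Sigma$ sending $(v_0,v_1,t)$ to such a point descends to a distance-preserving map on the join $\Sigma_0 * \Sigma_1$ onto its image. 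The key geometric computation is the first-variation / spherical-cosine identity: in $\mathrm{Alex}^n(1)$, if $|v_0 v_1| = \frac{\pi}{2}$ then for points $w_i = v_0 *_{t_i} v_1$ one has $\cos|w_1 w_2| = \cos t_1 \cos t_2 + \sin t_1 \sin t_2 \cos(\text{angle in the }\Sigma_1\text{ factor})$, matching Definition 4.1.

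The steps, in order, would be: (1) First check the degenerate low-dimensional cases — if $k=0$ or $l=0$ one must use the convention in Remark 4.3(1) that a $0$-dimensional space is a two-point set at distance $\pi$; here the claim reduces to showing $\Sigma$ contains a configuration forcing $\dim \Sigma \geq l+1$ (resp. $k+1$), which follows because a point at distance $\frac{\pi}{2}$ from all of $\Sigma_0$ spans a suspension-type set with $\Sigma_0$. (2) For the general case, fix $v_0 \in \Sigma_0$, $v_1 \in \Sigma_1$, and build geodesics from $v_0$ to every point of $\Sigma_1$; using the triangle comparison in $\mathrm{Alex}^n(1)$ together with $|v_0 w| = \frac{\pi}{2}$ for all $w \in \Sigma_1$, show these geodesics fan out without branching, so that $B(v_0,\tfrac{\pi}{2})$ contains an isometric copy of the (metric) cone-like join $\{v_0\} * \Sigma_1$, giving a subset of dimension $k+1$. (3) Now thicken by $\Sigma_0$: using that $|v_0' v_1'| = \frac{\pi}{2}$ for every $v_0' \in \Sigma_0$ and $v_1' \in \Sigma_1$, run the analogous fan-out argument from all of $\Sigma_0$ simultaneously and verify, via the cosine rule above and Toponogov comparison, that the resulting map from $\Sigma_0 * \Sigma_1$ is distance-nonincreasing and distance-nondecreasing, hence isometric onto its image. (4) Conclude $n = \dim \Sigma \geq \dim(\Sigma_0 * \Sigma_1) = l + k + 1$, using $\dim(X*Y) = \dim X + \dim Y + 1$ for spherical joins.

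The main obstacle I expect is step (3): showing that the geodesics realizing the join actually exist in $\Sigma$ and do not branch or fail to be minimal, and that the induced map on $\Sigma_0 * \Sigma_1$ is globally isometric rather than merely locally so. The orthogonality hypothesis $|v_0 v_1| = \frac{\pi}{2}$ is what controls this — it pins the comparison angles to exactly $\frac{\pi}{2}$ so that the rigidity case of Toponogov's theorem applies — but one has to be careful that $\Sigma_0$ and $\Sigma_1$ being merely convex closed (not necessarily without boundary, per the statement of this lemma) does not obstruct the construction; in fact for the dimension bound alone one does not need the full isometric splitting, only a lower bound on the Hausdorff dimension of the image, so a cleaner route may be to avoid proving the join structure outright and instead construct, for generic $v_0 \in \Sigma_0$, an isometric embedding of $\Sigma_1 * S^{l-1}$ or directly estimate $\dim$ via a bi-Lipschitz chart; I would pursue whichever is shorter, but the rigidity-of-comparison-angle computation is unavoidable either way.
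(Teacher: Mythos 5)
Your overall strategy---compare a suitable subset of $\Sigma$ with the spherical join $\Sigma_0*\Sigma_1$ and invoke monotonicity of Hausdorff dimension under Lipschitz maps---is the same skeleton as the paper's proof. But your primary route contains a step that would fail: you propose to build a map $\Sigma_0*\Sigma_1\to\Sigma$ and show it is \emph{both} distance-nondecreasing and distance-nonincreasing, ``hence isometric onto its image.'' The distance-nonincreasing half, i.e.\ $\cos|w_1w_2|\ge\cos t_1\cos t_2\cos|x_1x_2|+\sin t_1\sin t_2\cos|y_1y_2|$ for $w_i$ on geodesics $[x_iy_i]$, does \emph{not} follow from the hypothesis that all mutual distances equal $\tfrac{\pi}{2}$; it is precisely the join rigidity that Theorems 4.4 and 4.6 establish only under the additional conditions (1), (2) and topological niceness. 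Under the hypotheses of this lemma there may be several minimal geodesics between a given $v_0\in\Sigma_0$ and $v_1\in\Sigma_1$ (the $m$-fold covering phenomenon in the proof of Theorem 4.4), so $[\Sigma_0\Sigma_1]$ need not be isometric to the join at all, and your map is not even canonically defined. Your closing hedge (``only a lower bound on the dimension of the image is needed'') is the right instinct, but the alternatives you name (a bi-Lipschitz chart, an isometric embedding of $\Sigma_1*S^{l-1}$) are still rigidity statements of the same unavailable strength.

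The fix is to use only the one-sided comparison inequality and reverse the direction of the map, which is what the paper does: take closed convex subsets $C_i\subset\Sigma_i\setminus\partial\Sigma_i$ of full dimension (to avoid boundary issues when applying Lemma 1.2), show that distinct geodesics $[x_1y_1]$, $[x_2y_2]$ with $x_i\in C_1$, $y_i\in C_0$ meet only at endpoints (so every $p\in[C_1C_0]$ is uniquely a triple $([xy],s)$), and define the \emph{surjective} map $g\colon[C_1C_0]\to C_1*C_0$ sending $([xy],s)$ to the corresponding join point. Triangle comparison, together with the rigidity of the right-angled triangles supplied by Lemma 2.6 (which pins down $|x_1w_2|$ and $|y_1w_2|$ exactly via the spherical cosine law), gives $|w_1w_2|\ge|g(w_1)g(w_2)|$, so $g$ is nonexpanding and onto; since Lipschitz maps do not increase Hausdorff dimension, $n\ge\dim[C_1C_0]\ge\dim(C_1*C_0)=k+l+1$. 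Equivalently, if you insist on mapping out of the join, prove only that your map is distance-nondecreasing (which comparison does give); its inverse on the image is then $1$-Lipschitz onto the join and the same dimension count applies. Either way, the two-sided isometry claim must be dropped.
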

\begin{proof}
Let $C_i\subset \Sigma_i\backslash \partial\Sigma_i,i=1,0$, be two
closed convex subsets. Note that $[C_1C_0]$ with the restricted
metric is a closed subset of $\Sigma$. It suffices to construct a
nonexpanding map from $[C_1C_0]$ to $ C_1*C_0$. If so,
$\dim(\Sigma)\geq \dim([C_1C_0])\geq \dim(C_1*C_0)= k+l+1$, where
the last identity is because, by definition, we have that
$\dim(C(C_0*C_1))=\dim(C(C_0)\times C(C_1))= k+l+2$.

First we claim that  $[x_1y_1]\cap [x_2y_2]\subset\{x_1,y_1,x_2,y_2\}$, 
for $x_1,x_2\in C_1,y_1,y_2\in C_0$. Thus for $p\in [C_1C_0]$, $p$
can be uniquely written as $([xy],s)$, where $x\in C_1,\;y\in
C_0,\;p\in[xy]$ and $s=|px|$. Then we can construct a map,
$g:[C_1C_0]\to C_1*C_0$, by $g|_{C_i}=id,i=0,1$ and
$g(([pq],s))=([pq],s)$ (the unique point on the unique minimal
geodesic $[pq]$ with distance $s$ from $p$), for $p\in C_1$, $q\in
C_0$.

Next we will show that $g$ is nonexpanding. In order to do so we
need the following fact:

For $p\in C_1$ and $q\in C_0$, $|pq|=|p\Sigma_0|$. By Lemma 1.2,
$|\ua_q^pv|=\frac\pi2$, for any $v\in\Sigma_q\Sigma_0$. Hence
$\tilde\angle(p,q,r)=\angle(p,q,r)=\frac\pi2$, for any
$r\in\Sigma_0$. By Lemma 2.6, there is a convex triangle isometric
to the corresponding one on space form.

For $x,y\in [C_1C_0]$, by the property of $[C_1C_0]$, we have that
$x\in [p_1p_0],y\in[q_1q_0]$. Let $|xp_1|=s$ and $|yq_1|=t$. Then by
the above fact, $\cos(|p_1y|)=\cos(|p_1q_1|)\cos t,\
\cos(|p_0y|)=\cos(|p_0q_0|)\sin t$. By the monotonicity of angle,
$|xy|\geq \cos s\cos t\cos (|p_1q_1|)+\sin s\sin t\cos
(|p_0q_0|)=|g(x)g(y)|$, which shows that $g$ is  nonexpanding.

Finally, we will verify the claim. It suffices to show that if
$x_1\neq x_2,y_1\neq y_2$, then $[x_1y_1]\cap [x_2y_2]=\emptyset$.
Suppose that $[x_1y_1]\cap [x_2y_2]=z$. Hence by the above fact,
$[zy_1]\subset $ the totally geodesic triangle bounded by
$\{x_2,y_1,y_2\}$, which is isometric to the corresponding triangle
on $S^2(1)$, or the geodesic will branch. Thus $\angle
(x_1,y_1,y_2)<\frac\pi2$. This is a contradiction.
\end{proof}

\begin{remark}
Let $\Sigma\in \Alex^n(1)$, and let $\Sigma_0\subset \Sigma$ be a
convex closed subset. Suppose $\Sigma_1=\{v\in \Sigma\
|\hspace{2mm}|v\Sigma_0|\geq\frac{\pi}{2}\}\neq \emptyset$. If
$\partial \Sigma_0=\emptyset$ or $\partial\Sigma_1=\emptyset$, then
by Lemma 1.2, $\Sigma$ satisfies the conditions of Lemma 4.7. These
are the cases usually used in the following context.
\end{remark}
\subsection{Proof of Theorem 4.4}
\ \\

Sketch of the proof: we will prove Theorem 4.4 by induction on the
dimension of $\Sigma$. And we will prove the inductive step
according to the different situations of the dimension of $\Sigma_0$
and $\Sigma_1$. Except two simple cases, we will first show that for
each point in $\Sigma_1$ and each point in $\Sigma_0$, there exist
exactly $m$ minimal geodesics joining them for some $m>0$. Then
argue by contradiction, we can get that $m=1$, using this we can
derive that $\Sigma$ is a join.

\begin{proof}[Proof of Theorem 4.4]
Denote $\dim(\Sigma_1)$ by $I$ and $\dim(\Sigma_0)$ by $J$. First we
will prove the following property.
\begin{sub lemma}
We have that $\Sigma_1$ is convex.
\end{sub lemma}
\begin{proof}
For $p,q\in \Sigma_1$, when $|pq|<\pi$, by triangle comparison we
have that for any $ x\in\Sigma_0,\ |x,[pq]|\geq\frac{\pi}{2}$. Hence
$[pq]\subset \Sigma_1$. When $|pq|=\pi$, if there is no other points
in $\Sigma_1$, then the sublemma holds. If there is
$x\in\Sigma_1-\{p,q\}$, one can easily
see that $|px|+|xq|=\pi=|pq|$, 
thus $[px],\ [xq]\subset \Sigma_1$, and  $[px]\cup[xq]$ is a minimal
geodesic which lies in $\Sigma_1$. The sublemma thus follows.
\end{proof}
We then proceed the proof by induction on $n$. For $n=1$, i.e.,
$I=0$ and $J=0$, clearly the theorem holds. Suppose $n-1$ the
theorem holds.

Now we will prove the inductive step according to different situations of $I$ and $J$:\\

If $J=0$, i.e., $\Sigma_0=\{v_1,v_2\}$ with $|v_1v_2|=\pi$, then
$\Sigma=\Sigma_0*\Sigma_1$. Thus $\Sigma_{v_1}=\Sigma_1$. Because
$\Sigma$ is topologically nice, $\Sigma_1$ is homeomorphic to
$S^{n-1}$ and topologically nice.\\

Hence in the following we can assume that $J>0$.

Next we want to show that $\partial\Sigma_1=\emptyset$. In order to
apply inductive assumptions to $\Sigma_p\Sigma$, for $p\in\Sigma_0$,
it suffices to check that
$\Sigma_p\Sigma_0,(\Sigma_p\Sigma_0)^{\perp}$ satisfy conditions (1)
and (2). For condition (1):
\begin{sub lemma}
For any $p\in \Sigma_0$ and for any $\bar v\in\Sigma_p\Sigma,\bar
v\in[\bar v_1\bar v_0]\subset[\Ua_p^{\Sigma_1}\Sigma_p\Sigma_0]$.
\end{sub lemma}

Sublemma 4.10 implies that for $p\in
\Sigma_0,(\Sigma_p\Sigma_0)^{\perp}=\Uparrow _p^{\Sigma_1}$ with the
restricted metric in $\Sigma_p$.

\begin{proof}[Proof of Sublemma 4.10]
Observe that by Lemma 1.2, we have that for every $p\in\Sigma_1$ and
every $q\in\Sigma_0$, $|pq|=\frac\pi2$. Hence for every
$r\in\Sigma_0,\angle(p,q,r)=\tilde\angle(p,q,r)$.
By Lemma 2.6, we have that there exists a totally geodesic triangle
determined by $[pq]$ and $[qr]$, which is isometric to a geodesic
triangle in $S^2(1)$.

Let $\bar v\in(\Sigma_p\Sigma)'$. Then $\bar v=\ua_p^x$ for some
$x\in\Sigma$. By perturbing $x$ along $[px]$, we can suppose that
$[px]$ is unique. From condition (1), $x\in[x_1x_0]$, for some
$x_i\in\Sigma_i$. Therefore there exists a totally geodesic triangle
$T$ determined by $[x_0p],[x_1p]$, which is isometric to a geodesic
triangle in $S^2(1)$., and by the uniqueness, we get that
$[px]\subset T$. Thus $\bar v\in[\ua_p^{x_1}\ua_p^{x_0}]$. Since
$(\Sigma_p\Sigma)'$ is dense in $\Sigma_p\Sigma$, we get the
Sublemma.
\end{proof}

For condition (2): from the construction, we see that, $\psi_1:\
\Uparrow_p^{\Sigma_1}\to\Sigma_1,\psi_1(\Uparrow_p^x)=x,x\in\Sigma_1$,
is a submetry (since $\psi_1$ has the horizontal lifting property,
as can be seen in the proof of Sublemma 4.10).
Thus the map $\psi_1$ is $1$-Lipschitz. Therefore
$\dim(\Uparrow_p^{\Sigma_1})\geq \dim(\Sigma_1)$. Hence on one hand,
we have
\begin{center}
$\dim(\Sigma_p\Sigma)=\dim(\Sigma_0)-1+\dim (\Sigma_1)+1 \leq
\dim(\Sigma_p\Sigma_0)+\dim(\Uparrow_p^{\Sigma_1})+1.$
\end{center}
On the other hand, by Lemma 4.7, we have the opposite inequality.
Thus
\begin{center}
$\dim(\Sigma_p\Sigma)=\dim(\Sigma_p\Sigma_0)+\dim(\Uparrow_p^{\Sigma_1})+1$.\\
\end{center}

By now, we obtain that $\Sigma_p\Sigma,p\in\Sigma_0$ satisfies the
inductive assumptions. Hence for every $p\in\Sigma_0,
\Sigma_p\Sigma=\Sigma_p\Sigma_0*(\Sigma_p\Sigma_0)^{\perp}$, and
$\Sigma_p\Sigma_0$ is homeomorphic to a sphere. Thus $\Sigma_0$ is a
topological manifold.


For the following use, we recall Alexander duality \cite{[Ha]}:
Let $K\subset S^n$ be a compact, locally contractible, nonempty,
proper subspace. Then $\widetilde H_i(S^n-K;\mathbb{Z})=\widetilde
H^{n-i-1}(K;\mathbb{Z})$, for all $i$, where $\widetilde
H_i,\widetilde H^i$ denote the reduced homology and cohomology.

Now we are ready to show that $\partial\Sigma_1=\emptyset$.
Argue by contradiction, assume $\partial \Sigma_1\neq\emptyset$.
Then $\Sigma_1$ is contractible, since $\Sigma_1$ has positive
curvature (cf. \cite{[Per1]}). From condition (1), we see that
$\dist_{\Sigma_1}$ has no critical points in
$\Sigma-\Sigma_1\cup\Sigma_0$. Thus we have that there is a
deformation retraction from $\Sigma-\Sigma_1$ to $\Sigma_0$.
Therefore $\widetilde H^{n-1-i}(\Sigma_1)=0$. On one hand by
Alexander duality $\widetilde H_i(S^n-\Sigma_1)=\widetilde
H_i(\Sigma_0)=\widetilde H^{n-1-i}(\Sigma_1)=0$, for any $i\geq 0$.
On the other hand, since $\Sigma_0$ is a topological manifold, it
follows that if $\Sigma_0$ is orientable, $\widetilde
H_n(\Sigma_0)\neq 0$, and if $\Sigma_0$ is not orientable,
$\widetilde H_{n-1}(\Sigma_0)\neq 0$. This is a contradiction.
Therefore $\partial\Sigma_1=\emptyset$.


Now we come to the second case:\\

If $I=0$, we have that $\Sigma_1=\{w_1,w_2\}$ with $|w_1w_2|=\pi$.
Then $\Sigma=\Sigma_0*\Sigma_1$. Because $\Sigma$ is topologically
nice, $\Sigma_0$ is homeomorphic to $S^{n-1}$ and
topologically nice.\\

Hence in the following we can suppose that $I>0$. First we will show
that for any $x_i\in\Sigma_i,i=0,1$, there are $m$ minimal geodesics
joining $x_0$ and $x_1$, for some $m>0$.

Similarly, we have that for every $ x\in \Sigma_1$,
\begin{center}
$\dim(\Sigma_x\Sigma)=\dim(\Sigma_x\Sigma_1)+\dim(\Uparrow_x^{\Sigma_0})+1$.
\end{center}
Hence $\dim(\Uparrow_x^{\Sigma_0})=\dim(\Sigma_0)$.

Since $\partial \Sigma_1=\emptyset $, we can apply Sublemma 4.10 to
$\Sigma_x\Sigma$, $x\in \Sigma _1$. We get that
$\Sigma_x\Sigma_1,(\Sigma_x\Sigma_1)^\perp$ satisfy condition (1)
and $\Uparrow_x^{\Sigma_0}=(\Sigma_x\Sigma_1)^\perp$. Hence
$\Uparrow_x^{\Sigma_0}$ is convex, and similarly,
$\psi_0:\Uparrow_x^{\Sigma_0}\to \Sigma_0,\psi_0(\Uparrow_x^p)=p,
p\in\Sigma_0$, is a submetry. Thus for a $(J,\delta)$-burst point
(see [3, 5.2]) $y\in \Sigma_0$, there is a neighborhood $U_y$ of
$y$, such that $\psi_0^{-1}(U_y)\overset{homeo}{\simeq} U_y\times
F_0$, where $F_0$ is a $0-\dim\ MCS$-space (\cite{[Per1]}). Hence
$F_0$ is a collection of discrete points. Since
$\Uparrow_x^{\Sigma_0}$ is compact, $\#\{\psi_0^{-1}(y)\}<\infty$,
say $m$. In the following, we will show that $m$ is independent of
the choice of $y$.
\begin{sub lemma}
For every $ x\in \Sigma_1$ and every $p,q\in \Sigma_0,\
\#\{\Ua_x^p\}=\#\{\Ua_x^q\}$.
\end{sub lemma}
\begin{proof}
Since for a fixed minimal geodesic $[pq]$ and for any $[xp]$, there
is a totally geodesic triangle with two sides $[pq],[xp]$, which is
isometric to a geodesic triangle in $S^2(1)$, there is a
corresponding $[xq]$. This determines a map from $\Ua_x^p$ to
$\Ua_x^q$, which is 1-1, onto, or there will be a contradiction to
the join of $\Sigma_p$ or $\Sigma_q$, and thus the sublemma follows.
\end{proof}

Then we want to prove the following:
\begin{sub lemma}
The map $\psi_0$ is an $m$-fold locally isometric covering map.
\end{sub lemma}
\begin{proof}
Let $\psi_0^ {-1 }(p)=\cup _{i=1,\cdots,m}\{p_i\}$ and
$\varepsilon=\min\limits_{1\leq i,j\leq m}\{|p_ip_j|\}$. Then
$B(p_i,\frac{\varepsilon}{5})\cap
B(p_j,\frac{\varepsilon}{5})=\emptyset$. Since if there is $x\in
B(p_i,\frac{\varepsilon}{5})\cap B(p_j,\frac{\varepsilon}{5})$, then
$|p_ip_j|\leq |xp_i|+|xp_j|\leq \frac{\varepsilon}{2}$, a
contradiction. Hence $\psi_0^{-1}(B(p,\frac{\varepsilon}{5}))=
B(\psi_0^{-1}(p),\frac{\varepsilon}{5})=B(\cup
\{p_i\},\frac{\varepsilon}{5})=\cup B(p_i,\frac{\varepsilon}{5})$.

Next we will show that $B(p,\frac{\varepsilon}{5})$ is isometric to
$ B(p_i,\frac{\varepsilon}{5})$, for any $1\leq i\leq m$. For any
$\bar x\in B(p,\frac{\varepsilon}{5})$, there exists a unique $x\in
B(p_i,\frac{\varepsilon}{5})$ such that $|p_ix|=| p\bar x|$, or will
contradict to Sublemma 4.11. For $\bar y\in
B(p,\frac{\varepsilon}{5})$, let $y\in B(p_i,\frac{\varepsilon}{5})$
such that $|yx|=|\bar x\bar y|$. Then $|p_iy|=|p\bar y|$, or
$|p_ip_j|\leq \frac{4\varepsilon}{5}$, a contradiction. Thus we get
the sublemma.
\end{proof}

Since we have showed that for any $x\in\Sigma_1$, $\Sigma_x\Sigma$
satisfies the conditions of the theorem, 
by induction assumption, $\Sigma_x\Sigma=\Sigma_x\Sigma_1*
\Uparrow_x^{\Sigma_0}$, and $\Sigma_x\Sigma_1,
\Uparrow_x^{\Sigma_0}$ are homeomorphic to spheres.

Hence $\Sigma_0\overset {homeo}{\simeq}S^J/\Gamma_0$.
Similarly, we get that for any $p\in \Sigma
_0,\psi_1:\Uparrow_p^{\Sigma_1}\to \Sigma_1 $ is an $m$-fold
covering map, and therefore $\Sigma_1\overset
{homeo}{\simeq}S^I/\Gamma_1$. By now we can see that for any
$x_i\in\Sigma_i$, there are $m$
minimal geodesics joining $x_0$ and $x_1$.\\

Next, we want to show that $m=1$. 

\begin{sub lemma}
Let $X_i=\{v\hspace{2 mm}|\hspace{2 mm}|v\Sigma_i|\leq
\frac{\pi}{4}\}$, $i=0,1$. Then $X_i$ are locally trivial bundles
over $\Sigma_i$.
\end{sub lemma}
\begin{proof}
Without loss of generality, we can suppose that $i=0$. First observe
that there is a natural map, $g:X_0\to\Sigma_0$, defined in the
following way: for any $y\in X_0$, by condition (1), we have that
$y\in[y_1y_0]$, for some $y_i\in\Sigma_i$, set $g(y)=y_0$.

For $p\in \Sigma_0$, let $U$ be a locally isometric neighborhood of
$p$ as in Sublemma 4.12. Then $U$ satisfies that the covering map
$\psi_0:\Uparrow_x ^{\Sigma_0} \to \Sigma_0$, for any
$x\in\Sigma_1$, when restricted to each component of $\psi_0
^{-1}(U)$, is an isometry.
Indeed, if $\Sigma_0$ is an $S^1$, then nothing need to say. If
$\Sigma_0$ is not an $S^1$, let $Y_1,Y_2$ be two universal covering
spaces of $\Sigma_0$, with covering maps $c_1,c_2$. There exists an
isometry, $g:Y_1\to Y_2$, which is fiber preserving, i.e., $c_2\circ
g=c_1$. Then we can get a neighborhood $V$, such that if $c_1$, when
restricted to $c_1^{-1}(V)$, is an isometry, so is $c_2$.

Now we can define a map, $\eta: g^{-1}(U)\to U\times F$, where
$F=g^{-1}(p)$, in the following way: for $z\in g^{-1}(U)$, then
$z\in[z_1z_0]$ for some $z_i\in\Sigma_i$, set $\eta
(z)=(z_0,([z_1p],|zz_1|))$, where $([z_1p],|zz_1|)$ represents a
point on the geodesic $[z_1p]$ with distance $|zz_1|$ from $z_1$,
and $[z_1p]$ is the geodesic such that
$[\uparrow_{z_1}^p\uparrow_{z_1}^{z_0}]$ is the horizontal lifting
of $[pz_0]$, i.e., $|\uparrow_{z_1}^p\uparrow_{z_1}^{z_0}|=|pz_0|$,
which is unique, by the choice of $U$. Then $\eta$ is injective and
onto. Next we will check that $\eta$ is continuous. For points
$z_i\in[z_i^1z_i^0],z_i\to y\in[y_1y_0]$, since $|py_0|\leq
|\uparrow_{y_1}^p\uparrow_{y_1}^{y_0}|\leq\lim\inf
|\uparrow_{z_i^1}^p\uparrow_{z_i^1}^{z_i^0}|=\lim\inf
|pz_i^0|=|py_0|$, we can get the continuity of $\eta$. On the other
hand, we can see that the inverse of $\eta$, $ \bar\eta :U\times
F\to g^{-1}(U)$, is defined by $\bar\eta((x_0,f))=([f_1x_0],|ff_1|),
\ \text{where}\ f\in[pf_1], \ \text{and}\
[\uparrow_{f_1}^{x_0}\uparrow_{f_1}^p]$ is the horizontal lifting of
$[x_0p]$. Likewise, we can see that $\bar\eta$ is injective, onto
and continuous. Hence $\eta$ is a homeomorphism, and obviously
$p_2\circ \eta=g$, where $p_2:U\times F\to F$ is the projection to
the second factor. Thus we get the Sublemma.
\end{proof}

Thus $X_1$ is $D^{J+1} - bundle\ over\ S^I/\Gamma_1$ and $X_0$ is
$D^{I+1}- bundle\ over\ S^J/\Gamma_0$.\\

If $I=1$ and $J=1$ (cf. [42, Theorem 14.5]), $\Sigma$ is glued by
two solid tori (since by \cite{[Ha]} we know that Klein bottle can
not be embedded into $S^3$). Hence $\Sigma$ is a Lens space with
$\pi_1(\Sigma)=\mathbb
Z_m$. Therefore $m=1$.\\

If $I=1$ and $J>1$, by the long exact sequences:
\[
\rightarrow \pi_1(S^J)\rightarrow \pi_1(\partial X_1)\rightarrow
\pi_1(S^1)\rightarrow 0,
\]
\[\rightarrow \pi_1(S^1)\rightarrow
\pi_1(\partial X_0)\rightarrow \pi_1(S^J/\Gamma_0)\rightarrow 0,
\]
we have that $\pi_1(\partial X_1)=\mathbb Z$ and $\Gamma_0$ is a
quotient group of $\mathbb Z$. Hence $\Gamma_0=\mathbb Z_m$. By
Alexander duality, we have that $\mathbb Z_m=\widetilde
H_1(S^J/\Gamma_0;\mathbb Z)=\widetilde H_1(S^n\backslash S^1;\mathbb
Z)=\widetilde H^{n-1-1}(S^1;\mathbb Z)=0$, where the last identity
is
because $n> 3$. Thus $m=1$. Similarly for $I>1$ and $J=1$.\\

If $I>1$ and $J>1$, denote $\widetilde X_i=\psi_i^*(X_i)$; the pull
back bundle by the covering map $\psi_i$. Then $\widetilde X_i$ is
an $m$-fold cover of $X_i$. The covering map is denoted by
$\phi_i,i=0,1$. Since $\pi_1(\widetilde X_i)=0$, $\widetilde X_i$ is
the universal cover of $X_i$.

Because $\partial\widetilde X_1$ and $\partial \widetilde X_0$ are
both universal coverings of $\partial X_1=\partial X_0$, there
exists a homeomorphism $h_0:\partial\widetilde X_1\to\partial
\widetilde X_0, $ such that $\phi_1=\phi_0\circ h_0$. Set $\tilde
X=\tilde X_1\cup \widetilde X_0/\sim$ (topologically), where $x\sim
h_0(x)$, when $x\in\partial \widetilde X_1$. We will show that
$\widetilde X$ is an $m-$fold cover of $X$.

Let $h:\widetilde X\to X$ be defined by:
$$h(x)=\left\{ \begin{array}{cl} \phi_1(x), & x\in \widetilde X_1,\\ \phi_0(x), &
    x\in\widetilde X_0.\end{array}\right.$$
If $x\in\partial \widetilde X_1$, $h(x)=\phi _1(x)=\phi_0(h_0(x))$.
Thus $h$ is well defined and continuous. Next we will check that $h$
is a covering map.

Since for $x\in X_1\cap X_0$, there exists a small neighborhood $U$
of $x$, denote $U_i=X_i\cap U$, such that for the closure $\overline
U_i$, we have that $\phi_i^{-1}(\overline U_i)=\cup_{j=1,\cdots,
m}\overline U_{ij},i=0,1$, with $\phi_i|_{\overline U_{ij}}$ are
homeomorphisms. Thus $h^{-1}(\overline U)=\cup_{j=1,\cdots,
m}(\overline U_{1j}\cup_{h_0}\overline U_{0j'})$. And it is clear
that $h|_{\overline U_{1j}\cup_{h_0}\overline U_{0j'}}$ is bijective
and onto, therefore a hoemomorphism (since $\overline
U_{1j}\cup_{h_0}\overline U_{0j'}$ is compact). Thus
$h|_{U_{1j}\cup_{h_0}U_{0j'}}$ is a hoemomorphism. It follows that
$\widetilde X$ is an $m-$fold cover of $\Sigma$, and
therefore $m=1$.\\

Finally, we will show that $\Sigma=\Sigma_0*\Sigma_1$. As can be
seen in the proof of Lemma 4.7, we have that for $ p\in \Sigma$, $p$
can be uniquely written as $([xy],s)$, where $x\in \Sigma_1,\;y\in
\Sigma_0$ and $s=|px|$. By now we can construct a map,
$\xi:\Sigma\to \Sigma_0*\Sigma_1$, such that $\xi|_{\Sigma_i},i=0,1$
are isometries, and maps $u=([xy],s)$ to $([\xi(x)\xi(y)],s)$, where
$s=|\xi(u)x|$. For every $p_1=([x_1y_1],s),\
p_2=([x_2y_2],t)\in\Sigma$, since $|x_1p_2|=|\xi(x_1)\xi(p_2)|$ and
$|y_1p_2|=|\xi(y_1)\xi(p_2)|$, by triangle comparison, we get that
$|p_1p_2|\geq|\xi(p_1)\xi(p_2)|$. Hence
$\angle(\uparrow_p^{z_1},\uparrow_p^{z_2})\geq\angle(\uparrow_{\xi(p)}^{\xi(z_1)},
\uparrow_{\xi(p)}^{\xi(z_2)})$, for $p\in \Sigma_1$ and
$z_i\in\Sigma$, that is, the induced map, $\xi_*:\Sigma_p\Sigma\to
\Sigma_{\xi(p)}(\Sigma_0*\Sigma_1)$, is 1-Lipschitz. Since
$\Sigma_p\Sigma$ is isometric to
$\Sigma_{\xi(p)}(\Sigma_0*\Sigma_1)$, we can see that $\xi_*$ is an
isometry. Specially,
$\angle(\uparrow_p^{p_1},\uparrow_p^{p_2})=\angle(\uparrow_{\xi(p)}^{\xi(p_1)},
\uparrow_{\xi(p)}^{\xi(p_2)})$. Then by hinge comparison, we have
that $|p_1p_2|\leq|\xi(p_1)\xi(p_2)|$. Thus we get that $\xi$ is an
isometry.
\end{proof}

\subsection{Proof of Theorem 4.6}
\ \\

Before we focus on the proof of Theorem 4.6, we collect some lemmas
which will be used in the proof of Theorem 4.6. We begin with the
following very simple lemma:
\begin{lemma}
Let $ X\in \Alex ^n(\kappa),\ n\geq 2$ and let $C\subset X$ be a
closed locally convex subset with $\dim(C)=n$. If $\partial C\neq
\emptyset$, suppose that $C\cap \partial X=\partial C$, then $C=X$.
\end{lemma}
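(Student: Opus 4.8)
The plan is to show $Fr(C)=\emptyset$; since $X$ is connected and $C$ is closed and nonempty (because $\dim C=n$), this immediately gives $C=X$. I argue by induction on $n$ (the case $n=1$ is easy and serves as the base case: when $\partial C\neq\emptyset$, for $r\in Fr(C)\subseteq\partial X$ the space $\Sigma_rX$ is a single point, and the argument of the last paragraph below then forces $Fr(C)=\emptyset$; when $\partial C=\emptyset$ one has $Fr(C)\subseteq\partial C=\emptyset$). By Theorem 2.13 applied locally — for $p\in Fr(C)$ take a small convex ball $B\ni p$, so $C\cap B$ is convex of full dimension in $B$ — we get $Fr(C)\subseteq\partial C$. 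If $\partial C=\emptyset$ we are done; otherwise the hypothesis gives $\partial C=C\cap\partial X$, hence $Fr(C)\subseteq C\cap\partial X\subseteq\partial X$.

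The crucial point is that $\Sigma_rC=\Sigma_rX$ for every $r\in Fr(C)$. Fix such an $r$; then $r\in\partial X$, so $\partial\Sigma_rX\neq\emptyset$, and $\Sigma_rC$ is a closed, locally convex subset of $\Sigma_rX\in\Alex^{n-1}(1)$ with $\dim\Sigma_rC=\dim C-1=n-1=\dim\Sigma_rX$. I claim the pair $(\Sigma_rC,\Sigma_rX)$ again satisfies the hypothesis of the Lemma, i.e.\ $\Sigma_rC\cap\partial\Sigma_rX=\partial\Sigma_rC$ whenever $\partial\Sigma_rC\neq\emptyset$. Indeed, Theorem 2.13 applied to $\Sigma_rC\subseteq\Sigma_rX$ gives $\partial\Sigma_rC=Fr_{\Sigma_rX}(\Sigma_rC)\cup(\Sigma_rC\cap\partial\Sigma_rX)$, so it suffices to show $Fr_{\Sigma_rX}(\Sigma_rC)\subseteq\partial\Sigma_rX$, and for that I prove $Fr_{\Sigma_rX}(\Sigma_rC)\subseteq\Sigma_r\!\big(Fr(C)\big)$: a direction $w$ in the frontier of $\Sigma_rC$ is a limit of directions $\uparrow_r^{z_j}$ with $z_j\to r$ and $z_j\notin C$ (a geodesic from $r$ with initial direction outside $\Sigma_rC$ leaves $C$ at once), and the nearest points $c_j\in C$ of $z_j$ satisfy $c_j\in Fr(C)$, $c_j\to r$, $\uparrow_r^{c_j}\to w$; since $Fr(C)\subseteq\partial X$ this gives $\Sigma_r(Fr(C))\subseteq\Sigma_r(\partial X)\subseteq\partial\Sigma_rX$. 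With the hypothesis verified, the induction hypothesis in dimension $n-1$ applied to $(\Sigma_rC,\Sigma_rX)$ yields $\Sigma_rC=\Sigma_rX$.

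It remains to deduce $Fr(C)=\emptyset$. Suppose not; pick $r\in Fr(C)$, a point $q\notin C$ with $|qr|$ small, and $c\in C$ realizing $\dist_C(q)$. Then $c\neq q$, $c$ is close to $r$, and no interior point of $[cq]$ lies in $C$ (it would be strictly closer to $q$ than $c$), so $c\in Fr(C)$. By the first variation formula, $\angle(\uparrow_c^q,v)\geq\frac{\pi}{2}$ for every $v\in\Sigma_cC$; but $\Sigma_cC=\Sigma_cX\ni\uparrow_c^q$ by the previous step, so taking $v=\uparrow_c^q$ forces $0\geq\frac{\pi}{2}$, a contradiction. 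Hence $Fr(C)=\emptyset$ and $C=X$.

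The step I expect to be the main obstacle is the verification that the hypothesis of the Lemma descends to the pair $(\Sigma_rC,\Sigma_rX)$ — concretely, the inclusion $Fr_{\Sigma_rX}(\Sigma_rC)\subseteq\Sigma_r(Fr(C))$ — which needs a careful comparison argument showing that nearest-point projections of points lying outside $C$ can accumulate only on frontier points of $C$, and with the correct limiting direction, together with the standard fact $\Sigma_r(\partial X)\subseteq\partial\Sigma_rX$. (A tempting alternative — doubling $C$ and $X$ along their boundaries and invoking the boundaryless case — would instead require knowing that the double of $C$ is convex in the double of $X$, which does not follow directly from the results quoted above.) The remaining ingredients — Theorem 2.13, the first variation formula, and connectedness of $X$ — are routine.
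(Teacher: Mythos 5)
Your overall strategy (induction on $n$, with the contradiction coming from the first variation formula at the foot point of a nearest‑point projection, so that $\Sigma_cC=\Sigma_cX$ kills $|\!\uparrow_c^q\Sigma_cC|\geq\frac\pi2$) is sound and coincides with the paper's argument in the boundaryless case; for $\partial C\neq\emptyset$ the paper instead passes to the double of $X$, and your objection that this tacitly requires $D(C)$ to be locally convex in $D(X)$ is fair. However, your replacement has a genuine gap exactly at the step you flag, and it is not closed. The assertion that the nearest points $c_j\in C$ of $z_j$ satisfy $\uparrow_r^{c_j}\to w$ does not follow from what you have established. Two separate things are missing. First, you need $|z_jc_j|=o(|rz_j|)$; this is true, but only because $w\in Fr(\Sigma_rC)\subseteq\Sigma_rC$, so that there are geodesics $[ry]\subset C$ with direction arbitrarily close to $w$, and a comparison estimate then bounds $\dist(z_j,C)$ by $\varepsilon|rz_j|$ — without this, $|z_jc_j|$ can be comparable to $|rz_j|$ and the law of cosines only gives $\cos\tilde\angle(c_j,r,z_j)\geq |rc_j|/(2|rz_j|)$, which permits $\tilde\angle$ close to $\frac\pi2$. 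Second, and more seriously, even after one knows $\tilde\angle(c_j,r,z_j)\to 0$, Toponogov gives only the lower bound $\angle(\uparrow_r^{c_j},\uparrow_r^{z_j})\geq\tilde\angle(c_j,r,z_j)$; since the geodesics $[rc_j]$, $[rz_j]$ change with $j$, the monotonicity that forces $\tilde\angle\to\angle$ for a fixed pair of geodesics does not apply, and a sequence of pairs with $|c_jz_j|=o(|rz_j|)$ but $\angle(\uparrow_r^{c_j},\uparrow_r^{z_j})\geq\delta>0$ is not excluded by anything you say. So the key inclusion $Fr_{\Sigma_rX}(\Sigma_rC)\subseteq\Sigma_r(Fr(C))$, on which the verification of the inductive hypothesis for $(\Sigma_rC,\Sigma_rX)$ rests, remains unproved.

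Since this inclusion is what allows you to invoke the induction hypothesis at all when $\partial\Sigma_rC\neq\emptyset$, the proof is incomplete at its central point; your own closing paragraph concedes that the step "needs a careful comparison argument" rather than supplying one. (A smaller point: the inclusion $\Sigma_r(\partial X)\subseteq\partial\Sigma_rX$ is also a nontrivial fact — standardly seen by passing to the double — so if you are unwilling to use doubling for $C$, you should at least justify using it, or an equivalent, for $X$.) To repair the argument you must either prove the direction statement (for instance by upgrading the blow‑up argument: show that in the rescalings $\bigl(\tfrac{1}{|rz_j|}X,r\bigr)\to T_rX$ both $z_j$ and $c_j$ converge to the unit vector $w$, and that frontier points of $C$ converge to frontier points of $T_rC$, which then must lie in $\partial T_rX$ because $Fr(C)\subseteq\partial X$), or revert to the paper's doubling device and supply the missing verification that $D(C)$ is locally convex in $D(X)$ under the hypothesis $\partial C=C\cap\partial X$.
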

\begin{proof}
First suppose that $\partial C=\emptyset$. We will proceed by
induction, if $n=1$, we can see that the lemma holds. For the
inductive step, argue by contradiction. Suppose that there is $x\in
X\backslash C$, let $y\in C$ be a point such that $|xy|=|xC|$. By
induction, we have that $\uparrow _y^x\in \Sigma_yC$. This is a
contradiction.

If $\partial C\neq\emptyset$, by considering the double of $X$, we
can get the desired result.
\end{proof}
\begin{lemma}
Let the assumptions be as in Theorem 4.6. 
If $\Sigma_1=S^1(r)$ with $r\leq 1$, then 
for any $v\in \Sigma$, there are $v_i\in \Sigma_i, i=0,1$, such that
$v\in[v_0v_1]$.
\end{lemma}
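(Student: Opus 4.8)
The plan is to prove the (equivalent) statement that $\dist_{\Sigma_0}+\dist_{\Sigma_1}\leq\frac\pi2$ on all of $\Sigma$. First note that since $\Sigma_0$ and $\Sigma_1$ are closed convex subsets without boundary and of positive dimension (here $\dim\Sigma_1=1$, and $\dim\Sigma_0=n-2\geq1$ once $n\geq3$), Lemma 1.2 gives $|q_0q_1|=\frac\pi2$ for every $q_0\in\Sigma_0$, $q_1\in\Sigma_1$. Hence, for any $v\in\Sigma$, taking nearest points $p\in\Sigma_0$, $q\in\Sigma_1$ and applying the triangle inequality to $p,v,q$ yields $\dist_{\Sigma_0}(v)+\dist_{\Sigma_1}(v)\geq|pq|=\frac\pi2$; and if moreover $\dist_{\Sigma_0}(v)+\dist_{\Sigma_1}(v)=\frac\pi2$, then $[pv]\cup[vq]$ has length $|pq|$, so it is a minimal geodesic from $\Sigma_0$ to $\Sigma_1$ through $v$, i.e.\ $v\in[\Sigma_0\Sigma_1]$. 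Conversely, if $v\in[v_0v_1]$ with $v_0\in\Sigma_0$, $v_1\in\Sigma_1$, then $\dist_{\Sigma_0}(v)+\dist_{\Sigma_1}(v)\leq|v_0v|+|vv_1|=|v_0v_1|=\frac\pi2$. Thus the lemma is equivalent to $\dist_{\Sigma_0}+\dist_{\Sigma_1}\leq\frac\pi2$, and since this sum equals $\frac\pi2$ on $\Sigma_0\cup\Sigma_1$ and is always $\geq\frac\pi2$, it suffices to show that the maximum of $\dist_{\Sigma_0}+\dist_{\Sigma_1}$ is attained on $\Sigma_0\cup\Sigma_1$.

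I would prove this by induction on $n=\dim\Sigma$. For $n=2$ we have $\Sigma_0=\{p,p'\}$ with $|pp'|=\pi$ and $\Sigma_1=\{u:|up|\geq\frac\pi2,\ |up'|\geq\frac\pi2\}$; since every triangle in an $\Alex(1)$ space has perimeter $\leq2\pi$, for each $v$ one gets $|vp|+|vp'|\leq\pi=|pp'|$, so $[vp]\cup[vp']$ is a minimal geodesic $[pp']$ of length $\pi$ through $v$, whose midpoint $m$ (at distance $\frac\pi2$ from each of $p,p'$) lies in $\Sigma_1$. Hence $v\in[pm]\cup[mp']\subseteq[\Sigma_0\Sigma_1]$, and the base case is complete.

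For the inductive step ($n\geq3$), suppose $\dist_{\Sigma_0}+\dist_{\Sigma_1}$ attains its maximum at some $v^*\notin\Sigma_0\cup\Sigma_1$, with value $\frac\pi2+\delta$, $\delta>0$ (so $0<\dist_{\Sigma_0}(v^*),\dist_{\Sigma_1}(v^*)<\frac\pi2$). Let $p\in\Sigma_0$ be a nearest point to $v^*$, and pass to $\Sigma_p\Sigma\in\Alex^{n-1}(1)$, which is again topologically nice and homeomorphic to a sphere. Put $\Sigma_0^p=\Sigma_p\Sigma_0$ (closed convex, without boundary, of dimension $n-3$) and $\Sigma_1^p=\{w\in\Sigma_p\Sigma:|w\Sigma_0^p|\geq\frac\pi2\}$. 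Exactly as in Sublemma 4.10 and the lines following it one checks: $\Uparrow_p^{\Sigma_1}\subseteq\Sigma_1^p$ (first variation, using $\dist_{\Sigma_0}\equiv\frac\pi2$ on $\Sigma_1$); the natural map $\Uparrow_p^{\Sigma_1}\to\Sigma_1$ is a submetry (horizontal lifts come from Lemma 2.6 applied to the totally geodesic spherical triangles spanned by $[pq_1]$, $q_1\in\Sigma_1$, and arcs of $\Sigma_1$), so $\dim\Uparrow_p^{\Sigma_1}\geq1$; while Lemma 4.7 gives $\dim\Sigma_1^p\leq(n-1)-(n-3)-1=1$. Therefore $\Sigma_1^p$ is a one-dimensional closed convex boundaryless $\Alex(1)$ space, i.e.\ a circle $S^1(r')$ with $r'\leq1$, and the closed one-dimensional subset $\Uparrow_p^{\Sigma_1}$ is all of it. After verifying that the two inequalities $|w\Sigma_0^p|\leq\frac\pi2$ and $|w\Sigma_1^p|\leq\frac\pi2$ descend to $\Sigma_p\Sigma$ from the corresponding hypotheses on $\Sigma$, the triple $(\Sigma_p\Sigma,\Sigma_0^p,\Sigma_1^p)$ satisfies the hypotheses of the lemma in dimension $n-1$, so by induction $\uparrow_p^{v^*}\in[\Sigma_0^p\,\Sigma_1^p]$, say $\uparrow_p^{v^*}\in[w_0,w_1]$ with $w_0\in\Sigma_0^p$ and $w_1=\uparrow_p^{q_1}$ for some $q_1\in\Sigma_1$.

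It remains to lift this back to $\Sigma$: pick $q_0\in\Sigma_0$ with $\uparrow_p^{q_0}$ equal to (or close to) $w_0$; since $|pq_1|=|q_0q_1|=\frac\pi2$ and $\uparrow_{q_0}^{q_1}\perp\Sigma_{q_0}\Sigma_0$ (first variation, as $q_1\in\Sigma_1$ is at distance $\frac\pi2$ from all of $\Sigma_0$), a spherical law-of-cosines computation gives $\angle(q_1,q_0,p)=\tilde\angle(q_1,q_0,p)=\frac\pi2$, so Lemma 2.6 produces a totally geodesic triangle $T$ on $p,q_0,q_1$ isometric to a spherical triangle in which $q_1$ is polar to the side $[pq_0]$; since $\uparrow_p^{v^*}\in\Sigma_pT=[\uparrow_p^{q_0}\uparrow_p^{q_1}]$ and $T$ is totally geodesic, one argues that $[pv^*]$ runs inside $T$ and extends there to a minimal geodesic of length $\frac\pi2$ ending at a point $q_1''\in\Sigma_1$, whence $v^*\in[pq_1'']\subseteq[\Sigma_0\Sigma_1]$ and $\dist_{\Sigma_0}(v^*)+\dist_{\Sigma_1}(v^*)=\frac\pi2$, contradicting $\delta>0$. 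The two steps I expect to be the main obstacles are: (a) showing the two ``$\leq\frac\pi2$'' conditions pass to $\Sigma_p\Sigma$, which may need a separate short argument (for instance, first proving that $\hat\Sigma_1:=\{u\in\Sigma:|u\Sigma_1|\geq\frac\pi2\}$ equals $\Sigma_0$, using that $\hat\Sigma_1$ is convex by Toponogov together with the hypothesis $|u\Sigma_1|\le\frac\pi2$, and that the nearest-point geodesic from $\Sigma_0$ to any alleged extra point of $\hat\Sigma_1$ would have to reach $\Sigma_1$); and (b) the reconstruction step — promoting ``$\uparrow_p^{v^*}$ lies on a join segment in $\Sigma_p\Sigma$'' to ``$v^*$ lies on $[\Sigma_0\Sigma_1]$'' — where one must choose $q_0$ so that the triangle $T$ is wide enough to contain $v^*$, and use non-branching of geodesics to control the extension of $[pv^*]$ inside $T$ until it meets $\Sigma_1$.
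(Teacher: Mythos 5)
Your overall strategy is genuinely different from the paper's, and it has a gap at its load-bearing step. The induction requires you to verify, for the nearest point $p\in\Sigma_0$ to $v^*$, that $\Sigma_1^p=(\Sigma_p\Sigma_0)^{\perp}$ is a \emph{circle} and that $\Uparrow_p^{\Sigma_1}$ exhausts it. Your justification is a dimension count: Lemma 4.7 gives $\dim\Sigma_1^p\leq 1$ and the submetry $\Uparrow_p^{\Sigma_1}\to\Sigma_1$ gives $\dim\geq 1$. But a one-dimensional convex subset of an Alexandrov space can just as well be an interval $[ab]$, and a closed one-dimensional subset of a circle need not be the whole circle (an arc of length $\geq 2\pi r$ admits a $1$-Lipschitz surjection onto $S^1(r)$ when $r\leq 1/2$, so surjectivity of $\psi_1$ does not save you). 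This matters because the statement you are inducting on is \emph{false} when the perpendicular set is an interval: in Case 4 of Theorem 4.6, $\Sigma$ strictly contains $[\Sigma_0\Sigma_1]$. To rule out the interval you would need the covering-map machinery of Sublemmas 4.11--4.12 (so that $\Uparrow_p^{\Sigma_1}$ is a compact $1$-manifold without boundary covering $S^1(r)$, hence a circle, hence equal to the $1$-dimensional convex set containing it); but as written in the paper those sublemmas are established under condition (1) of Theorem 4.4 --- precisely the join property this lemma is meant to supply --- so invoking them ``exactly as in'' the paper risks circularity and at minimum must be re-derived in your setting. The same unproved identification is what lets you write $w_1=\uparrow_p^{q_1}$ for an actual $q_1\in\Sigma_1$ in the lifting step.

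For contrast, the paper's proof is direct and avoids the induction entirely: for $v\in\Sigma$ take the nearest point $w\in\Sigma_0$, note $\uparrow_w^v\in(\Sigma_w\Sigma_0)^{\perp}$ by first variation and Lemma 1.2, identify $(\Sigma_w\Sigma_0)^{\perp}=\Uparrow_w^{\Sigma_1}=S^1$, and conclude $v\in[wq]$ for some $q\in\Sigma_1$ since $|wv|\leq\frac{\pi}{2}=|wq|$ and geodesics with a common initial direction do not branch. Note also that your maximum-principle scaffolding is vestigial: the argument you run at $v^*$ (nearest point, join of directions, totally geodesic triangle) never uses maximality and would apply to an arbitrary $v$, which is essentially the paper's route once the perpendicular set is identified. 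Your base case $n=2$ and the equivalence with $\dist_{\Sigma_0}+\dist_{\Sigma_1}\leq\frac{\pi}{2}$ are fine.
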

\begin{proof}
For $v\in \Sigma$, let $w\in\Sigma_0$ such that $|vw|=|v\Sigma_0|$.
Then by Lemma 1.2, we have that
$\uparrow_w^v\in(\Sigma_w\Sigma_0)^{\perp}
$. Since $\Uparrow_w^{S^1}=(\Sigma_w\Sigma_0)^{\perp}=S^1$, the
result follows.
\end{proof}

\begin{lemma}
Let $A\in \Alex^n(1),\ \partial A\neq \emptyset$. Then $A$ can't
contain a convex closed subset without boundary with positive
dimension whose intersection with $\partial A$ is empty.
\end{lemma}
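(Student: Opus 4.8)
The plan is to argue by contradiction. Suppose $A\in\Alex^n(1)$ has $\partial A\neq\emptyset$ yet contains a convex closed subset $C$ with $\partial C=\emptyset$, $\dim C\ge 1$, and $C\cap\partial A=\emptyset$. Since spaces in $\Alex^n(1)$ are compact (Bonnet--Myers), $A$ is compact and hence so is the closed set $C$; being convex with $\partial C=\emptyset$, $C$ is itself a compact Alexandrov space of curvature $\ge 1$ without boundary, and its minimal geodesics are minimal geodesics of $A$. Put $h=\dist_{\partial A}$. Because $C$ is compact and disjoint from $\partial A$, $\delta:=\min_{C}h>0$; also $h\le\frac{\pi}{2}$ on all of $A$: passing to the double $DA\in\Alex^n(1)$ (in which $A$ embeds isometrically), $2h(x)$ is the distance in $DA$ from $x$ to its mirror image, which is at most $\diam(DA)\le\pi$.

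The key input is the spherical analogue of the classical concavity of the distance-to-boundary function: for $A\in\Alex^n(1)$ with $\partial A\neq\emptyset$, the function $g:=\sin h=\sin\dist_{\partial A}$ is $1$-concave, i.e.\ along every unit-speed geodesic $\gamma$ of $A$ one has $(g\circ\gamma)''+(g\circ\gamma)\le 0$ in the barrier sense (this is the curvature-$\ge 1$ counterpart of the fact, recalled in Section~1.1, that $\dist_{\partial A}$ is concave when the curvature is $\ge 0$). Since geodesics of $C$ are geodesics of $A$, the same holds along geodesics of $C$; and on $C$ we have $g=\sin h\ge\sin\delta>0$, so $(g\circ\gamma)''\le -(g\circ\gamma)\le -\sin\delta<0$. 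In other words, $g|_{C}$ is $(-\sin\delta)$-concave.

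To conclude, choose any $p\in C$ and, using $\dim C\ge 1$, any direction $\xi\in\Sigma_pC$, and extend the corresponding geodesic to a quasigeodesic $\gamma\colon[0,\infty)\to C$; this extension exists precisely because $C$ has no boundary. Since $(-\sin\delta)$-concavity is inherited by quasigeodesics, $t\mapsto g(\gamma(t))$ satisfies $(g\circ\gamma)''\le-\sin\delta$ on $[0,\infty)$, whence $g(\gamma(t))\le g(\gamma(0))+(g\circ\gamma)'(0^+)\,t-\frac12(\sin\delta)\,t^2\to-\infty$ as $t\to\infty$, contradicting $g(\gamma(t))\ge\sin\delta>0$. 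This contradiction will prove the lemma. The only steps that are not routine are the invocation of the $1$-concavity of $\sin\dist_{\partial A}$ and the fact that such semiconcavity passes to quasigeodesics in boundaryless spaces; both are standard in Alexandrov geometry, and in the case at hand one could alternatively extract what is needed by working directly in the double $DA$, where $\partial A$ is the fixed-point set of an isometry and $C$ lies in the interior of one copy.
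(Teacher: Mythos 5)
Your proof is correct, but it takes a genuinely different route from the paper's. The paper's argument is soft: it takes a point $p\in C$ minimizing $h=\dist_{\partial A}$ on $C$, uses the first variation formula together with Lemma 1.2 to conclude $|{\uparrow_p^{\partial A}}\,v|=\frac{\pi}{2}$ for all $v\in\Sigma_pC$, deduces that $h$ is constant on $C$, and then applies Theorem 0.5 to a minimal geodesic of $C$ to produce a flat totally geodesic rectangle, which is impossible in curvature $\geq 1$. Your argument instead is quantitative: it rests on the spherical concavity $(\sin h)''\le -\sin h$ (the $\kappa=1$ analogue of the concavity result the paper itself cites for $\kappa=0$), the uniform bound $\sin h\ge \sin\delta>0$ on $C$, and the Perelman--Petrunin fact that quasigeodesics in the boundaryless space $C$ extend to $[0,\infty)$ and preserve semiconcavity; this forces a function bounded below to have second derivative $\le-\sin\delta$, a contradiction. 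The paper's proof buys economy --- it reuses only tools already quoted in the paper (Theorem 0.5, Lemma 1.2, first variation) --- whereas yours avoids the flat-strip theorem and the constancy of $h$ on $C$ entirely, at the cost of importing two external but standard inputs. One point you should make explicit: the semiconcavity you need along the quasigeodesic is that of $g|_C$ as a function on the Alexandrov space $C$ (the quasigeodesic is intrinsic to $C$), and this holds because, by convexity, minimal geodesics of $C$ are minimal geodesics of $A$, so the barrier inequality for $g$ in $A$ transfers to $C$; you gesture at this but it is the hinge of the argument. With that made precise, the proof stands.
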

\begin{proof}
Argue by contradiction. Suppose that there exists a convex closed
subset $C$ without boundary with positive dimension such that $C\cap
\partial A$ is empty. Let $h=\dist_{\partial A}$ and let $p\in C$ be
a point such that $h(p)=\min_{x\in C}\{h(x)\}$. By the first
variation formulae, $|\Uparrow_p^{\partial
A}\Sigma_pC|\geq\frac{\pi}{2}$. By Lemma 1.2, $|\uparrow_p^{\partial
A}v|=\frac{\pi}{2}$, for any $ v\in \Sigma_pC$. Then for any $x\in
C$, we have that $
0=d_ph(\uparrow_p^x)\geq\dfrac{h(x)-h(p)}{|px|}\Rightarrow h(p)\geq
h(x)$. Hence $h(x)=constant$, for any $x\in C$. Let $\gamma\subset
C$ be a minimal geodesic. By Theorem 0.5, there exists a flat
rectangle, a contradiction to $A\in \Alex^n(1)$.
\end{proof}
\begin{lemma}
Let $\Sigma\in \Alex^n(1)$. Suppose that $\Sigma$ is homeomorphic to
a sphere and  topologically nice. Let $\Sigma_0\subset \Sigma$ be a
convex closed subset with dimension $n-1$, without boundary. Then
$\Sigma$ is homeomorphic to $S(\Sigma_0)$, and $\Sigma_0$ is
homeomorphic to a sphere and topologically nice.
\end{lemma}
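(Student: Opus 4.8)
The plan is to deduce the lemma from Theorem 4.4. Set $\Sigma_1:=\{v\in\Sigma\mid |v\Sigma_0|\ge\pi/2\}$. If one verifies the three hypotheses of Theorem 4.4 for the pair $(\Sigma_0,\Sigma_1)$ --- that $\Sigma_1\ne\emptyset$; that every $v\in\Sigma$ lies on some $[v_0v_1]$ with $v_0\in\Sigma_0$ and $v_1\in\Sigma_1$ (condition (1)); and that $\dim\Sigma_0+\dim\Sigma_1+1=n$ (condition (2)) --- then Theorem 4.4 gives $\Sigma=\Sigma_0*\Sigma_1$ and, in addition, that $\Sigma_0$ and $\Sigma_1$ are topologically nice and homeomorphic to spheres. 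Since $\Sigma_1$ is then a $0$-dimensional sphere, that is, a pair of points at distance $\pi$, the join $\Sigma_0*\Sigma_1$ is the spherical suspension, so $\Sigma$ is homeomorphic to $S(\Sigma_0)$ and $\Sigma_0$ is a topologically nice sphere; this route in fact yields the sharper isometric statement $\Sigma=S(\Sigma_0)$.

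Condition (2) is the easy part. Since $\partial\Sigma_0=\emptyset$, Lemma 1.2 shows $|v_1\xi|=\pi/2$ for every $v_1\in\Sigma_1$ and every $\xi\in\Sigma_0$, so the pair $(\Sigma_0,\Sigma_1)$ satisfies the hypothesis of Lemma 4.7; hence $n\ge(n-1)+\dim\Sigma_1+1$, which forces $\dim\Sigma_1=0$ once $\Sigma_1\ne\emptyset$, and then $\dim\Sigma_0+\dim\Sigma_1+1=n$. The other two hypotheses I would prove by induction on $n$. The base case $n=1$ reduces to the remark that a circle of length $\le2\pi$ containing two points at distance $\pi$ must be the round $S^1$. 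For the inductive step fix $w\in\Sigma_0$: then $\Sigma_w\Sigma\in\Alex^{n-1}(1)$ is topologically nice and homeomorphic to a sphere, and $\Sigma_w\Sigma_0$ is a convex closed subset of dimension $n-2$ without boundary, so by the inductive hypothesis $\Sigma_w\Sigma=\Sigma_w\Sigma_0*(\Sigma_w\Sigma_0)^{\perp}$ with $(\Sigma_w\Sigma_0)^{\perp}$ a pair of points at distance $\pi$ and $\Sigma_w\Sigma_0$ a topologically nice sphere. It follows that $\Sigma_0$ is topologically nice, hence a closed topological $(n-1)$-manifold, and that a neighbourhood of $w$ in $\Sigma$ is homeomorphic to $\mathbb{R}^{n-1}\times\mathbb{R}$ with $\Sigma_0$ corresponding to $\mathbb{R}^{n-1}\times\{0\}$; so $\Sigma_0$ is a locally flat codimension-one submanifold of $\Sigma\cong S^n$. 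For condition (1), given $v\in\Sigma\setminus\Sigma_0$ let $w\in\Sigma_0$ be a foot with $|vw|=|v\Sigma_0|<\pi/2$; the first variation formula at the interior point $w$ forces $\uparrow_w^v\in(\Sigma_w\Sigma_0)^{\perp}$, so $[wv]$ is an initial arc of one of the two geodesics $\delta_w^{\pm}$ issuing perpendicularly from $\Sigma_0$ at $w$, and it remains to see that $\delta_w^{\pm}$ keeps realizing the distance to $\Sigma_0$ out to parameter $\pi/2$, so that its endpoint lies in $\Sigma_1$ and $v\in[\Sigma_0\Sigma_1]$.

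The heart of the matter, and the step I expect to be the main obstacle, is exactly this last claim together with $\Sigma_1\ne\emptyset$: one must show $\max_\Sigma\dist_{\Sigma_0}\ge\pi/2$ and that the perpendicular geodesics $\delta_w^{\pm}$ do not cease to be shortest connections to $\Sigma_0$ before parameter $\pi/2$. My plan here is to combine the comparison inequality for $\dist_{\Sigma_0}$ along geodesics --- which, together with the convexity of $\Sigma_0$, controls the motion of the nearest-point foot and excludes critical points of $\dist_{\Sigma_0}$ at values in $(0,\pi/2)$ --- with the topological constraint that $\Sigma$ is a sphere: being a locally flat codimension-one submanifold, $\Sigma_0$ separates $\Sigma\cong S^n$ into two compact manifolds with common boundary $\Sigma_0$, and were $\dist_{\Sigma_0}<\pi/2$ everywhere, Perelman's Morse theory applied to $\dist_{\Sigma_0}$ together with Alexander duality would make $\Sigma_0$ a $\mathbb{Z}$-acyclic closed $(n-1)$-manifold, which is impossible. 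Once $\Sigma_1\ne\emptyset$ and condition (1) hold, the gradient flow of $\dist_{\Sigma_0}$ and the connectedness of $\Sigma_0$ show that $\Sigma_1$ consists of exactly two points at distance $\pi$, and Theorem 4.4 then finishes both the lemma and the induction.
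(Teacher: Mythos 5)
Your reduction to Theorem 4.4 cannot work: the step you yourself single out as ``the heart of the matter'' --- that $\Sigma_1=\{v\in\Sigma\mid |v\Sigma_0|\ge\frac{\pi}{2}\}$ is nonempty and that every point lies on some $[v_0v_1]$ --- is not merely the main obstacle, it is false under the hypotheses of the lemma. Take $\Sigma$ to be the round sphere of constant curvature $c^2>1$ (so $\Sigma\in\Alex^n(1)$, topologically nice, homeomorphic to a sphere) and $\Sigma_0$ a totally geodesic equatorial $S^{n-1}$ in it: this is convex, closed, without boundary, of dimension $n-1$, yet $\max_\Sigma\dist_{\Sigma_0}=\frac{\pi}{2c}<\frac{\pi}{2}$ (the perpendicular geodesics from $\Sigma_0$ all focus at the two poles before parameter $\frac{\pi}{2}$), so $\Sigma_1=\emptyset$ and Theorem 4.4 is inapplicable. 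Moreover the sharper isometric conclusion $\Sigma=S(\Sigma_0)$ that your route would deliver is false here, since $\diam(\Sigma)=\frac{\pi}{c}<\pi=\diam(S(\Sigma_0))$. Your proposed topological argument for forcing $\max\dist_{\Sigma_0}\ge\frac{\pi}{2}$ also does not yield a contradiction: $\Sigma\setminus\Sigma_0$ consists of two components, each the interior of a cone over $\Sigma_0$ and hence acyclic, so Alexander duality gives $\widetilde H^{n-1}(\Sigma_0)=\mathbb{Z}$ and $\widetilde H^{j}(\Sigma_0)=0$ for $j<n-1$; that makes $\Sigma_0$ a homology $(n-1)$-sphere, which is exactly the conclusion of the lemma and not an absurdity. (Contrast this with the duality argument inside the proof of Theorem 4.4, which needs \emph{both} $\Sigma_0$ and a nonempty $\Sigma_1$ to retract onto.)

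The paper's proof is purely topological and never invokes $\Sigma_1$. By induction on $n$ applied to $\Sigma_p\Sigma\supset\Sigma_p\Sigma_0$ for $p\in\Sigma_0$, the subset $\Sigma_0$ is a closed topological $(n-1)$-manifold, hence separates $\Sigma\cong S^n$ into two components $H_1,H_2$; convexity of $\Sigma_0$ gives convexity of $\bar H_i$, Theorem 2.13 identifies $\partial\bar H_i=\Sigma_0$ as Alexandrov boundary, and Perelman's Lemma 4.18 gives $(\bar H_i,\Sigma_0)\overset{homeo}{\simeq}(\bar C(\Sigma_v\Sigma),\Sigma_v\Sigma)$ for $v\in\bar H_i$ the point farthest from $\Sigma_0$. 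Topological niceness of $\Sigma$ then yields $\Sigma_0\cong\Sigma_v\Sigma\cong S^{n-1}$, and $\Sigma$, being two cones over $\Sigma_0$ glued along $\Sigma_0$, is homeomorphic to $S(\Sigma_0)$. If you want to keep your strategy you must split into the cases $\Sigma_1\ne\emptyset$ and $\Sigma_1=\emptyset$, and the second case --- which is the generic one --- requires an argument of the paper's type anyway.
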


In the proof we will use the following lemma.

\begin{lemma}[{\rm{[24, 6.2]}}]
Let $C\in \Alex ^n(1)$ with $\partial C\neq \emptyset$. Let $x\in C$
be the unique point with maximal distance from $\partial C$. Then
$(C,\partial C)\overset{homeo}{\simeq }(\bar
C(\Sigma_xC),\Sigma_xC)$, where $\bar C(\Sigma_xC)\subset T_x$
denotes the closed  unit ball at the origin.
\end{lemma}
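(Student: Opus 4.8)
The plan is to build an explicit homeomorphism from the gradient flow of the boundary‑distance function $f:=\dist_{\partial C}$ and then match it against the local conical structure of $C$ at $x$. First the analytic preliminaries: $f$ is concave on $C$ — this needs only $C\in\Alex^n(0)$, and passing to the double of $C$ (on which $f$ extends to $\dist_{\partial C}$) removes any boundary bookkeeping; see the concavity results quoted in the proof of Lemma~2.8, namely [30, Theorem~3.3.1] and [9, Lemma~3.1]. A concave function has a critical point only where it attains its global maximum, so the hypothesis that $x$ is the \emph{unique} farthest point from $\partial C$ forces $\nabla f\ne 0$ on $C\setminus\{x\}$. Put $R:=f(x)=|x\partial C|$ (so $R\le\pi/2$ since $C\in\Alex^n(1)$, though this is inessential); then $\partial C=f^{-1}(0)$, $\{x\}=f^{-1}(R)$, and $(0,R)$ contains no critical value of $f$.

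Next, exactly as in the soul construction of \S1.1, reparametrize the $f$‑gradient curves so that $\alpha^{+}(t)=\nabla_{\alpha(t)}f/|\nabla_{\alpha(t)}f|^{2}$, hence $f(\alpha(t))=f(\alpha(0))+t$. For $y\in\partial C$ let $\alpha_{y}\colon[0,R)\to C$ be the forward‑unique such curve with $\alpha_{y}(0)=y$; since $f(\alpha_{y}(t))=t\to R$ while $f^{-1}([R-\delta,R])$ shrinks to $\{x\}$ as $\delta\to 0$, we get $\alpha_{y}(t)\to x$ as $t\to R^{-}$. Conversely each $z\in C\setminus\{x\}$ lies on a unique such curve: a gradient curve through a regular point of a concave function extends \emph{backward} inside the regular region $C\setminus\{x\}$ — this is the distance‑monotonicity computation used in \S1.1 to extend $\alpha(t)$ onto $C_{0}$ — and run backward it meets $f^{-1}(0)=\partial C$ at a unique point $y(z)$, with $z=\alpha_{y(z)}(f(z))$. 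Thus $\Phi\colon\partial C\times[0,R)\to C\setminus\{x\}$, $\Phi(y,t)=\alpha_{y}(t)$, is a continuous bijection (two distinct such curves cannot meet, by backward non‑branching), with continuous inverse $z\mapsto(y(z),f(z))$ by continuous dependence of gradient curves. Since $\{f^{-1}((t,R])\}_{t<R}$ is a neighbourhood basis of $x$, it follows that $C$ is obtained from $\partial C\times[0,R]$ by collapsing $\partial C\times\{R\}$ to a point, i.e. $(C,\partial C)\cong(\bar C(\partial C),\partial C)$, the closed cone over $\partial C$ with $\partial C$ as its outer boundary sphere.

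It remains to identify the cone base with $\Sigma_{x}C$. By Perelman's conical neighbourhood theorem, $x$ has a neighbourhood homeomorphic to the open cone over $\Sigma_{x}C$ with $x$ the apex; but the previous step exhibits $f^{-1}((t,R])$, $t<R$, as a neighbourhood of $x$ homeomorphic to the open cone over $\partial C$ with $x$ the apex. Since $\Sigma_{x}C$ and $\partial C$ are compact ANRs, an apex‑preserving homeomorphism of their open cones forces $\Sigma_{x}C\cong\partial C$ (the base of a cone on a compact ANR is recovered topologically). Hence $(C,\partial C)\cong(\bar C(\partial C),\partial C)\cong(\bar C(\Sigma_{x}C),\Sigma_{x}C)$, which is the assertion. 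Equivalently, one can write the homeomorphism down directly, sending $z\ne x$ to the pair consisting of the direction at $x$ along which the gradient curve through $z$ enters $x$ and the radial coordinate $1-f(z)/R$, and sending $x$ to the apex.

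The step I expect to be the real obstacle is the one that upgrades the gradient flow from a deformation retraction to a homeomorphism: the \emph{backward non‑branching} (equivalently, time‑$t$ injectivity) of $f$‑gradient curves on $C\setminus\{x\}$, together with continuous dependence near the boundary level, and — in the direct‑map variant — the surjectivity of the direction map $C\setminus\{x\}\to\Sigma_{x}C$ and the identification of a near‑$x$ level set of $f$ with $\Sigma_{x}C$. These go beyond the elementary distance estimates of \S1.1 and rest on Perelman's and Petrunin's gradient‑flow and local‑structure theory for Alexandrov spaces, which is precisely why the statement is quoted from [24, 6.2]. Granting those inputs, the concavity preliminaries and the cone comparison are routine.
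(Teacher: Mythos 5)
The paper does not prove this lemma; it is quoted verbatim from Perelman [24, 6.2]. Your attempt to supply a proof breaks down at exactly the step you flag as ``the real obstacle'': backward non-branching of the gradient curves of $f=\dist_{\partial C}$ is simply false, and it is not a citable piece of Perelman--Petrunin theory but a statement with elementary counterexamples. Gradient curves of a concave function are forward-unique and the flow is distance non-increasing, but distinct gradient curves routinely merge. Take $C$ to be an equilateral geodesic triangle in $S^2(1)$ (a convex subset, hence in $\Alex^2(1)$ with nonempty boundary, whose incenter is the unique farthest point from $\partial C$): a gradient curve of $\dist_{\partial C}$ issuing from an edge point runs perpendicular to the edge until it meets the medial axis and then turns to follow the angle bisector into the incenter, so the gradient curves from infinitely many boundary points pass through each medial-axis point. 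Hence your map $\Phi\colon\partial C\times[0,R)\to C\setminus\{x\}$ is not injective, the backward flow $z\mapsto y(z)$ is not well defined, and the claimed product structure of $C\setminus\{x\}$ is not established by this construction (the conclusion of the lemma still holds in this example, but your map does not exhibit it).

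A second, independent gap is the final identification of the cone base: a homeomorphism of open cones over compact ANRs, even one fixing the apex, does not force a homeomorphism of the bases. By the double suspension theorem the open cone over $S(P^3)$, with $P^3$ the Poincar\'e homology sphere, is homeomorphic to $\mathbb R^5=C(S^4)$ carrying apex to origin, while $S(P^3)\not\cong S^4$. The correct route --- and the reason the paper leaves this as a citation --- is Perelman's Morse theory/fibration theorem for admissible functions: your observations that $f$ is concave and has no critical points on $C\setminus\{x\}$ are correct and are the right starting point, but one must then invoke the fibration theorem to conclude that the pairs $\bigl(f^{-1}([t,R]),f^{-1}(t)\bigr)$, $0\le t<R$, are mutually homeomorphic, and the conical neighborhood (stability) theorem to identify such a pair, for $t$ close to $R$, with $\bigl(\bar C(\Sigma_xC),\Sigma_xC\bigr)$; the passage from ``no critical points'' to a cone structure cannot be made through injectivity of the gradient flow.
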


\begin{proof}[Proof of Lemma 4.17]
We will prove the lemma by induction on $n$. Clearly when $n=1$ the
lemma holds. Suppose $n-1$ the lemma holds.

First apply the inductive assumptions to $\Sigma_p\Sigma$ and
$\Sigma_p\Sigma_0$, for $p\in \Sigma_0$, we can see that $\Sigma_0$
is a topological manifold. Then we have that $\Sigma-\Sigma_0$ has
two components $H_1,H_2$, each is with set boundary $\Sigma_0$.
Observe that for every $x,y\in \bar H_i,\ [xy]\subset \bar H_i$, if
not, by the convexity of $\Sigma_0$, we can get a contradiction.
I.e., $\bar H_i$ are convex. 

By Theorem 2.13, we have that $\partial\bar  H_i=\Sigma_0,i=1,2$,
and by Lemma 4.18, we have that $(\bar H_i,\partial\bar
H_i)\overset{homeo}{\simeq }(\bar
C(\Sigma_v\Sigma),\Sigma_v\Sigma)$, where $v\in\bar H_i$ is the
point such that $|v\partial\bar H_i|=\max\{\dist_{\partial \bar
H_i}\}$. Hence
$\Sigma_0\overset{homeo}{\simeq}\Sigma_v\Sigma\overset{homeo}{\simeq
} S^{n-1}$, where the last one is by the topologically nice property
of $\Sigma$. The lemma thus follows.
\end{proof}
\begin{lemma}
Let $C\in \Alex^n(1)$ with  $\partial C=\emptyset$. Let $C_0\subset
C$ be a convex closed subset with dimension $n-1$ and without
boundary. Suppose that $C_0$ separates $C$. If there is $v\in C$
such that $|vC_0|\geq \frac{\pi}{2}$, then $[vC_0]\overset{\rm
isom}{\cong} S^+(C_0)$, where $[vC_0]$ is with the restricted
metric. In particular, for $x\in C$ with $|xv|<\frac{\pi}{2}$, we
have that $x\in[vv_0]$, for some $v_0\in C_0$.
\end{lemma}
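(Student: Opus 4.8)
The plan is to realize $[vC_0]$ as a metric half-suspension over $C_0$ by reducing the lemma to a maximal-diameter rigidity statement, and to read off the final assertion from that. First I would normalize: since $C_0$ is a closed convex subset without boundary of positive dimension with $|vC_0|\geq\frac{\pi}{2}$, Lemma~1.2 gives $|v\xi|=\frac{\pi}{2}$ for every $\xi\in C_0$; in particular $|vC_0|=\frac{\pi}{2}$, every minimizing geodesic from $v$ to $C_0$ has length exactly $\frac{\pi}{2}$, and such a geodesic meets $C_0$ only at its endpoint (an interior point on $C_0$ would be at distance $<\frac{\pi}{2}$ from $v$, contradicting Lemma~1.2).

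Next, since $C_0$ separates $C$, let $H$ be the component of $C\setminus C_0$ containing $v$ and let $\bar H$ be its closure. By the previous paragraph, for $\xi\in C_0$ a minimizing $[v\xi]$ satisfies $[v\xi]\setminus\{\xi\}\subset C\setminus C_0$, connected and containing $v$, hence $\subset H$; so $\xi\in\bar H$, giving $C_0\subset\bar H$, $Fr(H)=C_0$ and $\bar H=H\cup C_0$. One checks that $\bar H$ is convex exactly as in the proof of Lemma~4.17: a minimizing geodesic with endpoints in $\bar H$ that left $\bar H$ would re-enter it, crossing $C_0$ at two distinct points $z_1,z_2$, and comparing the subarc between them with a minimizing geodesic of $C_0$ from $z_1$ to $z_2$ (which exists by convexity of $C_0$) yields either branching geodesics at $z_1$ or a subarc lying in $C_0\subset\bar H$ — both impossible. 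Hence $\bar H$ is a convex subset, so $\bar H\in\Alex^n(1)$ with the induced metric, and since $\partial C=\emptyset$, Theorem~2.13 gives $\partial\bar H=Fr(\bar H)=C_0$.

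Now $\dist_{C_0}|_{\bar H}=\dist_{\partial\bar H}$ attains the value $\frac{\pi}{2}$ at $v$, and this is its maximum (a larger value would make $\diam D(\bar H)>\pi$). Doubling $\bar H$ across $C_0$ produces $D(\bar H)\in\Alex^n(1)$ with empty boundary, in which $v$ and its mirror copy $v'$ satisfy $|vv'|=\pi=\diam D(\bar H)$. By the maximal-diameter theorem for Alexandrov spaces of curvature $\geq 1$, $D(\bar H)$ is isometric to a spherical suspension $S(\Lambda)$, $\Lambda\in\Alex^{n-1}(1)$, with $v,v'$ as poles; the doubling involution $\iota$ is then of the form $(t,\lambda)\mapsto(\pi-t,\phi(\lambda))$ for an isometric involution $\phi$ of $\Lambda$. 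Computing the boundary of $\bar H=D(\bar H)/\iota$ in two ways — as the image of $\mathrm{Fix}(\iota)=\{\frac{\pi}{2}\}\times\mathrm{Fix}(\phi)$, and as the slice $\{t=\frac{\pi}{2}\}\cong\Lambda/\langle\phi\rangle$ — forces every $\langle\phi\rangle$-orbit in $\Lambda$ to contain a fixed point, i.e. $\phi=\mathrm{id}$; hence $\bar H\overset{\rm isom}{\cong}S^+(\Lambda)$ and $\Lambda\cong\partial\bar H=C_0$. Since $|v\xi|=\frac{\pi}{2}$, the meridians of this half-suspension are exactly the minimizing geodesics $[v\xi]$, they fill $\bar H$, and every minimizing $[v\xi]$ in $C$ lies in $\bar H$ (it meets $C_0$ only at $\xi$); therefore $[vC_0]=\bar H\overset{\rm isom}{\cong}S^+(C_0)$. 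The last assertion follows at once: if $|xv|<\frac{\pi}{2}$ then $x$ is in neither $C_0$ nor any other component of $C\setminus C_0$ (a minimizing $[vx]$ would have to cross $C_0$, forcing $|vx|\geq\frac{\pi}{2}$), so $x\in\bar H\setminus C_0=[vC_0]\setminus C_0$, i.e. $x\in[vv_0]$ for some $v_0\in C_0$.

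The step I expect to be the real obstacle is the topological part of the reduction: that $\bar H$ is genuinely a convex subset with $\partial\bar H=C_0$ (so that the doubling lives in $\Alex^n(1)$ with the correct boundary) and that the maximal-diameter cross-section is $C_0$ itself rather than a proper quotient; the remaining inputs (Lemma~1.2, Theorem~2.13, the Bonnet diameter bound, Perelman's doubling theorem, and maximal-diameter rigidity) are standard. If only the final clause is needed — and that is all the proof of Lemma~2.8 uses — one can bypass all of this: take $v_0\in C_0$ with $|xv_0|=|xC_0|$; by the first variation formula and Lemma~1.2 both $\uparrow_{v_0}^x$ and $\uparrow_{v_0}^v$ lie in $(\Sigma_{v_0}C_0)^\perp$, which is $0$-dimensional by Lemma~4.7 (its dimension is at most $(n-1)-(n-2)-1=0$); since geodesics in Alexandrov spaces do not branch, comparing these two directions forces $v_0,v,x$ to be colinear, and $|xv|<\frac{\pi}{2}$ excludes every configuration except $x\in[vv_0]$.
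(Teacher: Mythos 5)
Your argument follows the paper's proof of this lemma essentially step for step: pass to the closure $\bar H$ of the component of $C\setminus C_0$ containing $v$, show it is convex with $\partial\bar H=C_0$ via the argument of Lemma 4.17 and Theorem 2.13, use Lemma 1.2 to get $|v\xi|=\frac{\pi}{2}$, double along the boundary and apply maximal-diameter rigidity to write $D(\bar H)=S(C_0')$, and then identify $C_0'$ with $C_0$ — the paper does this last identification by a separation argument where you use the doubling involution, but that is a detail within the same route. Your closing observation that the final clause (the only part invoked in the proof of Lemma 2.8) admits a direct proof by dimension counting in $(\Sigma_{v_0}C_0)^{\perp}$ via Lemma 4.7 and non-branching is a correct and more elementary shortcut not present in the paper.
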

\begin{proof}
Suppose that $C-C_0$ has two components $H_1,H_2$ and $v\in H_1$.
$\bar H_1$ is convex, as can be seen in the proof of Lemma 4.17. By
Theorem 2.13, $\partial \bar H_1=C_0$, and by Lemma 1.2,
$|vx|=\frac{\pi}{2}$, for any $x\in C_0$. Then $D(\bar
H_1)=S(C_0')$. By the structure of $D(\bar H_1)$, $C_0$ separates
$D(\bar H_1)$, and thus $C_0'=C_0$. For $x\in C$ with
$|xv|<\frac{\pi}{2}$, we have that $x\in H_1$. The result thus
follows.
\end{proof}

\begin{proof}[Proof of Theorem 4.6]
First by Lemma 4.7 and Remark 4.8, we get that $\dim(\Sigma_1)\leq
1$.
And by Sublemma 4.9, we have that $\Sigma_1$ is convex and isometric
to one of the following: $S^1(r)\ with\ r\leq 1$, $[ab]$,
$\{v_1,v_2\}\ with\ |v_1v_2|=\pi$, $\{v\}$.

We proceed the proof by induction. It is easy to see that for $n=2$
the theorem holds. Suppose $n-1$ the theorem holds.

In order to use induction, first we will prove the following:

\begin{sub lemma}
For every $ p\in \Sigma_0$ and every $w\in\Sigma_p\Sigma$, we have
that $|w\Sigma_p\Sigma_0|\leq\frac{\pi}{2}$ and
$|w(\Sigma_p\Sigma_0)^{\perp}|\leq\frac{\pi}{2}$.
\end{sub lemma}
\begin{proof}
Since $\Sigma_p\Sigma_0$ is convex without boundary in
$\Sigma_p\Sigma$, by Lemma 1.2 we get that
$|w\Sigma_p\Sigma_0|\leq\frac{\pi}{2}$. If $w\in \Sigma_p\Sigma$
such that $|w(\Sigma_p\Sigma_0)^{\perp}|>\frac{\pi}{2}$, then
$|w\Ua_p^{\Sigma_1}|>\frac{\pi}{2}$. Hence there exists $\uparrow
_p^q$ such that  $|\ua_p ^q\Ua_p^{\Sigma_1}|>\frac{\pi}{2}$, and
therefore $d_p\dist_{\Sigma_1}(\ua_p^q)>0$. Thus there exists $y\in
\Sigma$ such that $|y\Sigma_1|>\frac{\pi}{2}$, a contradiction to
the assumption of the theorem.
\end{proof}
We will prove the inductive steps according to the four situations
of $\Sigma_1$.\\

Case 1. Assume $\Sigma_1=S^1(r)$ with $r\leq 1$. By Lemma 4.15, we
can get that $\Sigma$ satisfies the conditions of Theorem 4.4. Then
the theorem
holds.\\

Case 2. Assume $\Sigma_1=\{v_1,v_2\}$ with $|v_1v_2|=\pi$. Then
$\Sigma=\Sigma_1*\Sigma_1^{\perp}$. Since $\Sigma$ is topologically
nice, $\Sigma_1^{\perp}$ is topologically nice and homeomorphic to a
sphere. Then by Lemma 4.17, $\Sigma_0$ is homeomorphic to $S^{n-2}$
and
topologically nice.\\

Case 3. Assume  $\Sigma_1=\{v\}$. Subcase 1. Assume  $\dim(\hat
\Sigma_1)=n$. For any $x\in\hat\Sigma_1$, on one hand, by the
assumptions of the theorem, $|xv|\leq \frac{\pi}{2}$, on the other
hand, by the definition of $\hat\Sigma_1$, $|xv|\geq\frac{\pi}{2}$.
Thus $|xv|=\frac{\pi}{2}$. By Lemma 4.7, we get a contradiction.

Subcase 2. Assume  $\dim(\hat \Sigma_1)=n-1$. Subsubcase 1. Assume
$\partial \hat \Sigma_1=\emptyset$. On one hand, by Lemma 4.17,
$\Sigma$ is homeomorphic to $S(\hat \Sigma_1)$, thus $\hat \Sigma_1$
separates $\Sigma$. On the other hand, for every $w\in \Sigma-(\hat
\Sigma_1\cup \Sigma_1)$, we have that $|w\hat
\Sigma_1|<\frac{\pi}{2}$ and $|w\Sigma_1|<\frac{\pi}{2}$, then by
triangle comparison, we get that $\dist_{\hat \Sigma_1}$ is
noncritical in $\Sigma-(\hat \Sigma_1\cup \Sigma_1)$, thus there is
a deformation retraction from $\Sigma-\hat \Sigma_1$ to $\{v\}$, a
contradiction.

Subsubcase 2. Assume $\partial \hat \Sigma_1\neq\emptyset$. We first
show that for any $x\in (\hat \Sigma_1)^{\circ}=(\hat
\Sigma_1-\partial \hat \Sigma_1)$, $\#\{\Ua_x^v\}=2$. By Lemma 4.17,
$\Sigma_x\Sigma\overset{homeo}{\simeq}S(\Sigma_x\hat\Sigma_1)$, and
by the proof of Lemma 4.19, we have that $\#\{\Ua_x^v\}\leq2$.
Suppose  $\#\{\Ua_x^v\}=1$, then by  Lemma 4.19, there is $w\in
\Sigma_x\Sigma$, such that $d_x\dist_v(w)>0$, a contradiction, as
can be seen in the proof of Sublemma 4.20.

By Lemma 4.16, we have that $\Sigma_0\cap\partial\hat
\Sigma_1\neq\emptyset$. If $\Sigma_0\cap\partial\hat \Sigma_1$ is
not equal to $\Sigma_0$, i.e., there is $z\in \Sigma_0\cap(\hat
\Sigma_1) ^{\circ}$, then by the above paragraph, we have that
$\#\{\Ua_z^v\}=2$. Thus for any $x\in\Sigma_0,\#\{\Ua_x^v\}=2$,
because by induction and the proof of Sublemma 4.11, we have that
for any $p,q\in \Sigma_0$, $\#\{\Ua_p^v\}=\#\{\Ua_q^v\}$. By the
proof of Sublemma 4.12, we can see that, $\psi:\Ua_v^{\hat
\Sigma_1^\circ}\to \hat \Sigma_1^\circ$, $\psi(\Ua_v^y)=y,y\in \hat
\Sigma_1^\circ$, is a locally isometric covering map. Since $\hat
\Sigma_1$ is contractible (the reason is that $\hat\Sigma_1\in
\Alex^{n-1}(1)$, with nonempty boundary, hence soul of  $\hat
\Sigma_1$ is a point (cf. \cite{[Per1]})),
$\Ua_v^{\hat\Sigma_1^\circ}\subset\Sigma_v\Sigma$ is two copies of
$\hat \Sigma_1^\circ$, by taking closure we can see that
$\Ua_v^{\Sigma_0}$ are two copies of $\Sigma_0$ with empty
intersection. This is impossible, since by applying the inductive
assumptions to $\Sigma_p\Sigma$ and $\Sigma_p\Sigma_0$, for $p\in
\Sigma_0$, we can see that $\Sigma_0$
is a topological manifold, and 
one component of $\Ua_v^{\Sigma_0}$ separates $\Sigma_v$, which is
homeomorphic to a sphere, into two components, with the closure of
each one convex. By Lemma 4.16, we can get a contradiction.

It follows that for any $p\in \Sigma_0$, $\#\{\Ua_p^v\}=1$. Hence
$\Sigma_0\subset
\partial\hat \Sigma_1$. We claim that $\partial\hat \Sigma_1 \overset
{homeo}{\simeq}S^{n-1}$. Therefore $\Sigma_0=\partial\hat \Sigma_1$.
It follows that $\Sigma$ contains two copies of $S^+(\hat
\Sigma_1)$, with each copy convex gluing along the boundary, which
is homeomorphic to $S^n$. Thus $\Sigma=[v\hat \Sigma_1]$, a double
of $v*\hat \Sigma_1$.

We now verify the claim: let $s\in \hat \Sigma_1$ be a point such
that $|s\partial \hat \Sigma_1|=\max\{\dist_{\partial\hat
\Sigma_1}\}$. By Lemma 4.17, $\Sigma_s\Sigma\overset
{homeo}{\simeq}S(\Sigma_s\hat \Sigma_1)$ and $\Sigma_s\hat
\Sigma_1\overset {homeo}{\simeq} S^{n-1}$. By 
Lemma 4.18, $(\hat\Sigma_1,\partial \hat \Sigma_1)\overset
{homeo}{\simeq}(\bar C(\Sigma_s\hat \Sigma_1),\Sigma_s\hat
\Sigma_1)$. Hence $\partial\hat \Sigma_1 \overset
{homeo}{\simeq}S^{n-1}$. 

Subcase 3. Assume  $\dim(\hat \Sigma_1)=n-2$. By Lemma 4.14, we get
that $\hat \Sigma_1= \Sigma_0$. Hence for every $w\in
\Sigma-(\Sigma_0\cup \Sigma_1)$,
$|w\Sigma_0|<\frac{\pi}{2},|w\Sigma_1|<\frac{\pi}{2}$. By triangle
comparison, we get that $\dist_{\Sigma_i}$ is noncritical in
$\Sigma-(\Sigma_0\cup \Sigma_1)$. Therefore there are deformation
retractions from $\Sigma-\Sigma_1$ to $\Sigma_0$, and
$\Sigma-\Sigma_0$ to $\Sigma_1$. Since for $p\in \Sigma_0$, by
induction, $\Sigma_p\Sigma_0$ is homeomorphic to a sphere, and thus
$\Sigma_0$ is a topological manifold. As in Theorem 4.4,
by using Alexander duality, we get a contradiction.\\

Case 4. Assume  $\Sigma_1=[ab]$. Subcase 1. Assume
$\dim(\hat\Sigma_1)=n$. Similarly as Subcase 1 of Case 3, we can get
a contradiction.

Subcase 2. Assume $\dim(\hat\Sigma_1)=n-1$. First by Lemma 4.7, we
have that $\partial \hat\Sigma_1\neq \emptyset$. By Remark 1.3, we
have that if there exist $x\in [ab]^{\circ}$ and $y\in
(\hat\Sigma_1)^{\circ}$, such that $|xy|=\frac{\pi}{2}$, then for
every $\bar x\in [ab]^{\circ}$ and every $\bar y\in \hat\Sigma_1,\
|\bar x\bar y|=\frac{\pi}{2}$. By Lemma 4.7, we get a contradiction.
Hence for every $x\in [ab]^{\circ}$ and every $y\in
(\hat\Sigma_1)^{\circ}$, $|xy|>\frac{\pi}{2}$. Since by Lemma 1.2,
we have that for every $x\in [ab]$ and for every $z\in \Sigma_0,\
|xz|=\frac{\pi}{2}$. It follows that $\Sigma_0\subset\partial
\hat\Sigma_1$.

Let $s\in \hat\Sigma_1$ be the point such that $|s\partial
\hat\Sigma_1|=\max\{\dist_{\partial \hat\Sigma_1}\}$. By Lemma 4.17,
$\Sigma_s\Sigma\overset {homeo}{\simeq}S(\Sigma_s\hat\Sigma_1)$, and
$\Sigma_s\hat\Sigma_1 \overset {homeo}{\simeq} S^{n-2}$. And by
Lemma 4.18, $(\hat\Sigma_1,\partial \hat\Sigma_1)\overset
{homeo}{\simeq}(\bar C(\Sigma_s\hat\Sigma_1),\Sigma_s\hat\Sigma_1)
$. Hence $\partial \hat\Sigma_1 \overset {homeo}{\simeq}S^{n-2}$. On
the other hand, by applying the inductive assumptions to
$\Sigma_p\Sigma$ and $\Sigma_p\Sigma_0$, for $p\in \Sigma_0$, we can
see that $\Sigma_0$ is a topological manifold. Hence
$\Sigma_0=\partial \hat\Sigma_1$.

Observe that for $z\in \Sigma_ 0$, by Lemma 1.2 and 2.6, we have
that $\Ua_z^{{\Sigma_1}^\circ}\overset{homeo}{\cong} (a,b)\times
\Ua_z^x$, where $x\in [ab]^{\circ}$. Since $\Ua_z^{\Sigma_1} \subset
(\Sigma_z\Sigma_0)^{\perp}$, whose dimension (by Lemma 4.7) $\leq
1$, we have that $\#\{[xz]\}$ is finite. By the proof of Sublemma
4.12, we can see that, $\psi_0:\Ua_x^{\Sigma_0}\to \Sigma_0$,
$\psi_0(\Ua_x^y)=y,y\in\Sigma_0$, is a locally isometric covering
map. Since $\Sigma_0\overset {homeo}{\simeq}S^{n-2}$,
$\Ua_x^{\Sigma_0}$ are several copies of $\Sigma_0$. On the other
hand,  suppose that $\Sigma_x\Sigma=S(\Sigma')$, then $\Sigma'$ is
homeomorphic to $S^{n-2}$ (by the topologically nice property of
$\Sigma$). Hence
$\Sigma'=\Ua_x^{\Sigma_0}\overset{isom}{\cong}\Sigma_0$, and
$\#\{[xz]\}=1$.
By now we get that $[\Sigma_0\Sigma_1]=\Sigma_0*\Sigma_1$, which can
be seen in the final part of the  proof of Theorem 4.4.

Finally, we need to show that for every $x\in
(\hat\Sigma_1)^{\circ}$, $|ax|=\frac{\pi}{2}$ and
$|bx|=\frac{\pi}{2}$. Since by the construction of $\hat \Sigma_1$
and by the condition of the theorem, $|x,[ab]|=\frac\pi2$, and we
have showed that $|x,[ab]^\circ|>\frac\pi2$, thus
$|ax|=\frac{\pi}{2}$ or $|bx|=\frac{\pi}{2}$, without loss of
generality, suppose that $|ax|=\frac\pi2$. We have that
$\#\{[ax]\}=1$. If not, as Subsubcase 2 in Case 3, we get that
$\Sigma=D(S^+(\hat\Sigma_1))$, a contradiction. Then by Lemma 4.17
and Lemma 4.19, $|\!\ua_x^a\ua_x^b\!|>\frac{\pi}{2}$. It follows
that $d_x\dist_{\Sigma_1}(\ua_x^b)>0$. This contradicts to the
condition of the theorem, as can be seen in the proof of Sublemma
4.20.

Thus $\Sigma$ contains $[\Sigma_0\Sigma_1]$ and two copies of
$S^+(\hat\Sigma_1)$, gluing along two copies of $S^+(\Sigma_0)$,
which is homeomorphic to
$S^n$. 
Hence $\Sigma=(a*\hat \Sigma_1\cup_{\hat\Sigma_1}
b*\hat\Sigma_1)\cup_\partial(\Sigma_1*\Sigma_0)$.

Subcase 3. Assume $\dim(\hat\Sigma_1)=n-2$. Similarly as the proof
of Subcase 3 of Case 3, we get a contradiction.
\end{proof}
Now we are ready to verify Proposition 2.1.
\begin{proof}[Proof of Proposition 2.1]
Observe that for any $p\in S$, we have that $\Ua_p^{\po}\subset
(\Sigma_pS)^{\perp}$, and by applying first variation formula to the
Busemann function, we get that for all $v\in\Sigma_pA,\
|v\Ua_p^{\po}|\leq\frac{\pi}{2}$. Hence
$|v(\Sigma_pS)^{\perp}|\leq\frac{\pi}{2}$. Thus for $p\in A$,
$\Sigma_pA$ satisfies the conditions of Theorem 4.6. Therefore
(2.1.1), (2.1.2) and (2.1.3) can all be derived from Theorem 4.6.

(2.1.4): As can be seen from the proof of Theorem 4.6 that for any
$x\in [ab]^\circ$, there is $y\in\hat\Sigma_1^p$ such that
$|xy|>\frac\pi2$. Then either $a$ or $b$ must be in $E$. If
$b\not\in E$, then also from the proof, there exists a point with
distance bigger than $\frac\pi2$ to $a$. The first statement thus
follows. The second statement is easy to be seen.
\end{proof}

\section*{Acknowledgement} First my sincere thanks will go to Prof.
Xiaochun Rong for many useful discussions during the preparation of
this work, especially for his vast time spent on the checking,
organization and presentation of this work. Then I will owe my
thanks to Prof. Jianguo Cao for his instructive work. I'm also in
great debt to Prof. Xuezhi Zhao for his unselfishness on topology
and many useful conversations. Furthermore, I would like to thank
Postdoc. Shicheng Xu for many constructive comments and
useful discussions. 
Finally I would like to thank Yusheng Wang, Xiaole
Su, Hongwei Sun and Chao Qian for their useful comments.\\

{\footnotesize Address: Department of Mathematics, Capital Normal
University, Beijing, China \newline\indent Email:
xuepingli525@gmail.com}


\end{document}